\documentclass[12pt]{article}
\usepackage[pdftex,bookmarksopen=true,bookmarks=true,unicode,setpagesize]{hyperref}
\hypersetup{colorlinks=true,linkcolor=black,citecolor=black}

\usepackage{hyperref}

\textwidth15.5cm
\textheight21cm
\oddsidemargin0cm
\evensidemargin0cm

\usepackage{amssymb, amsmath, amsthm}

\numberwithin{equation}{section}

\allowdisplaybreaks
\usepackage{cite}

\newcommand\R{\mathbb{R}}
\newcommand\N{\mathbb{N}}
\renewcommand\i{{\rm 1\kern -.3600em 1}}

\newtheorem{theorem}{Theorem}[section]
\newtheorem{corollary}[theorem]{Corollary}
\newtheorem{lemma}[theorem]{Lemma}
\newtheorem{proposition}[theorem]{Proposition}

\theoremstyle{remark}
\newtheorem{definition}[theorem]{Definition}
\theoremstyle{remark}
\newtheorem{example}[theorem]{Example}
\theoremstyle{remark}
\newtheorem{remark}[theorem]{Remark}

\newcommand{\vertiii}[1]{{\vert\kern-0.25ex\vert\kern-0.25ex\vert #1 
    \vert\kern-0.25ex\vert\kern-0.25ex\vert}}
\begin{document}

\vspace{-20mm}
\begin{center}{\Large \bf
Sheffer homeomorphisms of spaces of entire functions in infinite dimensional analysis}
\end{center}

{\large Dmitri Finkelshtein}\\ Department of Mathematics, Computational Foundry,  
Swansea University, Bay Campus, Swansea SA1 8EN, U.K.;\\
e-mail: \texttt{d.l.finkelshtein@swansea.ac.uk}\vspace{2mm}

{\large Yuri Kondratiev}\\ Fakult\"at f\"ur Mathematik, Universit\"at Bielefeld,
            33615~Bielefeld, Germany;\\
e-mail: \texttt{kondrat@mathematik.uni-bielefeld.de}\vspace{2mm}

{\large Eugene Lytvynov}\\ Department of Mathematics, Computational Foundry,  
Swansea University, Bay Campus, Swansea SA1 8EN, U.K.;\\
e-mail: \texttt{e.lytvynov@swansea.ac.uk}\vspace{2mm}

{\large Maria Jo\~{a}o Oliveira}\\ DCeT, Universidade Aberta,
            1269-001 Lisbon, Portugal; CMAFCIO, University of Lisbon, 1749-016 Lisbon, Portugal;\\
e-mail: \texttt{mjoliveira@ciencias.ulisboa.pt}\vspace{2mm}

{\large Ludwig Streit}\\ 
BiBoS, Universit\"at Bielefeld, Germany;\\ 
CIMA, Universidade da Madeira, Funchal, Portugal;\\
e-mail: \texttt{streit@uma.pt}

{\small
\begin{center}
{\bf Abstract}
\end{center}
\noindent
For certain Sheffer sequences $(s_n)_{n=0}^\infty$ on $\mathbb C$, Grabiner (1988) proved that, for each $\alpha\in[0,1]$, the corresponding Sheffer operator $z^n\mapsto s_n(z)$ extends to a linear self-homeomorphism of $\mathcal E^{\alpha}_{\mathrm{min}}(\mathbb C)$, the Fr\'echet topological space of entire functions of   order at most $\alpha$ and minimal type (when the order is equal to $\alpha>0$). In particular, every function $f\in \mathcal E^{\alpha}_{\mathrm{min}}(\mathbb C)$ admits a unique decomposition $f(z)=\sum_{n=0}^\infty c_n s_n(z)$, and the series converges in the topology of $\mathcal E^{\alpha}_{\mathrm{min}}(\mathbb C)$. Within the context of a complex nuclear space $\Phi$ and its dual space $\Phi'$, in this work we generalize  Grabiner's result to the case of Sheffer operators corresponding to Sheffer sequences on $\Phi'$. In particular, for $\Phi=\Phi'=\mathbb C^n$ with $n\ge2$, we obtain the multivariate  extension of Grabiner's theorem. Furthermore, for an Appell sequence on a general co-nuclear space $\Phi'$, we find a sufficient condition for the corresponding Sheffer operator to extend to a linear self-homeomorphism of $\mathcal E^{\alpha}_{\mathrm{min}}(\Phi')$ when $\alpha>1$. The latter result is new even in the one-dimensional case.
\noindent

 } \vspace{2mm}

{\bf Keywords:} Infinite dimensional holomorphy; nuclear and co-nuclear spaces; sequence of polynomials  of binomial type; Sheffer operator; Sheffer sequence; spaces of entire functions.
\vspace{2mm}

{\bf 2010 MSC. Primary:} 	46E50, 46G20, 32A10, 32A15,  05A40.  {\bf Secondary:}  60H40.  

\section{Introduction}

Let $(s_n)_{n=0}^\infty$ be a sequence of monic polynomials on $\mathbb C$.
(Although all polynomials sequences in this paper are assumed to be monic, this assumption is not essential for our purposes but  allows us to slightly simplify the notations.)
 Then  $(s_n)_{n=0}^\infty$ is called a {\it Sheffer sequence} if its (exponential) generating function has the form
\begin{equation}\label{xer5w5y43}
\sum_{n=0}^\infty \frac{u^n}{n!}\, s_n(z)=\frac{\exp[za(u)]}{r(a(u))}\,,\end{equation}
where $a(u)=\sum_{n=0}^\infty a_nu^n$ and $r(u)=\sum_{n=0}^\infty r_nu^n$ are formal power series in $u\in\mathbb C$ satisfying $a_0=0$, $a_1=1$ and $r_0=1$. In the special case  $r(u)\equiv 1$,  $(s_n)_{n=0}^\infty$ is called a {\it   sequence of polynomials of binomial type}, since it satisfies
$$s_n(z_1+z_2)=\sum_{k=0}^n\binom nk s_k(z_1)s_{n-k}(z_2).$$
In the special case  $a(u)\equiv u$, the Sheffer sequence $(s_n)_{n=0}^\infty$ is called an {\it Appell sequence}.
Modern {\it umbral calculus} (e.g.\ \cite{KRY,Roman}) studies Sheffer sequences and related operators. 

Let $\mathcal P(\mathbb C)$ denote the space of polynomials on $\mathbb C$. A  Sheffer sequence $(s_n)_{n=0}^\infty$ forms a basis in $\mathcal P(\mathbb C)$. Hence, we can define a linear operator $\mathfrak S$ acting on $\mathcal P(\mathbb C)$ by
$ \mathfrak S z^n=s_n(z)$.  The operator $\mathfrak S$ is called a {\it Sheffer operator}. In the special case where $(s_n)_{n=0}^\infty$ is a sequence of polynomials  of binomial type, the operator $\mathfrak S$ is called an {\it umbral operator}. The umbral, and more general Sheffer operators play a fundamental role in umbral calculus.

Grabiner  \cite{Grabiner} determined Banach and Fr\'echet spaces of entire functions between which Sheffer operators (in particular, umbral operators) can be extended as continuous linear operators, or even linear homeomorphisms. 

Let us now briefly recall one of the central results of \cite{Grabiner}. Recall that an entire function $f:\mathbb C\to\mathbb C$ is  of order at most $\alpha>0$ and minimal type (when the order is equal to $\alpha$) if $f$ satisfies the estimate  
$$\sup_{z\in\mathbb C}|f(z)|\exp(-t|z|^\alpha)<\infty\quad \text{for each }t>0.$$
We denote by $\mathcal E^{\alpha}_{\mathrm{min}}(\mathbb C)$ the linear space of such functions. This space is endowed with a natural Fr\'echet topology. Note that $\mathcal E^{\alpha_1}_{\mathrm{min}}(\mathbb C)\subset \mathcal E^{\alpha_2}_{\mathrm{min}}(\mathbb C)$ for $\alpha_1<\alpha_2$. We further denote $\mathcal E^{0}_{\mathrm{min}}(\mathbb C):=\bigcap_{\alpha>0}\mathcal E^{\alpha}_{\mathrm{min}}(\mathbb C)$, the Fr\'echet topological space of entire functions of order 0 (the lower index $\mathrm{min}$ in the notation $\mathcal E^{0}_{\mathrm{min}}(\mathbb C)$ being kept just for consistency with the case $\alpha>0$).
  Note that $\mathcal P(\mathbb C)$ is a dense subset of $\mathcal E^{\alpha}_{\mathrm{min}}(\mathbb C)$ for each $\alpha\ge0$. 
The following result is Theorem (3.13) and part of Theorem (6.6) in \cite{Grabiner}.

\begin{theorem}[\cite{Grabiner}]\label{xtew4u} Assume that functions $a(u)$ and $r(u)$ are holomorphic on a neighborhood of zero. Let a  Sheffer  sequence $(s_n)_{n=0}^\infty$ be defined by \eqref{xer5w5y43}. Then, for each $\alpha\in[0,1]$, the Sheffer operator $\mathfrak S$  corresponding  to $(s_n)_{n=0}^\infty$ extends  by continuity to a linear self-homeomorphism of the space $\mathcal E^{\alpha}_{\mathrm{min}}(\mathbb C)$. In particular, each function $f\in \mathcal E^{\alpha}_{\mathrm{min}}(\mathbb C)$ admits a unique representation
\begin{equation}\label{ftyqyrd5i}
f(z)=\sum_{n=0}^\infty c_n s_n(z),\quad c_n\in\mathbb C,\end{equation}
where the series on the right-hand side of formula \eqref{ftyqyrd5i} converges in the topology of $\mathcal E^{\alpha}_{\mathrm{min}}(\mathbb C)$.
\end{theorem}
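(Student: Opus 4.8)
The plan is to work entirely at the level of Taylor coefficients. Write $f(z)=\sum_{k\ge0}f_kz^k$; recall the classical description of the space, namely that (for $\alpha\in(0,1]$) $f\in\mathcal E^\alpha_{\mathrm{min}}(\mathbb C)$ if and only if $|f_k|\,(k!)^{1/\alpha}\varepsilon^{-k}$ is bounded in $k$ for every $\varepsilon>0$, and that the Fr\'echet topology of $\mathcal E^\alpha_{\mathrm{min}}(\mathbb C)$ is generated by the seminorms $\|f\|_\varepsilon:=\sup_{k\ge0}|f_k|\,(k!)^{1/\alpha}\varepsilon^{-k}$, $\varepsilon>0$ (a countable cofinal subfamily being obtained by taking $\varepsilon=1/m$); for $\alpha=0$ one uses the same formulas with $1/\alpha$ replaced by an arbitrary $\beta\ge 1$, which amounts to treating $\mathcal E^0_{\mathrm{min}}(\mathbb C)=\bigcap_{\alpha>0}\mathcal E^\alpha_{\mathrm{min}}(\mathbb C)$ with its projective limit topology. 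Since $\mathcal P(\mathbb C)$ is dense in $\mathcal E^\alpha_{\mathrm{min}}(\mathbb C)$ and the truncated Taylor sums of $f$ converge to $f$ in this topology, it suffices to show: for every $\varepsilon>0$ there are $\delta>0$ and $C>0$ with $\|\mathfrak S f\|_\varepsilon\le C\|f\|_\delta$ for all $f\in\mathcal P(\mathbb C)$. The extension by continuity of $\mathfrak S$ and the convergence of the series $\sum_n c_ns_n$ in \eqref{ftyqyrd5i} then follow automatically.

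First I would extract the coefficients of $s_n$. Writing $s_n(z)=\sum_{k=0}^n s_{n,k}z^k$ and using \eqref{xer5w5y43} together with Cauchy's formula in the variable $u$, one gets, for any sufficiently small $\rho>0$,
$$s_{n,k}=\frac{n!}{k!}\,\frac1{2\pi i}\oint_{|u|=\rho}\frac{a(u)^k}{r(a(u))}\,\frac{du}{u^{n+1}}\,.$$
Such a $\rho$ exists because, by the hypothesis, $a$ and $r$ are holomorphic near $0$ with $a(0)=0$, $r(0)=1$, so on a small circle $|u|=\rho$ the function $u\mapsto a(u)^k/r(a(u))$ is well defined and holomorphic. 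Since $a(u)=u\,(1+O(u))$, on $|u|=\rho$ we have $|a(u)|\le M\rho$ and $|1/r(a(u))|\le M'$ for constants $M,M'$ independent of $n,k$, so the integral estimate yields $|s_{n,k}|\le M'\,\dfrac{n!}{k!}\,K^{\,n}$ for $0\le k\le n$ (and of course $s_{n,k}=0$ for $k>n$), where $K:=\max\{M,\rho^{-1}\}$.

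Next I would run the seminorm estimate. Fix $\varepsilon>0$ and choose $\delta>0$ with $\delta K<1$ and $\delta K\le\varepsilon$. The coefficient of $z^k$ in $\mathfrak S f=\sum_n f_n s_n$ is $\sum_{n\ge k}f_ns_{n,k}$, and for $f\in\mathcal P(\mathbb C)$ with $|f_n|\le\|f\|_\delta\,\delta^n(n!)^{-1/\alpha}$ one has
$$\Bigl|\sum_{n\ge k}f_ns_{n,k}\Bigr|\le\frac{M'}{k!}\,\|f\|_\delta\sum_{n\ge k}(n!)^{1-1/\alpha}(\delta K)^n\le\frac{M'}{1-\delta K}\,\|f\|_\delta\,(k!)^{-1/\alpha}(\delta K)^k\,,$$
where the decisive use of $\alpha\le1$ is that $1-1/\alpha\le0$, so $(n!)^{1-1/\alpha}$ is non-increasing in $n$ and may be pulled out of the sum at $n=k$. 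Multiplying by $(k!)^{1/\alpha}\varepsilon^{-k}$ and taking $\sup_k$ (using $\delta K\le\varepsilon$) gives $\|\mathfrak S f\|_\varepsilon\le\frac{M'}{1-\delta K}\|f\|_\delta$. Hence $\mathfrak S$ extends to a continuous linear operator $\widehat{\mathfrak S}$ on $\mathcal E^\alpha_{\mathrm{min}}(\mathbb C)$ with $\widehat{\mathfrak S}f=\sum_nf_ns_n$ convergent in the topology of $\mathcal E^\alpha_{\mathrm{min}}(\mathbb C)$; the case $\alpha=0$ is identical with $1/\alpha$ replaced by $\beta\ge1$. This two-sided coefficient estimate is the heart of the matter, and it is precisely here that $\alpha\le1$ is indispensable: for $\alpha>1$ the series $\sum_n(n!)^{1-1/\alpha}(\delta K)^n$ diverges, which is why the range $\alpha>1$ (treated later in the paper) genuinely needs an extra assumption.

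Finally, for the homeomorphism property I would identify the inverse operator. Using the factorization $\mathfrak S=r(\partial)^{-1}\mathfrak S_0$, where $\mathfrak S_0$ is the umbral operator of the binomial sequence with generating function $\exp[za(u)]$, a short computation on generating functions shows that $\mathfrak S^{-1}$ is again a Sheffer operator, corresponding to the Sheffer sequence with generating function $r(u)\exp\bigl[z\,a^{-1}(u)\bigr]$, i.e.\ to the data $\tilde a:=a^{-1}$ (the compositional inverse, holomorphic near $0$ by Lagrange inversion since $a_1=1$, with $\tilde a_0=0$, $\tilde a_1=1$) and $\tilde r:=1/(r\circ a)$ (holomorphic near $0$ with $\tilde r_0=1$). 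Thus the hypotheses of the theorem also hold for $\mathfrak S^{-1}$, so by the previous steps $\mathfrak S^{-1}$ extends to a continuous linear operator $\widehat{\mathfrak S^{-1}}$ on $\mathcal E^\alpha_{\mathrm{min}}(\mathbb C)$. Since $\mathfrak S\,\mathfrak S^{-1}=\mathfrak S^{-1}\mathfrak S=\mathrm{id}$ on the dense subspace $\mathcal P(\mathbb C)$, the two extensions are mutually inverse on all of $\mathcal E^\alpha_{\mathrm{min}}(\mathbb C)$; hence $\widehat{\mathfrak S}$ is a linear self-homeomorphism. Uniqueness of the representation \eqref{ftyqyrd5i} follows by applying $\widehat{\mathfrak S}^{-1}$ to both sides, which recovers the Taylor coefficients $c_n$ from $\widehat{\mathfrak S}^{-1}f$.
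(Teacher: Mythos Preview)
Your proof is correct and hinges on the same decisive inequality as the paper's---the monotonicity of $(n!)^{1-1/\alpha}$ for $\alpha\le 1$---but the execution differs in several respects worth noting. The paper does not prove this one-dimensional statement directly; it cites Grabiner and instead proves the infinite-dimensional generalisation Theorem~\ref{sew56uw}, of which the present statement is the special case $\Phi=\Phi'=\mathbb C$. That proof is structured differently from yours. First, the paper works with the \emph{summation} norms $\|f\|_{l,\alpha}=\sum_n(n!)^{1/\alpha}2^{ln}|f_n|$ (Theorem~\ref{tctfdr6tqde6} shows these generate the same topology as the defining norms $\mathbf n_{l,\alpha}$), whereas you use the equivalent sup norms $\sup_k|f_k|(k!)^{1/\alpha}\varepsilon^{-k}$. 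Second, the paper factors $\mathfrak S=\widetilde{\mathfrak S}\,\mathfrak U$ and handles the umbral part (Steps~1--2) and the Appell-like part $\widetilde{\mathfrak S}:P^{(n)}\mapsto S^{(n)}$ (Steps~3--4) separately; each step proceeds by expanding the exponential generating function combinatorially (cf.\ \eqref{ywdex}) and bounding via $|a_k|\le C^k$. Your route---a single Cauchy estimate $|s_{n,k}|\le M'\tfrac{n!}{k!}K^n$ from the contour integral, followed by one sup-norm inequality---is shorter and more transparent in one variable, but the Cauchy-integral step has no obvious analogue on a co-nuclear space, which is why the paper takes the more laborious combinatorial path. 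Your treatment of the inverse, recognising $\mathfrak S^{-1}$ as the Sheffer operator attached to $(\tilde a,\tilde r)=(a^{-1},\,1/(r\circ a))$ and re-applying the forward estimate, is equivalent in effect to the paper's Steps~2 and~4, where the compositional inverse $B$ and the kernels $\varkappa^{(k)}$ of $\rho(A(\xi))$ are invoked directly.
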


Many extensions of umbral calculus to the case of polynomials of several, or even infinitely many variables were discussed e.g.\ in \cite{BBN,Brown,DBLR,Parrish,Reiner,Roman_multivariate,Ueno}, for a longer list of such papers see the introduction to \cite{DBLR}. However, in the multivariate case, no analog of Grabiner's results was proved for  Sheffer sequences.

In infinite dimensional (stochastic) analysis, one often meets examples of  sequences of polynomials defined on a co-nuclear space. More precisely, one considers 
 a Gel'fand triple $\Phi\subset \mathcal H_0\subset \Phi'$, where $\Phi$ is a nuclear space that is topologically (i.e., densely and continuously) embedded into a Hilbert space $\mathcal H_0$, and $\Phi'$ is the dual of the space $\Phi$ (i.e., a co-nuclear space) and the dual pairing between elements of  $\Phi'$ and $\Phi$ is given by a continuous extension of the scalar product in $\mathcal H_0$. Usually, $\mathcal H_0=L^2(\R^d,dx)$ with $d\in\mathbb N$, while $\Phi$ is either $\mathcal S$, the Schwartz space of smooth rapidly decreasing functions on $\R^d$, or $\mathcal D$, the space of smooth functions on $\R^d$ with compact support. To study stochastic analysis related to a  probability measure on $\Phi'$ (called a {\it generalized stochastic process\/}), one uses sequences of polynomials on $\Phi'$.

In paper \cite{FKLO}, the definition of a Sheffer  sequence on $\Phi'$ was given and general properties of Sheffer sequences were studied. This research was motivated by numerous available examples of such sequences of polynomials: Hermite polynomials in Gaussian white noise analysis (e.g.\ \cite{BK,HKPS,HOUZ}), Charlier polynomials in analysis related to the Poisson point process (e.g.\ \cite{IK,Kondratievetal,KKO}), Laguerre polynomials in analysis related to the gamma random measure  
(e.g.\  \cite{KL,Kondratievetal}), Meixner polynomials in analysis related to the Meixner white noise measure (e.g.\ \cite{L1,L2}), falling factorials on $\Phi'$ appearing in the theory of point processes (e.g.\ \cite{BKKL,KK,KKO2}), and a class of sequences of polynomials on $\Phi'$ with a generating function of a certain exponential type 
appearing  in biorthogonal analysis related to a rather general probability measures on $\Phi'$ (e.g.\ \cite{ADKS,KSWY}). 

We assume that $\Phi'$ is a complex co-nuclear space, in which case $\Phi'=\bigcup_{\tau\in T}\mathcal H_{-\tau}$,  a union of complex Hilbert spaces $\mathcal H_{-\tau}$. By analogy with \cite[Chapter 2, Section 5.4]{BK} or \cite[Section 2.2]{KSWY}, one can naturally define $\mathcal E_{\mathrm{min}}^\alpha(\Phi')$, the space of entire functions on $\Phi'$ of order at most $\alpha\ge0$ and minimal type (when the order is equal to $\alpha>0$).

It should be noted that,
when the nuclear space $\Phi$ is countably-Hilbert (i.e., the set $T$ is countable),  functions from $\mathcal E_{\mathrm{min}}^\alpha(\Phi')$ are  entire on the space $\Phi'$ equipped with the inductive limit topology of the $\mathcal H_{-\tau}$ spaces, see Corollary~\ref{rtew45uwe} below. If, however, $T$ is not a countable set, functions from  $ \mathcal E_{\mathrm{min}}^\alpha(\Phi')$ are entire on each Hilbert space $\mathcal H_{-\tau}$ but it is not known whether they are continuous (hence entire) with respect to the inductive limit topology on  $\Phi'$.

One of the central results of Gaussian white noise analysis is an internal description of the Hida Test Space  \cite[Chapter 2, Section 5.4]{BK}, see also \cite[Section 3]{Lee} and \cite{KPS,MY}. By analogy with Theorem \ref{xtew4u}, this result can now be stated as follows: {\it The Sheffer operator corresponding to the  sequence of Hermite polynomials on $\Phi'$  extends by continuity to a linear self-homeomorphism of the space $\mathcal E^{2}_{\mathrm{min}}(\Phi')$.} (Note that Grabiner's paper \cite{Grabiner} does not deal with spaces $\mathcal E^{\alpha}_{\mathrm{min}}(\mathbb C)$ where $\alpha>1$.)

Furthermore, in paper \cite{KSWY} (see also\cite{KSW}), for a class of Sheffer polynomials on $\Phi'$ (which includes the Hermite and Charlier polynomials), a similar  result was proved for a smaller space of test functions  (which is nowadays referred to as the   Kondratiev Test Space, cf.\ \cite{Kondrat}). Again by analogy with Theorem \ref{xtew4u}, this result can now be stated as follows: {\it For each Sheffer sequence considered in \cite{KSWY}, the corresponding Sheffer operator  extends by continuity to a linear self-homeomorphism of the space $\mathcal E^{1}_{\mathrm{min}}(\Phi')$.} 

The  main result of the present paper (Theorem \ref{sew56uw}) is a direct extension of Theorem \ref{xtew4u}: {\it If the infinite dimensional analogs of the functions $a(u)$ and $r(u)$ satisfy an assumption generalizing that of Theorem \ref{xtew4u}, then for each $\alpha\in[0,1]$,
the corresponding Sheffer operator  extends by continuity to a linear self-homeomorphism of the space $\mathcal E^{\alpha}_{\mathrm{min}}(\Phi')$.}
In the special case $\Phi=\mathcal H_0=\Phi'=\mathbb C$, we recover Theorem \ref{xtew4u}. Furthermore, by choosing $\Phi=\mathcal H_0=\Phi'=\mathbb C^n$ with $n\ge2$, we obtain the {\it (finite-dimensional) multivariate  extension of 
Theorem \ref{xtew4u}}.

Using the ideas from  the proof of Theorem \ref{sew56uw}, we also obtain the following result (Theorem~\ref{ft7r8o5}) regarding Appell sequences on $\Phi'$: {\it Let $\alpha>1$ and assume that the infinite dimensional analog of the function $r(u)$ satisfies a certain assumption, which depends on $\alpha$. Then the  corresponding Sheffer operator  extends by continuity to a linear self-homeomorphism of $\mathcal E^{\alpha}_{\mathrm{min}}(\Phi')$.} This result contains the internal description of the Hida Test Space as a special case. It should be noted that this result is new even in the one-dimensional case $\Phi'=\mathbb C$.

The paper is organized as follows. In Section \ref{vrtew6u3}, we collect 
the required preliminaries of complex analysis on nuclear and co-nuclear spaces. The proofs of several new propositions appearing in this section are given in Appendix. In Subsection~\ref{zszawaz}, we recall the definitions  of nuclear and co-nuclear spaces and the construction of a Gel'fand triple. In Subsection~\ref{tss5waq53q},  we define and discuss holomorphic functions  on nuclear and co-nuclear spaces. 
 Here our standard reference for complex analysis on locally convex topological spaces  is  Dineen's book \cite{Dineen2}. 
In Subsection~\ref{cdrt6ue46}, we discuss the spaces $\mathcal E^{\alpha}_{\mathrm{min}}(\Phi')$. For each $\alpha>0$, we obtain a new useful representation of $\mathcal E^{\alpha}_{\mathrm{min}}(\Phi')$ as the projective limit of certain Banach spaces. This result develops further the theory from \cite[Chapter 2, Section 5.4]{BK} and \cite[Section 2.2]{KSWY}. 

In Section~\ref{ye6i4}, we discuss  the umbral and Sheffer operators. 
First, in Subsection~\ref{rtew4w3xx}, we  recall the required definitions and results from \cite{FKLO},  regarding Sheffer sequences on $\Phi'$. 
Next, in Subsection~\ref{qwrsdreddw}, we formulate and prove the main results of the paper.  

Finally, in Section \ref{a43wq7}, we discuss some examples of Sheffer sequences on $\Phi'$.  We show, in particular, that every Sheffer sequence on $\mathbb C$ satisfying the assumptions of Theorem~\ref{xtew4u} can be lifted  to a Sheffer sequence  on $\Phi'=\mathcal S'$ or $\mathcal D'$ that satisfies the assumption of our main result, Theorem \ref{sew56uw}. 
 
\section{Complex analysis on nuclear and co-nuclear spaces}\label{vrtew6u3}

\subsection{Preliminaries on complex nuclear and co-nuclear spaces}\label{zszawaz}

Let us first recall the definition of a nuclear space, for details see e.g.\ \cite[Chapter 14, Section~2.2]{BSU2}. Consider a family of real separable Hilbert spaces $(\mathcal H_{\tau,\R})_{\tau\in T}$, where $T$ is an arbitrary index set. Assume that,  for any $\tau_1,\tau_2\in T$, there exists a $\tau_3\in T$ such that $\mathcal H_{\tau_3,\R}\subset \mathcal H_{\tau_1,\R}$ and $\mathcal H_{\tau_3,\R}\subset \mathcal H_{\tau_2,\R}$ and both embeddings are continuous. Further assume that, for each $\tau_1\in T$, there exists a $\tau_2\in T$ such that $\mathcal H_{\tau_2,\R}\subset \mathcal H_{\tau_1,\R}$, and the embedding operator  is of  Hilbert--Schmidt class. Consider the set $\Phi_{\R}:=\bigcap_{\tau\in T}\mathcal H_{\tau,\R}$ and assume that $\Phi_{\R}$ is dense in each Hilbert space $\mathcal H_{\tau,\R}$.  We introduce in $\Phi_\R$ the projective limit topology of the $\mathcal H_{\tau,\R}$ spaces. Then the linear topological space $\Phi_\R$ is called {\it nuclear}.

Let us assume that, for some $\tau_0\in T$, each Hilbert space $\mathcal H_{\tau,\R}$ with $\tau\in T$ is continuously embedded into $\mathcal H_{0,\R}:=\mathcal H_{\tau_0,\R}$. We will call $\mathcal H_{0,\R}$ the {\it center space}.
 Let $\Phi'_\R$ denote the dual space of $\Phi_\R$ with respect to the center space  $\mathcal H_{0,\R}$, i.e., the dual pairing between $\Phi_\R'$ and $\Phi_\R$ is obtained by continuously extending the inner product on  $\mathcal H_{0,\R}$,
see  e.g.\ \cite[Chapter 14, Section~2.3]{BSU2} for details. The space $\Phi'_\R$ is often called {\it co-nuclear}. We will use the notation $(\omega,\xi)_{\mathcal H_{0,\R}}$ for the dual pairing between $\omega\in\Phi'_{\R}$ and $\xi\in\Phi_\R$.

By the Schwartz theorem (e.g.\ \cite[Chapter 14, Theorem~2.1]{BSU2}), $\Phi'_\R=\bigcup_{\tau\in T}\mathcal H_{-\tau,\R}$, where $\mathcal H_{-\tau,\R}$ denotes the dual space of $\mathcal H_{\tau,\R}$ with respect to the center space $\mathcal H_{0,\R}$. We endow $\Phi'_\R$ with the  topology of the  inductive limit of the $\mathcal H_{-\tau,\R}$ spaces. (Note that this is the locally convex inductive limit, i.e., the inductive limit is taken in the category of locally convex spaces and continuous linear maps.)
Thus, we obtain   the real {\it Gel'fand triple} 
$$
\Phi_\R=\operatornamewithlimits{proj\,lim}_{\tau\in T} \mathcal H_{\tau,\R}\subset\mathcal  H_{0,\R}\subset \operatornamewithlimits{ind\,lim}_{\tau\in T}\mathcal H_{-\tau,\R}=\Phi'_\R.$$

The above construction can now be  extended to the complex case. For each $\tau\in T$, we define the Hilbert space $\mathcal H_\tau$ as the complexification of $\mathcal H_{\tau,\R}$.   
(We assume that the inner product on $\mathcal H_\tau$ is linear in the first variable and antilinear in the second variable.) As a result, we obtain a complex nuclear space $\Phi$ that is the complexification of $\Phi_{\R}$.
Similarly, we define complex Hilbert spaces $\mathcal H_{-\tau}$ and their inductive limit $\Phi'$.
Each $\omega\in\Phi'$ determines a linear continuous functional on $\Phi$ by the formula
$\langle\omega,\xi\rangle:=(\omega,\overline\xi)_{\mathcal H_0}$,
where $\overline\xi$ denotes the complex conjugate of $\xi$.
Hence, $\Phi'$ is the dual of the complex nuclear space $\Phi$ (i.e., a complex co-nuclear space). Thus, we get the complex Gel'fand triple 
\begin{equation}\label{fd6re7itfrde}
\Phi=\operatornamewithlimits{proj\,lim}_{\tau\in T}\mathcal H_\tau\subset\mathcal H_0\subset  \operatornamewithlimits{ind\,lim}_{\tau\in T}\mathcal H_{-\tau}=\Phi'.\end{equation}

\begin{remark}
As the dual of the locally convex topological vector space $\Phi$, the space $\Phi'$ can be endowed with several standard topologies, in particular, the weak topology~$\sigma(\Phi',\Phi)$ (e.g.\ \cite[Chapter~II, Subsection~5.2]{SW}), 
the Mackey topology~$\tau(\Phi',\Phi)$ (e.g.\ \cite[Chapter~IV, Subsection~2.2]{SW}), and 
the strong topology~$\beta(\Phi',\Phi$) (e.g.\ \cite[Chapter~IV, Section~5]{SW}). In fact, the inductive limit topology on $\Phi'$, that we use in this paper, coincides with the Mackey topology~$\tau(\Phi',\Phi)$,  see e.g.\ \cite[Chapter IV, Subsection 4.4]{SW}. If the set $T$ is countable (i.e., $\Phi$ is a countably-Hilbert nuclear space), then the inductive limit/Mackey topology also coincides with the strong topology~$\beta(\Phi',\Phi$), see e.g.\ \cite[Theorem 4.16]{Benchel}.
Furthermore, in the latter case,  $\Phi'$ is a nuclear space itself, see  e.g.\ \cite[Chapter~IV, Subsection~9.6]{SW}.
\end{remark}

Below, we will denote by $\otimes$ the {\it tensor product} and by $\odot$ the {\it symmetric tensor product}. Let $n\in\mathbb N$.  Starting with Gel'fand triple \eqref{fd6re7itfrde}, one constructs its $n$th symmetric tensor power as follows:
$$\Phi^{\odot n}:=\operatornamewithlimits{proj\,lim}_{\tau\in T}\mathcal H_\tau^{\odot n}\subset \mathcal H_0^{\odot n}\subset \operatornamewithlimits{ind\,lim}_{\tau\in T}\mathcal H_{-\tau}^{\odot n}=:\Phi'{}^{\odot n},$$
see e.g.\ \cite[Section~2.1]{BK} for details. In particular, $\Phi^{\odot n}$ is a complex nuclear space and $\Phi'{}^{\odot n}$ is its dual.
We  will also define $\Phi^{\odot 0}=\mathcal H_0^{\odot 0}=\Phi'{}^{\odot 0}:=\mathbb C$.  The norm in each Hilbert space $\mathcal H_\tau^{\odot n}$ ($n\in\mathbb N$) will be denoted by $\|\cdot\|_\tau$. Similarly, we will use the notation $\|\cdot\|_{-\tau}$ for the norms in $\mathcal H_{-\tau}^{\odot n}$.
 
\subsection{Holomorphic mappings on complex nuclear and co-nuclear spaces}
\label{tss5waq53q}

Let us first recall some definitions and statements related to holomorphic mappings between locally convex spaces \cite{Dineen2}.
Let $E$ and $F$ be complex locally convex topological vector spaces. Let $U$ be an open subset of $E$. A mapping $f:U\to F$ is called {\it G\^ateaux holomorphic} if, for any $\xi\in U$, $\eta\in E$ and  $\Psi\in F'$, the $\mathbb C$-valued function 
$$z\mapsto \langle \Psi, f(\xi+z\eta)\rangle\in\mathbb C$$
is holomorphic on some neighborhood of zero in $\mathbb C$. If additionally the mapping\linebreak[4]  $f:U\to F$ is continuous, then  $f$ is called  {\it holomorphic}.

We note that, if $F=\mathbb C$, the above definition of a G\^ateaux holomorphic function
$f:U\to\mathbb C$ means that, for any $\xi\in U$ and $\eta\in E$, the function $z\mapsto f(\xi+z\eta)\in\mathbb C$ is holomorphic on some neighborhood of zero in $\mathbb C$.


A function $f:E\to\mathbb C$ that is holomorphic on a whole locally convex space $E$ is called {\it entire}.

Below we will use the spaces $\Phi$ and $\Phi'$ from the Gel'fand triple \eqref{fd6re7itfrde}. 
The space $\Phi'$, equipped with the inductive limit topology, is a locally convex space. By using the results of \cite[Section 3.1]{Dineen2},  it is not difficult to show that   each entire function $f:\Phi'\to\mathbb C$ has the property that, for each $\tau\in T$, the restriction $f\restriction_{ \mathcal H_{-\tau}}$ is an entire function on $\mathcal H_{-\tau}$. However, the converse statement is, generally speaking, not true.  We will say that a function $f:\Phi'\to\mathbb C$ is {\it restricted-entire on $\Phi'$} if its restriction  is entire on $\mathcal H_{-\tau}$ for each $\tau \in T$. 
(See Corollary~\ref{rtew45uwe} below which shows that, in the case of a countably-Hilbert nuclear space $\Phi$, the  functions from the spaces appearing in this paper are actually entire on $\Phi'$.)

For the proofs of the following two propositions, see Appendix.

\begin{proposition}\label{vytde6e6}
 Let  $f:\Phi'\to\mathbb C$ be  a restricted-entire function on $\Phi'$. Then there exist kernels $\varphi^{(n)}\in\Phi^{\odot n}$ such that, for all $\omega\in\Phi'$,
\begin{equation}\label{t7red6q5}
f(\omega)=\sum_{n=0}^\infty \langle\omega^{\otimes n},\varphi^{(n)}\rangle.
\end{equation}
Here $\omega^{\otimes0}:=1$.
\end{proposition}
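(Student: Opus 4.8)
## Proof proposal for Proposition \ref{vytde6e6}

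The plan is to reduce the statement to the known structure theory of entire functions on a single complex Hilbert space, and then to use the compatibility of these representations across the directed family $(\mathcal H_{-\tau})_{\tau\in T}$ together with the Schwartz-type duality $\mathcal H_{-\tau}^{\odot n}{}'=\mathcal H_\tau^{\odot n}$ to land the kernels in $\Phi^{\odot n}=\bigcap_\tau\mathcal H_\tau^{\odot n}$.

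\emph{Step 1: Taylor expansion on each Hilbert space.} Fix $\tau\in T$. By hypothesis $f\restriction_{\mathcal H_{-\tau}}$ is entire on the complex Hilbert space $\mathcal H_{-\tau}$, so by the classical theory of holomorphic functions on locally convex spaces (Dineen \cite{Dineen2}, Section 3.1) it has a Taylor expansion at the origin, $f(\omega)=\sum_{n=0}^\infty P_n^\tau(\omega)$ for $\omega\in\mathcal H_{-\tau}$, where each $P_n^\tau$ is a continuous $n$-homogeneous polynomial on $\mathcal H_{-\tau}$ and the series converges uniformly on bounded subsets of $\mathcal H_{-\tau}$. Each $P_n^\tau$ is represented by a unique symmetric continuous $n$-linear form, equivalently by a unique kernel $\varphi^{(n)}_\tau\in(\mathcal H_{-\tau}^{\odot n})'=\mathcal H_\tau^{\odot n}$, so that $P_n^\tau(\omega)=\langle\omega^{\otimes n},\varphi^{(n)}_\tau\rangle$. (At $n=0$ this is just $P_0^\tau=f(0)$.)

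\emph{Step 2: Consistency of the kernels.} If $\tau'$ is such that $\mathcal H_{-\tau'}\subset\mathcal H_{-\tau}$ continuously (equivalently $\mathcal H_\tau\subset\mathcal H_{\tau'}$), then $f\restriction_{\mathcal H_{-\tau'}}$ is the restriction of $f\restriction_{\mathcal H_{-\tau}}$, and by uniqueness of the Taylor coefficients the polynomial $P_n^{\tau'}$ is the restriction of $P_n^{\tau}$; hence $\varphi^{(n)}_{\tau'}$ is the image of $\varphi^{(n)}_\tau$ under the inclusion $\mathcal H_\tau^{\odot n}\hookrightarrow\mathcal H_{\tau'}^{\odot n}$. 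Since $T$ is directed downward, all the $\varphi^{(n)}_\tau$ are identified with a single element, which I will call $\varphi^{(n)}$, lying in $\bigcap_{\tau\in T}\mathcal H_\tau^{\odot n}=\Phi^{\odot n}$. This is the step where the directedness of $T$ and the projective-limit definition of $\Phi^{\odot n}$ are essential; I expect it to be the main point requiring care, because one must check that the identifications coming from different pairs $\tau,\tau'$ are mutually compatible, which follows from the uniqueness of the $n$-homogeneous Taylor polynomial on any intermediate Hilbert space.

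\emph{Step 3: Assembling the global series.} For arbitrary $\omega\in\Phi'=\bigcup_\tau\mathcal H_{-\tau}$ pick $\tau$ with $\omega\in\mathcal H_{-\tau}$; then $f(\omega)=\sum_{n=0}^\infty\langle\omega^{\otimes n},\varphi^{(n)}_\tau\rangle=\sum_{n=0}^\infty\langle\omega^{\otimes n},\varphi^{(n)}\rangle$ by Steps 1--2, and this does not depend on the chosen $\tau$. This is exactly \eqref{t7red6q5}. The hard part, as noted, is Step 2; Steps 1 and 3 are essentially bookkeeping on top of the single-Hilbert-space Taylor theory.
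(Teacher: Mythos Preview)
Your overall strategy is correct, but there is a genuine gap in Step~1. You write that each continuous $n$-homogeneous polynomial $P_n^\tau$ on $\mathcal H_{-\tau}$ is ``equivalently'' represented by a kernel $\varphi^{(n)}_\tau\in(\mathcal H_{-\tau}^{\odot n})'=\mathcal H_\tau^{\odot n}$. The identification $(\mathcal H_{-\tau}^{\odot n})'=\mathcal H_\tau^{\odot n}$ is fine, but the passage from ``continuous symmetric $n$-linear form on $\mathcal H_{-\tau}^{\,n}$'' to ``element of $(\mathcal H_{-\tau}^{\odot n})'$'' is not: the symbol $\odot$ here denotes the symmetric \emph{Hilbert} tensor product, and continuous $n$-linear forms correspond to continuous linear functionals on the \emph{projective} tensor product, which is strictly smaller. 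Concretely, already for $n=2$ the bilinear form $\psi(\omega_1,\omega_2)=\langle\omega_1,\omega_2\rangle$ on an infinite-dimensional $\mathcal H_{-\tau}$ is bounded, yet the formal kernel $\sum_k e_k\otimes e_k$ does not belong to $\mathcal H_\tau^{\odot 2}$. So the Taylor coefficients of an entire function on a single Hilbert space need not be Hilbert--Schmidt type kernels.

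The paper's proof repairs exactly this point using the nuclearity of $\Phi$: given $\tau$, one first chooses $\tau'$ with $\mathcal H_{\tau'}\hookrightarrow\mathcal H_\tau$ Hilbert--Schmidt, expands $f$ on the larger space $\mathcal H_{-\tau'}$ to obtain bounded symmetric $n$-linear forms $\psi^{(n)}$ on $\mathcal H_{-\tau'}^{\,n}$, and then invokes the Kernel Theorem (\cite[Chapter~1, Theorem~2.3]{BK}) to conclude that the \emph{restriction} of $\psi^{(n)}$ to $\mathcal H_{-\tau}^{\,n}$ is represented by a genuine kernel $\varphi^{(n)}\in\mathcal H_\tau^{\odot n}$. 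The Hilbert--Schmidt embedding is precisely what converts a merely bounded multilinear form into one with a Hilbert-tensor kernel. Once this is in place, your Steps~2 and~3 go through essentially as written (and the paper's argument that $\tau$ was arbitrary, hence $\varphi^{(n)}\in\bigcap_\tau\mathcal H_\tau^{\odot n}=\Phi^{\odot n}$, is the same consistency observation you make).
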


Let $E,F$ be complex locally convex topological vector spaces. We denote by $\mathcal L(E,F)$ the space of continuous linear operators acting
from $E$ into $F$. We will also denote $\mathcal L(E):=\mathcal L(E,E)$.

\begin{proposition}\label{6ei650}  Let $\mathcal G$ be a separable complex Hilbert space. Let $U$ be an open neighborhood of zero in $\Phi$, and let 
$F:U\to\mathcal G$ be holomorphic. Then there exist  $U'$, an open neighborhood of zero in $\Phi$ that is a subset of $U$,  $\tau \in T$, and operators $A_n\in \mathcal L(\mathcal H_\tau^{\odot n},\mathcal G)$ ($n\in\mathbb N$) such that, for all $\xi\in U'$,
$ F(\xi)=F(0)+\sum_{n=1}^\infty A_n\xi^{\otimes n}$, 
 where the series 
 converges in $\mathcal G$. Furthermore, for some $C_1\ge0$, we have 
$ \|A_n\|_{ \mathcal L(\mathcal H_\tau^{\odot n},\mathcal G)}\le C_1^n$ for all $n\in\mathbb N$.
\end{proposition}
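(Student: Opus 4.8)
The plan is to reduce the statement to the scalar case by pairing with elements of $\mathcal G$, and then to use the fact that a G\^ateaux-holomorphic function on a locally convex space that is bounded on a neighborhood of zero admits a Taylor expansion into continuous homogeneous polynomials, with Cauchy-type estimates on their norms. First I would recall from \cite[Section 3.1]{Dineen2} that, since $F$ is holomorphic (hence continuous) at $0$, there is a continuous seminorm on $\Phi$, and therefore some $\tau_1\in T$ and $\delta>0$, such that $F$ is bounded on the ball $U_1:=\{\xi\in\Phi:\|\xi\|_{\tau_1}<\delta\}\subset U$; say $\sup_{\xi\in U_1}\|F(\xi)\|_{\mathcal G}=:M<\infty$. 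For each fixed $g\in\mathcal G$ the scalar function $\xi\mapsto (F(\xi),g)_{\mathcal G}$ is G\^ateaux holomorphic on $U$ and bounded by $M\|g\|_{\mathcal G}$ on $U_1$, so it has a Taylor expansion $(F(\xi),g)_{\mathcal G}=\sum_{n=0}^\infty P_{n,g}(\xi)$ with each $P_{n,g}$ an $n$-homogeneous polynomial continuous with respect to $\|\cdot\|_{\tau_1}$, and by the Cauchy inequalities $\sup_{\|\xi\|_{\tau_1}<\delta}|P_{n,g}(\xi)|\le M\|g\|_{\mathcal G}$, hence $|P_{n,g}(\xi)|\le M\|g\|_{\mathcal G}\,\delta^{-n}\|\xi\|_{\tau_1}^n$.

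Next I would polarize: each $P_{n,g}$ corresponds, via the polarization formula, to a symmetric $n$-linear form, and therefore to a bounded linear functional on the Hilbert space $\mathcal H_{\tau_1}^{\odot n}$ applied to $\xi^{\otimes n}$; write $P_{n,g}(\xi)=\langle \Lambda_{n,g},\xi^{\otimes n}\rangle$ with the standard polarization estimate $\|\Lambda_{n,g}\|\le \frac{n^n}{n!}\,M\|g\|_{\mathcal G}\delta^{-n}\le M\|g\|_{\mathcal G}(e/\delta)^n$. Since $g\mapsto P_{n,g}(\xi)$ is linear and continuous in $g$, the map $g\mapsto\Lambda_{n,g}$ is a bounded conjugate-linear (or linear, after the usual identification) map $\mathcal G\to(\mathcal H_{\tau_1}^{\odot n})'$, whose adjoint is precisely an operator $A_n\in\mathcal L(\mathcal H_{\tau_1}^{\odot n},\mathcal G)$ characterized by $(A_n\xi^{\otimes n},g)_{\mathcal G}=P_{n,g}(\xi)$ for all $g$; moreover $\|A_n\|_{\mathcal L(\mathcal H_{\tau_1}^{\odot n},\mathcal G)}=\sup_{\|g\|\le1}\|\Lambda_{n,g}\|\le M(e/\delta)^n$. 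Summing over $g$ in a countable dense set (using separability of $\mathcal G$) and reassembling, one gets $F(\xi)=\sum_{n=0}^\infty A_n\xi^{\otimes n}$ weakly, hence in norm since the series of norms converges absolutely for $\|\xi\|_{\tau_1}<\delta/(2e)$. Finally I would absorb the constant $M$ and the factor $e/\delta$ by passing from $\tau_1$ to a larger index $\tau\in T$ with $\mathcal H_\tau\hookrightarrow\mathcal H_{\tau_1}$ Hilbert--Schmidt (so $\|\xi\|_{\tau_1}\le c\|\xi\|_\tau$), rescaling, and shrinking $U_1$ to the required $U':=\{\|\xi\|_\tau<\delta'\}$; choosing $\tau$ so that $c(e/\delta)\le C_1$ after one more enlargement gives $\|A_n\|_{\mathcal L(\mathcal H_\tau^{\odot n},\mathcal G)}\le C_1^n$.

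The routine parts are the polarization estimates and the bookkeeping with the constants; the one point deserving care — and the main obstacle — is verifying that the pointwise-in-$g$ bounds assemble into a genuine operator $A_n$ with the claimed operator-norm bound, i.e.\ that the conjugate-linear dependence on $g$ is bounded uniformly in $n$-dependent fashion and that the adjoint lands in $\mathcal L(\mathcal H_\tau^{\odot n},\mathcal G)$ rather than merely a weak-$*$ object. This is handled by the uniform bound $\|\Lambda_{n,g}\|\le M\|g\|_{\mathcal G}(e/\delta)^n$ together with the Riesz representation in $\mathcal H_{\tau_1}^{\odot n}$ and the fact that a bounded family of functionals indexed linearly by $g\in\mathcal G$ defines a bounded operator by the uniform boundedness principle; the norm-convergence of the series then follows from the geometric estimate $\sum_n C_1^n\|\xi\|_\tau^n<\infty$ on a sufficiently small neighborhood $U'$.
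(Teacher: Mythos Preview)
Your approach is essentially the paper's: pair with $g\in\mathcal G$, apply Cauchy estimates to the resulting scalar holomorphic functions, polarize, and assemble the operators $A_n$. However, there is a genuine gap at the polarization step. You write that the symmetric $n$-linear form obtained from $P_{n,g}$ gives ``a bounded linear functional on the Hilbert space $\mathcal H_{\tau_1}^{\odot n}$'' with norm at most $M\|g\|_{\mathcal G}(e/\delta)^n$. This is false in general: a bounded symmetric $n$-linear form on a Hilbert space $H$ is a bounded functional on the \emph{projective} tensor power $H^{\hat\otimes_\pi n}$, not on the Hilbert tensor power $H^{\odot n}$. Already for $n=2$, the inner product $(x,y)\mapsto (x,y)_H$ is a bounded bilinear form, yet the induced functional on $H\otimes H$ sends $\sum_{i=1}^N e_i\otimes e_i$ (which has norm $\sqrt N$) to $N$, so it is unbounded. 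Hence your $\Lambda_{n,g}$ need not lie in $(\mathcal H_{\tau_1}^{\odot n})'$, and the ``adjoint'' $A_n$ need not belong to $\mathcal L(\mathcal H_{\tau_1}^{\odot n},\mathcal G)$.

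The paper closes exactly this gap with the Kernel Theorem \cite[Chapter~1, Theorem~2.3]{BK}: from the bounded $(n{+}1)$-linear form on $\mathcal H_{\tau'}^n\times\mathcal G$ one obtains an element $\Theta^{(n)}\in\mathcal H_{-\tau}^{\odot n}\otimes\mathcal G$, but only after passing to $\tau\in T$ with $\mathcal H_\tau\hookrightarrow\mathcal H_{\tau'}$ of Hilbert--Schmidt class; the operator is then $A_n\varphi^{(n)}:=\langle\Theta^{(n)},\varphi^{(n)}\rangle$. The Hilbert--Schmidt embedding is therefore not a cosmetic device for ``absorbing the constant $M$'' as you suggest --- it is what makes $A_n$ exist as a bounded map on the Hilbert tensor power at all. (Incidentally, no absorbing is needed for $M$: for $n\ge1$ one has $M(e/\delta)^n\le(\max(M,1)\,e/\delta)^n$, which is already of the form $C_1^n$.) Once you replace your Riesz-representation step by the Kernel Theorem with the Hilbert--Schmidt embedding, your argument becomes the paper's.
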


The following two corollaries are now immediate.

\begin{corollary}\label{e6r656e3}
 Let  $U$ be an open neighborhood of zero in $\Phi$, and let  $f:U\to\mathbb C$ be holomorphic. Then there exist  $U'$, an 
open neighborhood of zero in $\Phi$ that is a subset of $U$,  $\tau \in T$, and $\rho^{(n)}\in\mathcal H_{-\tau}^{\odot n}$ ($n\in\mathbb N$) such that, for all $ \xi\in U'$,
$f(\xi)=f(0)+\sum_{n=1}^\infty \langle \rho^{(n)},\xi^{\otimes n} \rangle$.  
Furthermore,  for some  $C_2\ge0$, we have 
  $\|\rho^{(n)}\|_{-\tau}\le C_2^n$ for all $n\in\mathbb N$.
\end{corollary}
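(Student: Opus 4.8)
The plan is to obtain this statement directly from Proposition~\ref{6ei650} by specializing to the scalar case $\mathcal G=\mathbb C$. First I would apply Proposition~\ref{6ei650} with $\mathcal G=\mathbb C$ to the holomorphic function $f:U\to\mathbb C$. This produces an open neighborhood of zero $U'\subset U$ in $\Phi$, an index $\tau\in T$, operators $A_n\in\mathcal L(\mathcal H_\tau^{\odot n},\mathbb C)$ for $n\in\mathbb N$, and a constant $C_1\ge0$, such that $f(\xi)=f(0)+\sum_{n=1}^\infty A_n\xi^{\otimes n}$ for all $\xi\in U'$, the series converging in $\mathbb C$, and $\|A_n\|_{\mathcal L(\mathcal H_\tau^{\odot n},\mathbb C)}\le C_1^n$ for all $n$.

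Next I would use the fact that, by the construction of the Gel'fand triple and its symmetric tensor powers (via the Schwartz theorem), the Hilbert space $\mathcal H_{-\tau}^{\odot n}$ is the dual of $\mathcal H_\tau^{\odot n}$ with respect to the center space $\mathcal H_0^{\odot n}$, the associated dual pairing $\langle\cdot,\cdot\rangle$ being bilinear; consequently the assignment $\rho\mapsto\langle\rho,\cdot\rangle$ is an isometric linear isomorphism from $\mathcal H_{-\tau}^{\odot n}$ onto $\mathcal L(\mathcal H_\tau^{\odot n},\mathbb C)$. Hence for each $n\in\mathbb N$ there is a unique $\rho^{(n)}\in\mathcal H_{-\tau}^{\odot n}$ with $A_n\eta=\langle\rho^{(n)},\eta\rangle$ for all $\eta\in\mathcal H_\tau^{\odot n}$, and $\|\rho^{(n)}\|_{-\tau}=\|A_n\|_{\mathcal L(\mathcal H_\tau^{\odot n},\mathbb C)}\le C_1^n$.

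Finally, since $\xi\in\Phi\subset\mathcal H_\tau$ gives $\xi^{\otimes n}\in\mathcal H_\tau^{\odot n}$, taking $\eta=\xi^{\otimes n}$ yields $A_n\xi^{\otimes n}=\langle\rho^{(n)},\xi^{\otimes n}\rangle$, so that $f(\xi)=f(0)+\sum_{n=1}^\infty\langle\rho^{(n)},\xi^{\otimes n}\rangle$ for all $\xi\in U'$, with the series converging in $\mathbb C$; putting $C_2:=C_1$ then completes the proof. I do not expect a genuine obstacle here: the only points needing a moment's care are that the correspondence $A_n\leftrightarrow\rho^{(n)}$ is linear and norm-preserving (which rests on the pairing $\langle\cdot,\cdot\rangle$ being bilinear rather than sesquilinear), and that a single $\tau$ together with a single $U'$ serve for all $n$ simultaneously, which is already part of the conclusion of Proposition~\ref{6ei650}.
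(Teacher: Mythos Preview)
Your proposal is correct and follows exactly the approach the paper intends: the corollary is stated as ``immediate'' from Proposition~\ref{6ei650}, and your argument simply spells out the specialization $\mathcal G=\mathbb C$ together with the identification of each $A_n\in\mathcal L(\mathcal H_\tau^{\odot n},\mathbb C)$ with an element $\rho^{(n)}\in\mathcal H_{-\tau}^{\odot n}$ via the (bilinear) dual pairing. There is nothing to add or correct.
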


\begin{corollary}\label{tuqdftqq}
Let $U$ be an open neighborhood of zero in $\Phi$ and  let $F:U\to\Phi$ be holomorphic. Then there exist operators $A_n\in\mathcal L (\Phi^{\odot n},\Phi)$ ($n\in\mathbb N$) 
for which the following statements hold:

{\rm (a)} For each $\tau \in T$, there exists  $U_{\tau}$, an 
open neighborhood of zero in $\Phi$, that is a subset of $U$, such that, for all $\xi\in U_\tau$, 
$F(\xi)=F(0)+ \sum_{n=1}^\infty A_n\xi^{\otimes n}$,
where the series converges in the $\mathcal H_{\tau}$ space.

{\rm (b)} For each $\tau\in T$, there exist $\tau'\in T$ and $C_3\ge0$ such that, 
for each  $n\in\mathbb N$, 
$A_n$ extends  to a bounded  linear operator $A^\tau_n\in\mathcal L(\mathcal H_{\tau'}^{\odot n},\mathcal H_{\tau})$ with
$\|A^\tau_n\|_{\mathcal L(\mathcal H_{\tau'}^{\odot n},\mathcal H_{\tau})}\le C_{3}^n$.
\end{corollary}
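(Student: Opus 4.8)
The plan is to reduce everything to Proposition~\ref{6ei650} by composing $F$ with the canonical projections $\pi_\tau\colon\Phi\to\mathcal H_\tau$ of the projective limit $\Phi=\operatornamewithlimits{proj\,lim}_{\tau\in T}\mathcal H_\tau$, and then to glue the resulting monomial coefficients into operators taking values in $\Phi$ itself.

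First, fix $\tau\in T$. Since $\pi_\tau$ is linear and continuous, $\pi_\tau\circ F\colon U\to\mathcal H_\tau$ is holomorphic, so Proposition~\ref{6ei650} (applied with $\mathcal G=\mathcal H_\tau$) yields an open neighborhood $U_\tau\subset U$ of zero in $\Phi$, an index $\sigma(\tau)\in T$, a constant $C_1(\tau)\ge0$, and operators $B^{(\tau)}_n\in\mathcal L(\mathcal H_{\sigma(\tau)}^{\odot n},\mathcal H_\tau)$ with $\|B^{(\tau)}_n\|_{\mathcal L(\mathcal H_{\sigma(\tau)}^{\odot n},\mathcal H_\tau)}\le C_1(\tau)^n$ such that, for all $\xi\in U_\tau$,
$$\pi_\tau\big(F(\xi)\big)=\pi_\tau\big(F(0)\big)+\sum_{n=1}^\infty B^{(\tau)}_n\xi^{\otimes n},$$
the series converging in $\mathcal H_\tau$.

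The key point is the mutual consistency of the families $(B^{(\tau)}_n)_{\tau\in T}$. Given $\tau_1,\tau_2\in T$, by the projective-limit axioms there is $\tau_3\in T$ with continuous embeddings $\mathcal H_{\tau_3}\hookrightarrow\mathcal H_{\tau_1}$ and $\mathcal H_{\tau_3}\hookrightarrow\mathcal H_{\tau_2}$; for $\xi\in\Phi$ in a sufficiently small neighborhood of zero, $\pi_{\tau_1}(F(\xi))$ is the image of $\pi_{\tau_3}(F(\xi))$ under $\mathcal H_{\tau_3}\hookrightarrow\mathcal H_{\tau_1}$, and similarly for $\tau_2$. Comparing the three monomial expansions and invoking the uniqueness of the homogeneous Taylor components of a holomorphic function (see \cite[Section~3.1]{Dineen2}), we conclude that, for every $\xi\in\Phi$ and every $n$, $B^{(\tau_1)}_n\xi^{\otimes n}$ and $B^{(\tau_2)}_n\xi^{\otimes n}$ are the images of one and the same element of $\mathcal H_{\tau_3}$; since diagonal tensors span a dense subspace of $\Phi^{\odot n}$ and the $B^{(\tau)}_n$ are continuous, the same compatibility holds on all of $\Phi^{\odot n}$. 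Hence, for each $\varphi\in\Phi^{\odot n}$, the thread $(B^{(\tau)}_n\varphi)_{\tau\in T}$ is compatible and defines an element $A_n\varphi\in\operatornamewithlimits{proj\,lim}_{\tau\in T}\mathcal H_\tau=\Phi$. The resulting map $A_n\colon\Phi^{\odot n}\to\Phi$ is linear, and it is continuous because $\pi_\tau\circ A_n$ is the restriction of $B^{(\tau)}_n$ to $\Phi^{\odot n}$ (continuous) and $\Phi$ carries the projective-limit topology; thus $A_n\in\mathcal L(\Phi^{\odot n},\Phi)$.

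With the $A_n$ in hand, part (a) is immediate for the fixed $\tau$: on $U_\tau$ one has $\pi_\tau(A_n\xi^{\otimes n})=B^{(\tau)}_n\xi^{\otimes n}$, so the stated series converges in $\mathcal H_\tau$ to $F(\xi)$. Part (b) follows by taking $\tau'=\sigma(\tau)$ and $C_3=C_1(\tau)$: the operator $A^\tau_n:=B^{(\tau)}_n\in\mathcal L(\mathcal H_{\tau'}^{\odot n},\mathcal H_\tau)$ extends $A_n$ in the sense that its restriction to $\Phi^{\odot n}$ equals $\pi_\tau\circ A_n$, and it satisfies $\|A^\tau_n\|_{\mathcal L(\mathcal H_{\tau'}^{\odot n},\mathcal H_\tau)}\le C_3^n$. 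The only real work is the gluing step of the third paragraph — i.e., making precise that the coefficients produced by separate applications of Proposition~\ref{6ei650} to the different projections patch into $\Phi$-valued operators; this rests entirely on the uniqueness of the monomial (Taylor) expansion and on the description $\Phi=\operatornamewithlimits{proj\,lim}_{\tau\in T}\mathcal H_\tau$. A minor technical point worth flagging is that Proposition~\ref{6ei650} gives each expansion only on a $\tau$-dependent neighborhood, which is exactly why the neighborhood in (a) must be allowed to depend on $\tau$, whereas the operators $A_n$ are $\tau$-independent.
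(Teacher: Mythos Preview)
Your proof is correct and follows exactly the route the paper intends: the paper simply declares this corollary ``immediate'' from Proposition~\ref{6ei650}, and what you have written is precisely the spelling-out of that immediacy---apply Proposition~\ref{6ei650} with $\mathcal G=\mathcal H_\tau$ for each $\tau$, then use uniqueness of the Taylor coefficients and the projective-limit description of $\Phi$ to glue the resulting operators into $A_n\in\mathcal L(\Phi^{\odot n},\Phi)$. The consistency/gluing argument you give is the right way to make this rigorous, and your remark about the $\tau$-dependence of $U_\tau$ is also on point.
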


\begin{remark}
Below we will drop the upper index $\tau$ in the notation $A_n^\tau$, hence we will use the same symbol $A_n$ for all extensions of the operator $A_n\in\mathcal L (\Phi^{\odot n},\Phi)$ to an operator from
 $\mathcal L(\mathcal H_{\tau'}^{\odot n},\mathcal H_{\tau})$. 
\end{remark}

Let us now fix arbitrary operators $A_n\in\mathcal L(\Phi^{\odot n},\Phi)$ ($n\in\mathbb N$) and consider the formal power series $F:\Phi\to\Phi$ given by
\begin{equation}\label{ewq432q4}
F(\xi)=F(0)+ \sum_{n=1}^\infty A_n\xi^{\otimes n},
\end{equation}
where $F(0)$ is a fixed element of $\Phi$. Note that, for a fixed $\xi\in\Phi$,   we get
$F(z\xi)=F(0)+\sum_{n=1}^\infty z^n A_n\xi^{\otimes n}$, 
which is a formal series in powers of $z\in\mathbb C$ with coefficients from $\Phi$. See \cite{FKLO} for further details on such formal power series.

Assume that the operators $A_n$ satisfy the following condition:
\begin{itemize}
\item[(QH)] For each 
$\tau\in T$, there exist $\tau'\in T$ and $C_{4}\ge0$ such that
$A_n\in\mathcal L(\mathcal H_{\tau'}^{\odot n},\mathcal H_\tau)$ and 
$\|A_n\|_{\mathcal L(\mathcal H_{\tau'}^{\odot n},\mathcal H_\tau)}\le C_{4}^n$ for all $n\in\mathbb N$, . 
\end{itemize}

\begin{definition} \label{64834w7}
We will say that a formal power series $F:\Phi\to\Phi$ given by \eqref{ewq432q4} is {\it quasi-holomorphic on a neighborhood of zero} if the operators $A_n\in\mathcal L(\Phi^{\odot n},\Phi)$ satisfy condition (QH).
\end{definition}

By Corollary \ref{tuqdftqq}, if $U$ is an open neighborhood of zero in $\Phi$ and a function\linebreak[4] $F:U\to\Phi$ is holomorphic, then it is also quasi-holomorphic.  Note, however, that, for a  sequence of operators $A_n$ satisfying the condition (QH), it may happen  that the $F(\xi)$ given by the convergent series \eqref{ewq432q4} belongs to $\Phi$ only for $\xi=0$.

\subsection{Spaces $\mathcal E^{\alpha}_{\mathrm{min}}(\Phi')$}\label{cdrt6ue46}

Let $\alpha>0$. For  $\tau\in T$ and  $l\in\N_0:=\{0,1,2,\dots\}$, we denote by $\mathcal E^\alpha_{l}(\mathcal H_{-\tau})$ the vector space of all entire functions $f:\mathcal H_{-\tau}\to\mathbb C$ that satisfy
\begin{equation}\label{d6e7i4}
\mathbf n_{\tau,l,\alpha}(f):=\sup_{\omega\in\mathcal H_{-\tau}}|f(\omega)|\exp(-2^{-l}\|\omega\|_{-\tau}^\alpha)<\infty.\end{equation}
The function $\mathbf n_{\tau,l,\alpha}(\cdot)$ determines a norm on $\mathcal E^\alpha_{l}(\mathcal H_{-\tau})$.  

We now define $\mathcal E^{\alpha}_{\mathrm{min}}(\Phi')$ as the set of all functions $f:\Phi'\to\mathbb C$ such that the res\-triction of $f$ to each Hilbert space $\mathcal H_{-\tau}$ ($\tau\in T$) belongs to $\mathcal E^\alpha_{l}(\mathcal H_{-\tau})$ for all $l\in\mathbb N_0$. We call $\mathcal E^{\alpha}_{\mathrm{min}}(\Phi')$ the {\it space of 
(restricted-)entire functions on $\Phi'$ of order at most $\alpha$ and minimal type (when the order is equal to $\alpha$)}.  We endow $\mathcal E^{\alpha}_{\mathrm{min}}(\Phi')$ with the topology of the projective limit: 
$$\mathcal E^{\alpha}_{\mathrm{min}}(\Phi'):= \operatornamewithlimits{proj\,lim}_{\tau\in T,\,l\in\N_0}\mathcal E^\alpha_{l}(\mathcal H_{-\tau}),$$
compare with \cite{BK,KSW,KSWY}.  For any $0<\alpha<\alpha'$, we obviously have
$\mathcal E^{\alpha}_{\mathrm{min}}(\Phi')\subset \mathcal E^{\alpha'}_{\mathrm{min}}(\Phi')$,
and the embedding is continuous. So, we also define
\begin{equation}\label{tye64w3w}
\mathcal E^{0}_{\mathrm{min}}(\Phi'):= \operatornamewithlimits{proj\,lim}_{\alpha>0}\mathcal E^{\alpha}_{\mathrm{min}}(\Phi').\end{equation}

We will now present an alternative description  of $\mathcal E^{\alpha}_{\mathrm{min}}(\Phi')$. Recall that, by Proposition~\ref{vytde6e6}, each function $f\in \mathcal E^{\alpha}_{\mathrm{min}}(\Phi')$ with $\alpha\ge0$ has a representation \eqref{t7red6q5} with $\varphi^{(n)}\in\Phi^{\odot n}$. Let $\tau\in T$, $l\in\N_0$, and $\alpha>0$. Let $\mathbf E^\alpha_{l}(\mathcal H_{-\tau})$ denote the vector space of all entire functions $f:\mathcal H_{-\tau}\to\mathbb C$ of the form  \eqref{t7red6q5} with $\varphi^{(n)}\in\mathcal H_\tau^{\odot n}$ that satisfy
\begin{equation}\label{yufwrq76rf}
\|f\|_{\tau,l,\alpha}:=\sum_{n=0}^\infty (n!)^{1/\alpha}\, 2^{ln}\|\varphi^{(n)}\|_\tau<\infty. \end{equation}
The function $\|\cdot\|_{\tau,l,\alpha}$ determines a norm on  $\mathbf E^\alpha_{l}(\mathcal H_{-\tau})$, which makes the latter a Banach space.

For the proof of the following theorem, see Appendix. 

\begin{theorem}\label{tctfdr6tqde6}
 For each $\alpha>0$, the following equality of  topological spaces holds:
$$ \mathcal E^{\alpha}_{\mathrm{min}}(\Phi')= 
\operatornamewithlimits{proj\,lim}_{\tau\in T,\,l\in\N_0}\mathbf E^\alpha_{l}(\mathcal H_{-\tau}).$$
\end{theorem}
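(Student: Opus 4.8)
The plan is to show the two projective limits coincide by establishing the equality of the index sets' worth of seminorm systems, i.e. proving that for each fixed $\alpha>0$, $\tau\in T$ and $l\in\N_0$ one can dominate the norm $\mathbf n_{\tau,l,\alpha}$ by some $\|\cdot\|_{\tau',l',\alpha}$ and, conversely, $\|\cdot\|_{\tau,l,\alpha}$ by some $\mathbf n_{\tau',l',\alpha}$. Concretely, I would first fix $f$ of the form \eqref{t7red6q5} with kernels $\varphi^{(n)}$ and work on a single Hilbert space $\mathcal H_{-\tau}$. The elementary inequality $|\langle \omega^{\otimes n},\varphi^{(n)}\rangle|\le \|\omega\|_{-\tau}^n\,\|\varphi^{(n)}\|_\tau$ together with the scalar bound $\sup_{r\ge 0} r^n e^{-2^{-l}r^\alpha}=\bigl(\tfrac{n\alpha^{-1}2^l}{e}\bigr)^{n/\alpha}$ and Stirling's formula $n^{n/\alpha}\asymp (n!)^{1/\alpha}e^{n/\alpha}$ shows that $|f(\omega)|e^{-2^{-l}\|\omega\|_{-\tau}^\alpha}$ is controlled, up to a geometric factor $K^n$ depending only on $\alpha,l$, by $\sum_n (n!)^{1/\alpha}2^{ln}\|\varphi^{(n)}\|_\tau$. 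Absorbing $K^n$ into $2^{l'n}$ for $l'$ large enough gives $\mathbf n_{\tau,l,\alpha}(f)\le \|f\|_{\tau,l',\alpha}$; this is the easy direction and simultaneously shows $\mathbf E^\alpha_{l'}(\mathcal H_{-\tau})\hookrightarrow \mathcal E^\alpha_l(\mathcal H_{-\tau})$ continuously.

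The substantive direction is recovering a bound on the kernels $\varphi^{(n)}$ from the sup-norm of $f$. Here I would use a Cauchy-type estimate: the kernel $\varphi^{(n)}\in\Phi^{\odot n}$ is the $n$-th Taylor coefficient of the entire function $f$ at the origin, so for any $\omega\in\mathcal H_{-\tau}$, polarization plus the one-dimensional Cauchy inequality applied to $z\mapsto f(z\omega)$ on the circle $|z|=R$ yields $|\langle\omega^{\otimes n},\varphi^{(n)}\rangle|\le R^{-n}\sup_{|z|=R}|f(z\omega)|$. Optimizing $R$ as a function of $\|\omega\|_{-\tau}$ against the growth estimate $|f(z\omega)|\le \mathbf n_{\tau,l,\alpha}(f)\,e^{2^{-l}|z|^\alpha\|\omega\|_{-\tau}^\alpha}$ produces $|\langle\omega^{\otimes n},\varphi^{(n)}\rangle|\le C^n (n!)^{-1/\alpha}\|\omega\|_{-\tau}^n\,\mathbf n_{\tau,l,\alpha}(f)$ for a constant $C=C(\alpha,l)$. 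The point where the nuclear structure of $\Phi$ must be used — and the main obstacle — is passing from this pointwise (in $\omega$) bound on the $n$-linear form to a bound on $\|\varphi^{(n)}\|_{\tau'}$ in a \emph{smaller} Hilbert norm: one needs to choose $\tau'\in T$ such that the embedding $\mathcal H_{\tau'}^{\odot n}\hookrightarrow \mathcal H_\tau^{\odot n}$ is trace class (equivalently, the $n$-th tensor power of a Hilbert--Schmidt embedding), so that a form bounded on the $\mathcal H_{-\tau}$-ball extends to a Hilbert--Schmidt, hence $\mathcal H_{\tau'}^{\odot n}$-bounded, functional with $\|\varphi^{(n)}\|_{\tau'}\le (\text{HS-constant})^n\cdot \sup_{\|\omega\|_{-\tau}\le1}|\langle\omega^{\otimes n},\varphi^{(n)}\rangle|$; the HS-constant to the power $n$ is again geometric and gets absorbed. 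I expect this extension/trace-class step, and the bookkeeping of how the geometric constants combine across the three steps (Stirling, Cauchy optimization, HS extension), to be the only delicate parts; everything else is routine.

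Finally I would assemble the two continuous inclusions into the identification of projective limits: for each $(\tau,l)$ there are $(\tau',l')$ with $\mathbf E^\alpha_{l'}(\mathcal H_{-\tau'})\hookrightarrow\mathcal E^\alpha_l(\mathcal H_{-\tau})$ and vice versa, so the two families of Banach spaces are cofinal in each other and the projective limits are equal as topological vector spaces. I would also remark that the representation \eqref{t7red6q5} with $\varphi^{(n)}\in\Phi^{\odot n}$ (not merely in $\mathcal H_\tau^{\odot n}$) is guaranteed by Proposition \ref{vytde6e6}, which is what makes the identification legitimate at the level of $\mathcal E^\alpha_{\mathrm{min}}(\Phi')$ rather than only at the level of the individual Hilbert spaces, and note the case $\alpha\ge0$ boundary conventions require no separate argument since $\alpha>0$ is assumed here.
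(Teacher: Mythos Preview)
Your overall strategy matches the paper's proof almost exactly: two lemmas giving mutual domination of the norm families, the easy direction by an elementary pointwise estimate, the hard direction by a one-variable Cauchy estimate on $z\mapsto f(z\omega)$ followed by the Kernel Theorem using the nuclear structure. The paper's easy direction uses H\"older's inequality rather than your term-by-term supremum plus Stirling, but the two computations are interchangeable.

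There is, however, a real slip in your hard direction: you have the Kernel Theorem running the wrong way. A bounded $n$-linear form on the unit ball of $\mathcal H_{-\tau}$ need not lie in the Hilbert tensor product $\mathcal H_\tau^{\odot n}$, and it certainly cannot be pushed into the \emph{stronger} space $\mathcal H_{\tau'}^{\odot n}$ when $\mathcal H_{\tau'}\hookrightarrow\mathcal H_\tau$ is Hilbert--Schmidt. The Kernel Theorem only lets you \emph{weaken} the norm: a form bounded on $\mathcal H_{-\tau'}$ (the \emph{larger} dual space), together with $\mathcal H_{-\tau}\hookrightarrow\mathcal H_{-\tau'}$ Hilbert--Schmidt, yields a kernel in $\mathcal H_\tau^{\odot n}$ with norm controlled by $c_{\tau,\tau'}^{\,n}$ times the form bound. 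So the correct bookkeeping --- and this is what the paper does --- is to run the Cauchy estimate on $\mathcal H_{-\tau'}$ using $\mathbf n_{\tau',l',\alpha}(f)$, obtain the form bound there, and then descend to $\|\varphi^{(n)}\|_\tau$; this gives $\|f\|_{\tau,l,\alpha}\le C\,\mathbf n_{\tau',l',\alpha}(f)$ directly. Your version, once the direction is fixed, becomes the equivalent dual statement (for each $(\tau,l)$ one finds a weaker $(\tau'',l'')$ with $\mathcal E^\alpha_l(\mathcal H_{-\tau})\subset\mathbf E^\alpha_{l''}(\mathcal H_{-\tau''})$), which also suffices for cofinality, but as written the claim ``form bounded on the $\mathcal H_{-\tau}$-ball $\Rightarrow$ $\|\varphi^{(n)}\|_{\tau'}\le(\text{HS-const})^n\cdot(\text{form bound})$ with $\mathcal H_{\tau'}\hookrightarrow\mathcal H_\tau$'' is false.
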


\begin{remark}
The reader is advised to compare Theorem~\ref{tctfdr6tqde6} with \cite[Theorem 2.5]{KSWY}, which deals with the case $\alpha\in[1,2]$ and a  different collection of norms as compared with $\|\cdot\|_{\tau,l,\alpha}$.
\end{remark}

The following corollary is immediate.

\begin{corollary}\label{trsw5y647ew} Let $\alpha\in[0,\infty)$ and let  $f\in  \mathcal E^{\alpha}_{\mathrm{min}}(\Phi')$. Represent $f$ in the form \eqref{t7red6q5} (see Proposition~\ref{vytde6e6}). Then the series $\sum_{n=0}^\infty \langle\omega^{\otimes n},\varphi^{(n)}\rangle$ converges in the topology of $\mathcal E^{\alpha}_{\mathrm{min}}(\Phi')$. 
\end{corollary}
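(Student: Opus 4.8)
The plan is to derive Corollary \ref{trsw5y647ew} directly from Theorem \ref{tctfdr6tqde6}, which is the substantial result. Given $f\in\mathcal E^\alpha_{\mathrm{min}}(\Phi')$ with $\alpha>0$, Theorem \ref{tctfdr6tqde6} identifies $\mathcal E^\alpha_{\mathrm{min}}(\Phi')$ with $\operatornamewithlimits{proj\,lim}_{\tau\in T,\,l\in\N_0}\mathbf E^\alpha_l(\mathcal H_{-\tau})$, so for every $\tau\in T$ and $l\in\N_0$ the quantity $\|f\|_{\tau,l,\alpha}=\sum_{n=0}^\infty (n!)^{1/\alpha}2^{ln}\|\varphi^{(n)}\|_\tau$ is finite, where $(\varphi^{(n)})_{n\ge0}$ are the kernels from Proposition \ref{vytde6e6}. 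Denote by $S_N(\omega):=\sum_{n=0}^N\langle\omega^{\otimes n},\varphi^{(n)}\rangle$ the partial sums. These are polynomials, hence lie in each $\mathbf E^\alpha_l(\mathcal H_{-\tau})$, and $f-S_N$ has kernels $\varphi^{(n)}$ for $n>N$ and $0$ for $n\le N$. Therefore $\|f-S_N\|_{\tau,l,\alpha}=\sum_{n=N+1}^\infty (n!)^{1/\alpha}2^{ln}\|\varphi^{(n)}\|_\tau$, which is the tail of a convergent series and thus tends to $0$ as $N\to\infty$. Since this holds for every seminorm generating the projective-limit topology on $\mathcal E^\alpha_{\mathrm{min}}(\Phi')$, the series $\sum_{n=0}^\infty\langle\omega^{\otimes n},\varphi^{(n)}\rangle$ converges to $f$ in $\mathcal E^\alpha_{\mathrm{min}}(\Phi')$.

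For the case $\alpha=0$, I would reduce to the positive case using the definition \eqref{tye64w3w}: $\mathcal E^0_{\mathrm{min}}(\Phi')=\operatornamewithlimits{proj\,lim}_{\alpha>0}\mathcal E^\alpha_{\mathrm{min}}(\Phi')$. A net converges in this projective limit if and only if it converges in $\mathcal E^\alpha_{\mathrm{min}}(\Phi')$ for every $\alpha>0$. Since $f\in\mathcal E^0_{\mathrm{min}}(\Phi')$ lies in $\mathcal E^\alpha_{\mathrm{min}}(\Phi')$ for each $\alpha>0$, and (crucially) its kernel representation \eqref{t7red6q5} from Proposition \ref{vytde6e6} is the same regardless of $\alpha$ — it is determined by $f$ alone via the G\^ateaux expansion — the partial sums $S_N$ converge to $f$ in each $\mathcal E^\alpha_{\mathrm{min}}(\Phi')$ by the argument above, hence in $\mathcal E^0_{\mathrm{min}}(\Phi')$.

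There is no real obstacle here; the content has been front-loaded into Theorem \ref{tctfdr6tqde6} and Proposition \ref{vytde6e6}. The only point requiring a line of care is the observation that the kernels in the expansion \eqref{t7red6q5} are intrinsic to $f$ (independent of $\tau$, $l$, $\alpha$): this is what lets us say that truncating the series produces an element whose $\|\cdot\|_{\tau,l,\alpha}$-norm is exactly the tail $\sum_{n>N}(n!)^{1/\alpha}2^{ln}\|\varphi^{(n)}\|_\tau$, and it is also what makes the $\alpha=0$ reduction work. Uniqueness of the kernels follows from the uniqueness of the G\^ateaux power series expansion of an entire function, so I would simply invoke that. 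Thus the proof is essentially one paragraph: finiteness of each norm $\Rightarrow$ tails vanish $\Rightarrow$ convergence in the projective-limit topology, with a short addendum handling $\alpha=0$.
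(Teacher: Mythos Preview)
Your argument is correct and is exactly the approach the paper intends: the corollary is stated as immediate from Theorem~\ref{tctfdr6tqde6}, and your tail-of-a-convergent-series computation in each $\|\cdot\|_{\tau,l,\alpha}$-norm, together with the projective-limit reduction for $\alpha=0$, is precisely what ``immediate'' means here. The observation that the kernels $\varphi^{(n)}$ are intrinsic to $f$ is indeed the only point needing a word, and your justification via uniqueness of the G\^ateaux expansion is fine.
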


\begin{remark} By analogy with the proof of Theorem~\ref{tctfdr6tqde6}, it is not difficult to show that, for each $\alpha\in[0,\infty)$, $\mathcal E^{\alpha}_{\mathrm{min}}(\Phi')$ is a nuclear space.
\end{remark}

\begin{corollary}\label{rtew45uwe}
Assume $T=\mathbb N_0$ and for each $\tau\in\mathbb N_0$, the Hilbert space $\mathcal H_{\tau+1}$ is continuously embedded into $\mathcal H_{\tau}$ (equivalently, $\Phi$ is a countably-Hilbert nuclear space).
Let $\alpha\ge0$. Then,  each  function $f\in  \mathcal E^{\alpha}_{\mathrm{min}}(\Phi')$ is entire on the space $\Phi'$ equipped with the inductive limit topology.
\end{corollary}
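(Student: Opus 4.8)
The plan is to verify directly the two requirements in the definition of an entire function on the locally convex space $\Phi'$ (endowed with its inductive limit topology): G\^ateaux holomorphy and continuity. The first requirement is immediate. Since $T=\mathbb N_0$ and the Hilbert spaces $\mathcal H_{-\tau}$ increase with $\tau$, any two elements $\xi,\eta\in\Phi'$ lie in a common $\mathcal H_{-\tau}$; and since $f\in\mathcal E^{\alpha}_{\mathrm{min}}(\Phi')$ is by definition restricted-entire (for $\alpha=0$ this follows from $\mathcal E^{0}_{\mathrm{min}}(\Phi')\subset\mathcal E^{1}_{\mathrm{min}}(\Phi')$), the restriction $f\restriction_{\mathcal H_{-\tau}}$ is entire on the Banach space $\mathcal H_{-\tau}$, so that $z\mapsto f(\xi+z\eta)$ is holomorphic on a neighborhood of $0$ in $\mathbb C$. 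Hence $f$ is G\^ateaux holomorphic on $\Phi'$, and everything reduces to proving that $f$ is continuous with respect to the inductive limit topology on $\Phi'$.

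The key point is that, under the present hypotheses, $\Phi'$ is a (DFS)-space. Indeed, $\Phi$ is then a Fr\'echet--Schwartz (in fact Fr\'echet--nuclear) space: after passing to a cofinal subsequence of the indices --- which alters neither $\Phi$ nor $\Phi'$ --- we may assume that every embedding $\mathcal H_{\tau+1}\hookrightarrow\mathcal H_{\tau}$ is Hilbert--Schmidt, hence that the dual embedding $\mathcal H_{-\tau}\hookrightarrow\mathcal H_{-(\tau+1)}$ is compact, so that $\Phi'$ is a countable inductive limit of Hilbert spaces with compact linking maps. Such a space is a hemicompact $k$-space (a $k_\omega$-space): it admits an increasing sequence of compact subsets $K_m\subset\Phi'$ with $\bigcup_m K_m=\Phi'$ --- one may take $K_m$ to be the closure in $\Phi'$ of a sufficiently large ball of $\mathcal H_{-m}$ --- such that a map from $\Phi'$ into an arbitrary topological space is continuous if and only if its restriction to each $K_m$ is continuous. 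Granting this classical fact about (DFS)-spaces, the corollary follows at once: each $K_m$ is contained in some $\mathcal H_{-\tau}$, on which $f$ is continuous, so $f\restriction_{K_m}$ is continuous for every $m$, hence $f$ is continuous on $\Phi'$; combined with the G\^ateaux holomorphy established above, this says that $f$ is entire on $\Phi'$.

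I expect the main obstacle to be precisely the step ``continuous on each $\mathcal H_{-\tau}$ $\Rightarrow$ continuous on the inductive limit $\Phi'$'': this is false for general non-strict countable inductive limits of Banach spaces, and what makes it work here is exactly the compactness (Schwartz property) of the linking maps --- the same reason the statement is restricted to countably-Hilbert $\Phi$. If one wishes to avoid invoking the $k_\omega$ structure as a black box, an alternative is to use Theorem~\ref{tctfdr6tqde6}: writing $f(\omega)=\sum_{n=0}^\infty\langle\omega^{\otimes n},\varphi^{(n)}\rangle$ as in Proposition~\ref{vytde6e6}, each partial sum $\sum_{n=0}^N\langle\omega^{\otimes n},\varphi^{(n)}\rangle$ is a polynomial that is entire on $\Phi'$ (again by the $k_\omega$ argument applied to polynomials), and convergence in any of the norms $\mathbf n_{\tau,l,\alpha}$ (or $\|\cdot\|_{\tau,l,\alpha}$) dominates uniform convergence on each $K_m$; thus $f$ is, on every $K_m$, a uniform limit of continuous functions, hence continuous there, and one concludes as before. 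In either approach the essential input is the same compactness of the embeddings.
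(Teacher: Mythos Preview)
Your proposal is correct. The paper itself does not give a proof of this corollary; it only states that the argument is a modification of the proof of \cite[Theorem~3.2]{KY} and leaves the details to the reader. Your approach---verifying G\^ateaux holomorphy directly and then establishing continuity via the (DFS)/Silva structure of $\Phi'$ (compact linking maps, hence a $k_\omega$-space where continuity can be checked on a fundamental sequence of compacta)---is precisely the circle of ideas underlying the Kubo--Yokoi argument, which proceeds via sequential continuity using that in a (DFS)-space every convergent sequence already converges in some step $\mathcal H_{-\tau}$. Both routes rest on the same compactness of the embeddings $\mathcal H_{-\tau}\hookrightarrow\mathcal H_{-(\tau+1)}$, which you correctly identify as the essential input and the reason the hypothesis ``countably-Hilbert'' is needed.

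One minor point worth tightening: when you assert that the closed ball of $\mathcal H_{-m}$ is compact in $\Phi'$, it helps to say explicitly that it is compact already in $\mathcal H_{-(m+1)}$ (weak compactness plus convexity gives norm closedness, and the compact embedding gives relative compactness), so that the $\Phi'$-subspace topology and the $\mathcal H_{-(m+1)}$-subspace topology coincide on it by the usual compact--Hausdorff argument; this is what lets you transport the continuity of $f\restriction_{\mathcal H_{-(m+1)}}$ down to $f\restriction_{K_m}$ in the $\Phi'$-topology.
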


The proof of Corollary \ref{rtew45uwe} is a modification of the proof of   \cite[Theorem 3.2]{KY}. We leave the details to the interested reader.

\section{Umbral and Sheffer operators}\label{ye6i4}

Let us first briefly recall some definitions and results from \cite{FKLO}. 

\begin{remark}
It should be noted that paper \cite{FKLO} deals with the real Gel'fand triple
$\mathcal D\subset L^2(\R^d,dx)\subset \mathcal D'$, where $\mathcal D$ is the nuclear space of all smooth compactly supported functions on $\R^d$. In fact, all the results of \cite{FKLO} can be 
immediately extended to the complexification of this real Gel'fand triple. Furthermore, it is easy to see that the  results of \cite{FKLO} that we will use in the present paper are true for an arbitrary complex Gel'fand triple \eqref{fd6re7itfrde}.
\end{remark}

\subsection{Sheffer  sequences on $\Phi'$}\label{rtew4w3xx}

A function $p:\Phi'\to\mathbb C$ is called a {\it polynomial on $\Phi'$} if
$p(\omega)=\sum_{k=0}^n\langle\omega^{\otimes k},\varphi^{(k)}\rangle$, where $\varphi^{(k)}\in\Phi^{\odot k}$, $k=0,1,\dots,n$, $n\in\mathbb N_0$. If $\varphi^{(n)}\ne 0$, one says that $p$ is a {\it polynomial of degree $n$}.
We denote by $\mathcal P(\Phi')$ the linear space of all polynomials on $\Phi'$.  

Assume that, for each $n\in\mathbb N_0$,  a mapping $P^{(n)}:\Phi'\to \Phi '{}^{\odot n}$
is of the form $P^{(n)}(\omega)=\sum_{k=0}^n U_{n,k}\,\omega^{\otimes k}$
 with $U_{n,k}\in\mathcal L(\Phi '{}^{\odot k},\Phi '{}^{\odot n})$. Furthermore, assume that, for each $n\in\mathbb N_0$, $U_{n,n}=\mathbf 1$, the identity operator on
$\Phi'{}^{\odot n}$. Then we call $(P^{(n)})_{n=0}^\infty$ a {\it sequence of  monic polynomials on $\Phi '$}.  

Note that  
\begin{equation*}
\langle P^{(n)}(\omega),\varphi^{(n)}\rangle=\langle\omega^{\otimes n},
\varphi^{(n)}\rangle+\sum_{k=0}^{n-1}\langle\omega^{\otimes k},
V_{k,n}\varphi^{(n)}\rangle,\quad \varphi^{(n)}\in\Phi ^{\odot n}.
\end{equation*}
Here $V_{k,n}:=U_{n,k}^*\in \mathcal L(\Phi ^{\odot n},\Phi ^{\odot k})$, i.e., $V_{k,n}$ is the adjoint operator of $U_{n,k}$. In particular, $\langle P^{(n)}(\cdot),\varphi^{(n)}\rangle\in \mathcal P(\Phi')$. 
Note also that each $P^{(n)}(\omega)\in\Phi'^{\odot n}$ is completely determined by the values 
$\big(\langle P^{(n)}(\omega),\xi^{\otimes n}\rangle\big)_{\xi\in\Phi}$.

Let $(P^{(n)})_{n=0}^\infty$ be sequence of monic polynomials on $\Phi '$. Then each polynomial $p:\Phi'\to\mathbb C$ of degree $n$ has a unique representation $p(\omega)=\sum_{k=0}^n\langle P^{(k)}(\omega),\varphi^{(k)}\rangle$ with $\varphi^{(k)}\in \Phi^{\odot k}$. 
Define  a linear mapping $\mathfrak S:\mathcal P(\Phi')\to\mathcal P(\Phi')$ by 
\begin{equation}\label{xreas4q4aas}
\mathfrak S \langle \omega ^{\otimes n},\varphi^{(n)}\rangle:=\langle P^{(n)}(\omega),\varphi^{(n)}\rangle,\quad \varphi^{(n)}\in\Phi^{\odot n},\ n\in\mathbb N_0.\end{equation}
Then $\mathfrak S$ is a bijective mapping. 

Let $(P^{(n)})_{n=0}^\infty$ be a sequence of monic polynomials  on $\Phi'$.
We say that  $(P^{(n)})_{n=0}^\infty$ is of {\it binomial type} if, for any $n\in\mathbb N$ and any $\omega,\zeta\in\Phi'$, 
$$P^{(n)}(\omega+\zeta)=\sum_{k=0}^n\binom{n}{k}P^{(k)}(\omega)\odot P^{(n-k)}(\zeta).$$
Here $P^{(k)}(\omega)\odot P^{(n-k)}(\zeta)\in \Phi'{}^{\odot n}$ is  the symmetrization of 
 $P^{(k)}(\omega)\otimes P^{(n-k)}(\zeta)\in \Phi'{}^{\otimes n}$. 
By \cite[Theorem~4.1]{FKLO}, a sequence of monic polynomials, $(P^{(n)})_{n=0}^\infty$\,,  is of binomial type if and only if it has the generating function 
\begin{equation}
\sum_{n=0}^\infty\frac{1}{n!}\langle P^{(n)}(\omega),\xi^{\otimes n}\rangle=\exp\big[\langle\omega,A(\xi)\rangle\big],\quad\omega\in\Phi',\label{Eq23}
\end{equation}
where
\begin{equation}\label{ft7r}
A(\xi)=\sum_{k=1}^\infty A_k\xi^{\otimes k},\end{equation}
with
$A_k\in\mathcal{L}(\Phi^{\odot  k},\Phi)$, $k\in\mathbb N$, and
$A_1=\mathbf 1$, the identity operator on  $\Phi$.

\begin{remark}\label{dr65w83s}
For $A(\xi)$ as in formula \eqref{ft7r},  there exists a formal power series
$
B(\xi)=\sum_{k=1}^\infty B_k\xi^{\otimes k}$
with  $B_k\in\mathcal{L}(\Phi^{\odot  k},\Phi)$, $k\in\mathbb N$, and
$B_1=\mathbf 1$, that is the {\it compositional inverse of} $A(\xi)$, i.e., 
$A(B(\xi))=B(A(\xi))=\xi$.
 Hence, formula \eqref{Eq23} implies
\begin{equation}\label{gu7r}
\sum_{n=0}^\infty\frac{1}{n!}\langle P^{(n)}(\omega),\big(B(\xi)\big)^{\otimes n}\rangle=\exp\big[\langle\omega,\xi\rangle\big],\quad\omega\in\Phi'.
\end{equation}
\end{remark}

Let $(S^{(n)})_{n=0}^\infty$ be a sequence of monic polynomials on $\Phi'$. We call $(S^{(n)})_{n=0}^\infty$ a {\it Sheffer sequence} if it has the generating function
\begin{equation}\label{tyqde6iq}
\sum_{n=0}^\infty\frac{1}{n!}\langle S^{(n)}(\omega),\xi^{\otimes n}\rangle
=\frac{\exp[\langle\omega,A(\xi)\rangle]}{\rho(A(\xi))},\end{equation}
where $A(\xi)$ is as in formula \eqref{ft7r} and 
\begin{equation}\label{edfuyr7i}
\rho(\xi)=\sum_{n=0}^\infty \langle\rho^{(n)},\xi^{\otimes n}\rangle,\end{equation}
 with $\rho^{(n)}\in\Phi'^{\odot n}$, $n\in\mathbb N$, and $\rho^{(0)}=1$. The  sequence of polynomials of binomial type, $(P^{(n)})_{n=0}^\infty$, with generating function \eqref{Eq23} is called the {\it basic sequence for the Sheffer sequence} $(S^{(n)})_{n=0}^\infty$.

A Sheffer sequence $(S^{(n)})_{n=0}^\infty$ with generating function \eqref{tyqde6iq} in which $A(\xi)=\xi$ is called an {\it Appell sequence on $\Phi'$}.  Thus,  in this case, the sequence $(S^{(n)})_{n=0}^\infty$ has the  generating function
\begin{equation}\label{tedf66iq}
\sum_{n=0}^\infty\frac{1}{n!}\langle S^{(n)}(\omega),\xi^{\otimes n}\rangle
=\frac{\exp[\langle\omega,\xi\rangle]}{\rho(\xi)},\end{equation}
where $\rho(\xi)$ is given by \eqref{edfuyr7i}. 
Equivalently, an Appell sequence is a Sheffer sequence for which the basic sequence is  $(\omega^{\otimes n})_{n=0}^\infty$.

\subsection{Sheffer homeomorphisms}\label{qwrsdreddw}

Let $(S^{(n)})_{n=0}^\infty$ be a Sheffer sequence on $\Phi'$. Consider the  bijective linear mapping\linebreak  $\mathfrak S:\mathcal P(\Phi')\to\mathcal P(\Phi')$ given by the formula \eqref{xreas4q4aas} in which $P^{(n)}$ is replaced by $S^{(n)}$. 
 Then $\mathfrak S$ is called the {\it Sheffer operator corresponding to the Sheffer sequence $(S^{(n)})_{n=0}^\infty$}. In the special case where $(S^{(n)})_{n=0}^\infty$ is of  binomial type, we will denote the corresponding Sheffer operator by $\mathfrak U$ and call it an {\it umbral operator}.   


The following theorem is the main result of the paper.

\begin{theorem}\label{sew56uw} Let $(S^{(n)})_{n=0}^\infty$ be a Sheffer sequence with generating function \eqref{tyqde6iq}. Assume that $\rho$ is holomorphic on a neighborhood of zero in $\Phi$. Let $B(\xi)$ denote the compositional inverse of $A(\xi)$, and assume that  the formal power series $A(\xi)$ and $B(\xi)$ are 
quasi-holomorphic on a neighborhood of zero in $\Phi$ (see Definition~\ref{64834w7}).  Let $\alpha\in[0,1]$. Then the corresponding Sheffer operator $\mathfrak S$  extends
by continuity to a linear self-homeomorphism of $\mathcal E^{\alpha}_{\mathrm{min}}(\Phi')$.
\end{theorem}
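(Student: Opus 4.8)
The plan is to reduce everything to the projective-limit description of $\mathcal E^\alpha_{\mathrm{min}}(\Phi')$ provided by Theorem~\ref{tctfdr6tqde6}, and to work entirely on the level of the coefficient sequences $(\varphi^{(n)})$. Writing $f(\omega)=\sum_{n=0}^\infty\langle\omega^{\otimes n},\varphi^{(n)}\rangle$, the action of the Sheffer operator $\mathfrak S$ is, by \eqref{xreas4q4aas}, $\mathfrak S f(\omega)=\sum_{n=0}^\infty\langle S^{(n)}(\omega),\varphi^{(n)}\rangle$, and using the generating function \eqref{tyqde6iq} together with Remark~\ref{dr65w83s} one can read off the transformed coefficients $(\psi^{(m)})$ of $\mathfrak S f$ in terms of $(\varphi^{(n)})$ as a triangular-type expression built from the coefficient operators of $A$, $B$ and $\rho$. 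Concretely, factor $\mathfrak S=\mathfrak S_\rho\circ\mathfrak U$, where $\mathfrak U$ is the umbral operator of the basic sequence (governed by $A$, with inverse governed by $B$) and $\mathfrak S_\rho$ is the Appell-type operator associated with $\rho$; it suffices to show each factor, and its inverse, is continuous on $\mathcal E^\alpha_{\mathrm{min}}(\Phi')$. Since $\mathcal P(\Phi')$ is dense and $\mathfrak S$ is already a bijection of $\mathcal P(\Phi')$, continuity of $\mathfrak S$ and of $\mathfrak S^{-1}$ on the polynomials, in the $\mathcal E^\alpha_{\mathrm{min}}$-topology, yields the self-homeomorphism claim by extension.

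\textbf{Key estimates.} For the umbral factor, fix $\tau\in T$ and $l\in\mathbb N_0$ and estimate $\|\mathfrak U f\|_{\tau,l,\alpha}$. Using Corollary~\ref{tuqdftqq}/condition (QH) for $A$, pick $\tau'\in T$ and $C\ge0$ with $\|A_k\|_{\mathcal L(\mathcal H^{\odot k}_{\tau'},\mathcal H_\tau)}\le C^k$; one expands $\langle P^{(n)}(\omega),\varphi^{(n)}\rangle$ via \eqref{Eq23} and organizes the resulting multilinear terms so that the $n$-th coefficient of $\mathfrak U f$ becomes a sum over compositions of $n$ of products of the $A_k$'s applied to $\varphi^{(n)}$, contributing at most (number of compositions) $\times C^n \times 2^{l'n}$-type factors against $\|\varphi^{(n)}\|_{\tau'}$ for a slightly larger $l'$. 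The combinatorial factor is $\le 2^n$, which is absorbed into a shift $l\to l+1$ of the index; crucially, the weight $(n!)^{1/\alpha}$ appearing in \eqref{yufwrq76rf} is \emph{unchanged} by these purely ``off-diagonal'' operations (the degree is preserved), which is precisely why $\alpha\le 1$ is not needed here and the argument is uniform. The inverse umbral operator is handled identically using the coefficient operators $B_k$ of the compositional inverse $B(\xi)$, which satisfy (QH) by hypothesis. For the factor $\mathfrak S_\rho$: since $\rho$ is holomorphic on a neighborhood of zero in $\Phi$, Corollary~\ref{e6r656e3} gives $\tau'\in T$ and $C_2\ge0$ with $\|\rho^{(j)}\|_{-\tau'}\le C_2^{\,j}$, and the same for $1/\rho$ (which is again holomorphic near $0$ since $\rho(0)=\rho^{(0)}=1$). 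The $n$-th coefficient of $\mathfrak S_\rho f$ is $\psi^{(n)}=\sum_{j=0}^{?}$ a contraction of $\rho^{(j)}$ (or its inverse analogue) with $\varphi^{(n+j)}$; here one gains a weight $(n!)^{1/\alpha}$ but loses $((n+j)!)^{1/\alpha}$, so one needs $\sum_j C_2^{\,j}\,2^{-?j}\,\big((n!)/((n+j)!)\big)^{1/\alpha}$ to be summable, and since $(n+j)!\ge n!\,j!$ this is dominated by $\sum_j C_2^{\,j}/(j!)^{1/\alpha}$, which converges for \emph{every} $\alpha>0$. Finally, the resulting norm of $\mathfrak S_\rho f$ is bounded by a constant times $\|f\|_{\tau',l'',\alpha}$ for suitable $l''\ge l$, giving continuity; the inverse is symmetric.

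\textbf{Main obstacle.} The genuinely delicate point is the bookkeeping for the umbral operator: expressing the coefficients of $P^{(n)}$ (equivalently, iterating the composition $A\circ\cdots\circ A$ implicit in \eqref{Eq23}) as a controlled sum of compositions, and verifying that the combinatorial multiplicities together with the geometric constants $C^n$ only cost a finite shift in the index $l$ and a finite enlargement of $\tau$, \emph{while leaving the factorial weight untouched}. Making this uniform over all $\tau$ and $l$ — so that the bounds assemble into continuity on the projective limit — and then checking that $\mathfrak U$ and $\mathfrak U^{-1}$ are mutually inverse \emph{on the extended space} (which follows since both are continuous and agree with the inverse bijection on the dense subspace $\mathcal P(\Phi')$) is the technical heart. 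Once this is in place, composing the umbral estimate with the $\mathfrak S_\rho$ estimate and invoking Corollary~\ref{trsw5y647ew} for the convergence of the series yields the full statement; the case $\alpha=0$ follows by taking the projective limit over $\alpha>0$ via \eqref{tye64w3w}.
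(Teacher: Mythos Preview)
Your overall architecture---factor $\mathfrak S$ through the umbral operator $\mathfrak U$ and an Appell-type operator, then establish continuity of each factor and its inverse on the coefficient level using Theorem~\ref{tctfdr6tqde6}---is exactly the paper's approach (the paper writes $\mathfrak S=\widetilde{\mathfrak S}\circ\mathfrak U$, with $\widetilde{\mathfrak S}$ governed by $\rho\circ A$ rather than $\rho$, but the estimates are of the same nature). However, there is a genuine gap in your key estimates, and it is precisely the place where the restriction $\alpha\le1$ enters.

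Your claim that ``the weight $(n!)^{1/\alpha}$ \dots\ is \emph{unchanged} by these purely `off-diagonal' operations (the degree is preserved), which is precisely why $\alpha\le1$ is not needed here'' is incorrect. Expanding $\langle P^{(n)}(\omega),\varphi^{(n)}\rangle$ via \eqref{Eq23} produces monomials $\langle\omega^{\otimes m},\cdot\rangle$ for every $m\le n$; concretely, the $m$-th monomial coefficient of $\mathfrak U p$ is
\[
\psi^{(m)}=\frac{1}{m!}\sum_{k_1,\dots,k_m\ge1}(k_1+\dots+k_m)!\,(A_{k_1}\otimes\dots\otimes A_{k_m})\,\varphi^{(k_1+\dots+k_m)},
\]
so $\psi^{(m)}$ depends on $\varphi^{(n)}$ for all $n\ge m$ and carries an explicit factor $n!/m!$. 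When you insert this into $\|\mathfrak U p\|_{\tau,l,\alpha}$ and use $\|\varphi^{(n)}\|_{\tau'}\le (n!)^{-1/\alpha}2^{-l'n}\|p\|_{\tau',l',\alpha}$, the factorial contribution is $(m!)^{1/\alpha-1}(n!)^{1-1/\alpha}$ with $n=k_1+\dots+k_m\ge m$; this is bounded by $1$ precisely when $1-\tfrac1\alpha\le0$, i.e.\ $\alpha\le1$. For $\alpha>1$ it is unbounded, and indeed Proposition~\ref{6ew6u47eies} shows that a nontrivial umbral operator \emph{never} extends to $\mathcal E^\alpha_{\mathrm{min}}$ for $\alpha>1$. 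Thus the factorial weight is not ``unchanged'': controlling its change is the heart of the argument, and your sketch omits it.

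The same misreading affects your Appell estimate. The $m$-th output coefficient of $\mathfrak S_\rho$ is $\psi^{(m)}=\sum_{j\ge0}\frac{(m+j)!}{m!}\,\langle\theta^{(j)},\varphi^{(m+j)}\rangle$; the factor $(m+j)!/m!$ from formula~\eqref{drs5es5w3w} is missing from your bound. With it, the summand becomes $(m!/(m+j)!)^{1/\alpha-1}C^j$, which again requires $\alpha\le1$ under the mere hypothesis $\|\theta^{(j)}\|_{-\tau}\le C^j$ (compare Theorem~\ref{ft7r8o5}, where stronger decay of $\theta^{(j)}$ is imposed to reach $\alpha>1$). Your conclusion that the Appell factor ``converges for \emph{every} $\alpha>0$'' is therefore unsupported. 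Once you insert the missing $n!/m!$ factors and use $\alpha\le1$ to bound the factorial ratios, the rest of your outline (choice of $\tau'$ via (QH), geometric absorption into a shift of $l$, density of $\mathcal P(\Phi')$, and the $\alpha=0$ case via \eqref{tye64w3w}) goes through as in the paper.
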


\begin{remark}\label{cxtew55u}
Let $O$ be an open neighborhood of zero in $\mathbb C^n$ and let $f:O\to\mathbb C^n$ ($n\in\mathbb N$) be a holomorphic function  such that $f(0)=0$ and the differential (the Jacobian matrix) of $f$ at 0 is the identity operator on $\mathbb C^n$. Then, as we know from  complex multivariate analysis,  the function  $f$ is invertible on a neighborhood of zero and its inverse function, $f^{-1}$, is holomorphic. Hence, it would be natural to expect that if $A(\xi)=\sum_{k=1}^\infty A_k\xi^{\otimes k}$ with $A_1=\mathbf 1$ is quasi-holomorphic on a neighborhood of zero in $\Phi$, then so is its compositional inverse $B(\xi)$.  It is an open problem whether this is indeed the case. 
\end{remark}

\begin{remark}\label{cr5w7k854ea}
In view of Theorem~\ref{tctfdr6tqde6} and the definition  of the projective limit topology, a linear operator $\mathfrak A:\mathcal E^{\alpha}_{\mathrm{min}}(\Phi')\to \mathcal E^{\alpha}_{\mathrm{min}}(\Phi') $ is continuous if and only if for any $\tau\in T$ and $l\in\mathbb N_0$ there exist $\tau'\in T$ and $l'\in\mathbb N_0$ such that $\mathfrak A$ extends to $\mathfrak A\in\mathcal L\big(\mathbf E^\alpha_{l'}(\mathcal H_{-\tau'}),\mathbf E^\alpha_{l}(\mathcal H_{-\tau})\big)$.
\end{remark}

Before proving Theorem \ref{sew56uw}, let us first formulate  an immediate corollary.

\begin{corollary}\label{tqde6} Let $(S^{(n)})_{n=0}^\infty$ be a Sheffer sequence as in Theorem \ref{sew56uw}.

 {\rm (i)} Let $\alpha\in[0,1]$. Each function $f\in \mathcal E^{\alpha}_{\mathrm{min}}(\Phi')$ admits a unique representation as 
\begin{equation}\label{d6esd6q}
f(\omega)=\sum_{n=0}^\infty\langle S^{(n)}(\omega),\varphi^{(n)}\rangle,\end{equation}
where $\varphi^{(n)}\in\Phi^{\odot n}$, $n\in\mathbb N_0$, and the series on the right-hand side of \eqref{d6esd6q} converges in $\mathcal E^{\alpha}_{\mathrm{min}}(\Phi')$.

{\rm (ii)} Let $\alpha\in(0,1]$ and 
let $f\in \mathcal E^{\alpha}_{\mathrm{min}}(\Phi')$ be of the form \eqref{d6esd6q}. For each $\tau\in T$ and $l\in \mathbb N_0$, define
\begin{equation}\label{ctes5w3wq}
 \vertiii{f}_{\tau,l,\alpha}:= \sum_{n=0}^\infty (n!)^{1/\alpha}\, 2^{ln}\|\varphi^{(n)}\|_\tau<\infty.
\end{equation}
Then $ \vertiii{\cdot}_{\tau,l,\alpha}$ determines a norm on $\mathcal E^{\alpha}_{\mathrm{min}}(\Phi')$, and we denote by $\mathbb E_{l,\tau}^\alpha$ the Banach space obtained as the completion of $\mathcal E^{\alpha}_{\mathrm{min}}(\Phi')$ in this norm. Then
$$\mathcal E^{\alpha}_{\mathrm{min}}(\Phi')=\operatornamewithlimits{proj\,lim}_{\tau\in T,\,l\in  \N_0}\mathbb E^{\alpha}_{l,\tau}.$$
\end{corollary}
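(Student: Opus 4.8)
The plan is to read off both parts of the corollary from Theorem \ref{sew56uw} together with Theorem \ref{tctfdr6tqde6} and Corollary \ref{trsw5y647ew}. For part (i), I would start from a function $f\in\mathcal E^{\alpha}_{\mathrm{min}}(\Phi')$. By Proposition \ref{vytde6e6} and Corollary \ref{trsw5y647ew}, $f$ has a representation $f(\omega)=\sum_{n=0}^\infty\langle\omega^{\otimes n},\psi^{(n)}\rangle$ with $\psi^{(n)}\in\Phi^{\odot n}$, the series converging in $\mathcal E^{\alpha}_{\mathrm{min}}(\Phi')$. Since by Theorem \ref{sew56uw} the Sheffer operator $\mathfrak S$ extends to a linear self-homeomorphism of $\mathcal E^{\alpha}_{\mathrm{min}}(\Phi')$, it is in particular a bijection, so there is a unique $g\in\mathcal E^{\alpha}_{\mathrm{min}}(\Phi')$ with $\mathfrak S g=f$. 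Writing $g(\omega)=\sum_{n=0}^\infty\langle\omega^{\otimes n},\varphi^{(n)}\rangle$ (again via Corollary \ref{trsw5y647ew}), continuity of $\mathfrak S$ lets me pass it through the convergent series, and by the defining formula \eqref{xreas4q4aas} (with $P^{(n)}$ replaced by $S^{(n)}$) each term maps as $\mathfrak S\langle\omega^{\otimes n},\varphi^{(n)}\rangle=\langle S^{(n)}(\omega),\varphi^{(n)}\rangle$; hence $f(\omega)=\sum_{n=0}^\infty\langle S^{(n)}(\omega),\varphi^{(n)}\rangle$ with convergence in $\mathcal E^{\alpha}_{\mathrm{min}}(\Phi')$. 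Uniqueness follows because $\mathfrak S$ is injective on $\mathcal E^{\alpha}_{\mathrm{min}}(\Phi')$: if two such expansions represented the same $f$, applying $\mathfrak S^{-1}$ and using continuity would give two monomial expansions of $g$, which are unique by Proposition \ref{vytde6e6} (the kernels $\varphi^{(n)}$ are recovered from the G\^ateaux derivatives of $g$ at zero).

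For part (ii), fix $\alpha\in(0,1]$. The key observation is that, in the notation of Theorem \ref{tctfdr6tqde6}, the quantity $\vertiii{f}_{\tau,l,\alpha}$ is exactly the norm $\|g\|_{\tau,l,\alpha}$ of $g=\mathfrak S^{-1}f$ in the Banach space $\mathbf E^\alpha_l(\mathcal H_{-\tau})$, where $g$ has monomial kernels $\varphi^{(n)}$. Thus $\vertiii{\cdot}_{\tau,l,\alpha}=\|\mathfrak S^{-1}(\cdot)\|_{\tau,l,\alpha}$, and since $\mathfrak S^{-1}$ is a linear injection on $\mathcal E^{\alpha}_{\mathrm{min}}(\Phi')$ and $\|\cdot\|_{\tau,l,\alpha}$ is a norm on $\mathbf E^\alpha_l(\mathcal H_{-\tau})\supset\mathcal E^{\alpha}_{\mathrm{min}}(\Phi')$, the pullback $\vertiii{\cdot}_{\tau,l,\alpha}$ is a norm on $\mathcal E^{\alpha}_{\mathrm{min}}(\Phi')$ (finiteness on all of $\mathcal E^{\alpha}_{\mathrm{min}}(\Phi')$ is exactly Theorem \ref{tctfdr6tqde6}). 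Let $\mathbb E^\alpha_{l,\tau}$ be the completion. The map $\mathfrak S^{-1}:(\mathcal E^{\alpha}_{\mathrm{min}}(\Phi'),\vertiii{\cdot}_{\tau,l,\alpha})\to(\mathcal E^{\alpha}_{\mathrm{min}}(\Phi'),\|\cdot\|_{\tau,l,\alpha})$ is by construction an isometry onto its image, so it extends to an isometric isomorphism $\mathbb E^\alpha_{l,\tau}\cong \mathbf E^\alpha_l(\mathcal H_{-\tau})$ (both are completions of the same space, and one checks the image of $\mathfrak S^{-1}$ is dense in $\mathbf E^\alpha_l(\mathcal H_{-\tau})$, since polynomials are dense there and $\mathfrak S$ maps polynomials onto polynomials). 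Transporting Theorem \ref{tctfdr6tqde6} through this family of isometries gives $\mathcal E^{\alpha}_{\mathrm{min}}(\Phi')=\operatornamewithlimits{proj\,lim}_{\tau\in T,\,l\in\N_0}\mathbb E^\alpha_{l,\tau}$ as topological spaces; equivalently, one simply notes that $\mathfrak S$ is, by Theorem \ref{sew56uw}, a homeomorphism of $\mathcal E^{\alpha}_{\mathrm{min}}(\Phi')=\operatornamewithlimits{proj\,lim}\mathbf E^\alpha_l(\mathcal H_{-\tau})$, and transports the defining projective system along it.

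Since Theorem \ref{sew56uw} and Theorem \ref{tctfdr6tqde6} do the analytic work, there is no serious obstacle here; the only point requiring a little care is the bookkeeping that the extension of $\mathfrak S^{-1}$ to completions is genuinely onto $\mathbf E^\alpha_l(\mathcal H_{-\tau})$ (rather than onto a proper closed subspace), for which one uses that $\mathfrak S$ preserves $\mathcal P(\Phi')$ and that $\mathcal P(\Phi')$ is dense in each $\mathbf E^\alpha_l(\mathcal H_{-\tau})$ in the norm $\|\cdot\|_{\tau,l,\alpha}$ (truncating the series \eqref{yufwrq76rf}). This is why part (ii) is restricted to $\alpha>0$: the norms $\|\cdot\|_{\tau,l,\alpha}$ and hence $\vertiii{\cdot}_{\tau,l,\alpha}$ involve the factor $(n!)^{1/\alpha}$, which is unavailable at $\alpha=0$, where instead the statement of part (i) alone is the natural conclusion.
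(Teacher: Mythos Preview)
Your proposal is correct and follows the same approach as the paper, which treats the corollary as immediate from Theorem~\ref{sew56uw} together with Theorem~\ref{tctfdr6tqde6} and Corollary~\ref{trsw5y647ew}; the paper in fact gives no separate proof beyond remarking (inside the proof of Theorem~\ref{sew56uw}) that the corollary follows once the theorem is established. Your write-up simply supplies the details the paper leaves to the reader, namely the pullback identity $\vertiii{f}_{\tau,l,\alpha}=\|\mathfrak S^{-1}f\|_{\tau,l,\alpha}$ and the transport of the projective system along the homeomorphism~$\mathfrak S$.
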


\begin{remark}
Note that we do not state that, for given $\alpha\in(0,1]$, $\tau\in T$, and $l\in\mathbb N_0$, $\mathbb E^{\alpha}_{l,\tau}$ consists of entire functions on $\mathcal H_{-\tau}$. Nevertheless, for a given $\tau\in T$, one can find $\tau'\in T$ and $l'\in\mathbb N_0$ such that $\mathbb E^{\alpha}_{l',\tau'}$ consists of entire functions on $\mathcal H_{-\tau}$. 
\end{remark}

\begin{proof}[Proof of Theorem \ref{sew56uw}] In view of definition \eqref{tye64w3w}, it is sufficient to prove the result for $\alpha\in(0,1]$. 
We divide the proof of this case into several steps.

{\it Step 1}. We will first prove the result for the umbral operator
$\mathfrak U$ corresponding to a  sequence of polynomials of binomial type, 
$(P^{(n)})_{n=0}^\infty$. By \eqref{Eq23} and \eqref{ft7r}, we have
\begin{align}
\sum_{n=0}^\infty \frac1{n!}\langle P^{(n)}(\omega),\xi^{\otimes n}\rangle&=\sum_{m=0}^\infty \frac1{m!}\left(\sum_{k=1}^\infty \langle A_k^*\omega,\xi^{\otimes k}\rangle\right)^m\notag\\
&=1+\sum_{n=1}^\infty
\sum_{m=1}^n\frac1{m!}\sum_{\substack{k_1,\dots,k_m\in\mathbb N\\k_1+\dots+k_m=n}}\langle (A_{k_1}^*\omega)\odot\dotsm\odot(A_{k_m}^*\omega),\xi^{\otimes n}\rangle.\label{drwqw22}
\end{align}
Hence, for $n\in\mathbb N$,
\begin{equation}\label{ywdex}
P^{(n)}(\omega)=n!
\sum_{m=1}^n\frac1{m!}\sum_{\substack{k_1,\dots,k_m\in\mathbb N\\k_1+\dots+k_m=n}}(A_{k_1}^*\omega)\odot\dotsm\odot(A_{k_m}^*\omega).
\end{equation}
Let $p(\omega)=\sum_{n=0}^N\langle\omega^{\otimes n},\varphi^{(n)}\rangle\in\mathcal P(\Phi')$. Then by \eqref{ywdex} and the definition of $\mathfrak U$,
\begin{align}
&\mathfrak Up(\omega)=\sum_{n=0}^N \langle P^{(n)}(\omega),\varphi^{(n)}\rangle\notag\\
&=\varphi^{(0)}+\sum_{n=1}^N n!
\sum_{m=1}^n\frac1{m!}\sum_{\substack{k_1,\dots,k_m\in\mathbb N\\k_1+\dots+k_m=n}}\langle\omega^{\otimes m},(A_{k_1}\otimes\dots\otimes A_{k_m})\varphi^{(n)}\rangle\notag\\
&=\varphi^{(0)}+\sum_{m=1}^N \bigg\langle \omega^{\otimes m},\frac1{m!}\sum_{n=m}^N n! \sum_{\substack{k_1,\dots,k_m\in\mathbb N\\k_1+\dots+k_m=n}}(A_{k_1}\otimes\dots\otimes A_{k_m})\varphi^{(n)}\bigg\rangle\notag\\
&=\varphi^{(0)}+\sum_{m=1}^N\bigg\langle \omega^{\otimes m},\frac1{m!}\sum_{k_1=1}^\infty\dotsm\sum_{k_m=1}^\infty(k_1+\dots+k_m)! \,(A_{k_1}\otimes\dots\otimes A_{k_m})\varphi^{(k_1+\dots+k_m)}\bigg\rangle,\label{ftde7r5i67}
\end{align}
where we set $\varphi^{(n)}=0$ for $n\ge N+1$.
Fix $\tau\in T$ and $l\in\mathbb N_0$. Then,  by  \eqref{yufwrq76rf} and \eqref{ftde7r5i67},
\begin{multline}\label{xrsw5uw}
\|\mathfrak Up\|_{\tau,l,\alpha}\le|\varphi^{(0)}|+\sum_{m=1}^N 2^{lm} (m!)^{\frac1\alpha-1}\\
\times  \sum_{k_1=1}^\infty\dotsm\sum_{k_m=1}^\infty(k_1+\dots+k_m)! \,
\|(A_{k_1}\otimes\dots\otimes A_{k_m})\varphi^{(k_1+\dots+k_m)}\|_\tau.
\end{multline}
By (QH), 
 there exist $\tau'\in T$ and a constant $C_5\ge0$ such that, for all $k\in\mathbb N$, $\|A_k\|_{\mathcal L(\mathcal H_{\tau'}^{\odot k},\mathcal H_\tau)}\le C_5^k$.
Hence, by \eqref{xrsw5uw},
\begin{equation}\label{xrtesa4qw}
\|\mathfrak Up\|_{\tau,l,\alpha}\le|\varphi^{(0)}|+\sum_{m=1}^N 2^{lm} (m!)^{\frac1\alpha-1}\sum_{k_1=1}^\infty\dotsm\sum_{k_m=1}^\infty(k_1+\dots+k_m)! \, C_5^{k_1+\dots+k_m}\|\varphi^{(k_1+\dots+k_m)}\|_{\tau'}.
\end{equation}
We evidently have, for each $l'\in\mathbb N_0$,
\begin{equation}\label{5w3g}
\|\varphi^{(k)}\|_{\tau'}\le 2^{-l'k}(k!)^{-1/\alpha}\|p\|_{\tau',l',\alpha}\,,\quad k\in\mathbb N_0.
\end{equation}
By \eqref{xrtesa4qw} and \eqref{5w3g},
\begin{align}
\|\mathfrak Up\|_{\tau,l,\alpha}&\le|\varphi^{(0)}|+\sum_{m=1}^N 2^{lm} (m!)^{\frac1\alpha-1}\sum_{k_1=1}^\infty\dotsm\sum_{k_m=1}^\infty\big((k_1+\dots+k_m)!\big)^{1-\frac1\alpha}\notag\\
&\quad\times \left(\frac{C_5}{2^{l'}}\right)^{k_1+\dots+k_m}\|p\|_{\tau',l',\alpha}.\label{dgyder67}
\end{align}
Since $k_1+\dots+k_m\ge m$ and $1-\frac1\alpha\le 0$, formula \eqref{dgyder67} implies
\begin{align}
\|\mathfrak Up\|_{\tau,l,\alpha}&\le |\varphi^{(0)}|+\sum_{m=1}^N 2^{lm} \left(\sum_{k=1}^\infty \left(\frac{C_5}{2^{l'}}\right)^k\right)^m \|p\|_{\tau',l',\alpha}\notag\\
&=|\varphi^{(0)}|+\sum_{m=1}^N 2^{lm}\left(\frac{C_5}{2^{l'}-C_5}\right)^m
\|p\|_{\tau',l',\alpha}\notag\\
&\le \|p\|_{\tau',l',\alpha}\sum_{m=0}^\infty \left(\frac{2^lC_5}{2^{l'}-C_5}\right)^m\notag\\
&=\|p\|_{\tau',l',\alpha}\left(1-\frac{2^lC_5}{2^{l'}-C_5}\right)^{-1}\label{y6e6i4}
\end{align}
for $l'\in\mathbb N_0$ satisfying $2^{l'}>C_5(1+2^l)$. Hence, by Theorem~\ref{tctfdr6tqde6}, $\mathfrak U$ extends  by continuity to $\mathfrak U\in\mathcal L(\mathcal E^{\alpha}_{\mathrm{min}}(\Phi'))$.

{\it Step 2}. By using  formula \eqref{gu7r} and analogously to \eqref{drwqw22}, \eqref{ywdex}, we see that 
$$ \omega^{\otimes n}=n!
\sum_{m=1}^n\frac1{m!}\sum_{\substack{k_1,\dots,k_m\in\mathbb N\\k_1+\dots+k_m=n}}(B_{k_1}^*\odot\dotsm \odot B_{k_m}^*)P^{(m)}(\omega).
$$
Hence, analogously to \eqref{ftde7r5i67}, we find, for $p(\omega)=\sum_{n=0}^N\langle\omega^{\otimes n},\varphi^{(n)}\rangle\in\mathcal P(\Phi')$,
\begin{align}
&p(\omega)=\varphi^{(0)}\notag\\
&\text{}+\sum_{m=1}^N\bigg\langle P^{(m)}(\omega),\frac1{m!}\sum_{k_1=1}^\infty\dotsm\sum_{k_m=1}^\infty(k_1+\dots+k_m)! \,(B_{k_1}\otimes\dots\otimes B_{k_m})\varphi^{(k_1+\dots+k_m)}\bigg\rangle.\notag
\end{align}
This implies
\begin{align}
&\mathfrak U^{-1}p(\omega)=\varphi^{(0)}\notag\\
&\text{}+\sum_{m=1}^N\bigg\langle \omega^{\otimes m},\frac1{m!}\sum_{k_1=1}^\infty\dotsm\sum_{k_m=1}^\infty(k_1+\dots+k_m)! \,(B_{k_1}\otimes\dots\otimes B_{k_m})\varphi^{(k_1+\dots+k_m)}\bigg\rangle.\notag
\end{align}
Analogously to \eqref{xrsw5uw}--\eqref{y6e6i4}, we now conclude that $\mathfrak U^{-1}$ can be extended by continuity to an operator $\mathfrak U'\in\mathcal L(\mathcal E^{\alpha}_{\mathrm{min}}(\Phi'))$.
Since $\mathfrak U'=\mathfrak U^{-1}$ on $\mathcal P(\Phi')$, we get
$\mathfrak U\mathfrak U'p=\mathfrak U'\mathfrak Up=p$
for each $p\in \mathcal P(\Phi')$.  By continuity, this implies $\mathfrak U\mathfrak U'f=\mathfrak U'\mathfrak Uf=f$ for all $f\in \mathcal E^{\alpha}_{\mathrm{min}}(\Phi')$. Therefore, $\mathfrak U\in\mathcal L(\mathcal E^{\alpha}_{\mathrm{min}}(\Phi'))$ is a bijective mapping and $\mathfrak U^{-1}=\mathfrak U'\in \mathcal L(\mathcal E^{\alpha}_{\mathrm{min}}(\Phi'))$. Thus, the statement of the theorem (hence also the statement of Corollary~\ref{tqde6}) are proved in the case of a sequence of polynomials of binomial type.

{\it Step 3}.
Let now $(S^{(n)})_{n=0}^\infty$ be a Sheffer sequence and let $(P^{(n)})_{n=0}^\infty$ be its basic sequence.  We will use the  lemma below, which  follows  from \cite{FKLO}, Theorem~6.2, the~statement (SS4) with $\omega=0$, and Corollary~6.6.  

\begin{lemma}  Let  sequences of polynomials $(P^{(n)})_{n=0}^\infty$ and $(S^{(n)})_{n=0}^\infty$  have generating functions \eqref{Eq23} and \eqref{tyqde6iq}, respectively. Then, for each $n\in\mathbb N$,
\begin{align}
S^{(n)}(\omega)&=\sum_{k=0}^n \frac{n!}{(n-k)!}\, \theta^{(k)}\odot P^{(n-k)}(\omega),\label{drs5es5w3w}\\
P^{(n)}(\omega)&= \sum_{k=0}^n\frac{n!}{(n-k)!}\,\varkappa^{(k)}\odot S^{(n-k)}(\omega),\label{te6e76i475}
\end{align}
where $\theta^{(k)},\varkappa^{(k)}\in\Phi'^{\odot k}$, $k\in\mathbb N_0$, are given through the formulas 
\begin{align}
\frac1{\rho(A(\xi))}&=\sum_{k=0}^\infty
\langle\theta^{(k)},\xi^{\otimes k}\rangle,\label{xea4tq5}\\
\rho(A(\xi))&=\sum_{k=0}^\infty
\langle\varkappa^{(k)},\xi^{\otimes k}\rangle.\label{rtdssaeew5q7}
\end{align}
\end{lemma}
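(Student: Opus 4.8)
The plan is to prove the lemma by a direct computation with the generating functions, following the pattern already used in the excerpt for binomial-type sequences. First I would establish \eqref{xea4tq5} and \eqref{rtdssaeew5q7} as \emph{definitions} of the kernels $\theta^{(k)}$ and $\varkappa^{(k)}$: since $\rho$ is a formal power series with $\rho^{(0)}=1$ and $A(\xi)=\xi+\dotsm$ has $A_1=\mathbf 1$, the composition $\rho(A(\xi))$ is a well-defined formal power series in $\xi$ with constant term $1$, hence invertible as a formal power series; this gives $\varkappa^{(k)}\in\Phi'^{\odot k}$ via \eqref{rtdssaeew5q7} and then $\theta^{(k)}\in\Phi'^{\odot k}$ via \eqref{xea4tq5}, with $\theta^{(0)}=\varkappa^{(0)}=1$. (One should note here that the relevant formal-power-series calculus on $\Phi'$ has been set up in \cite{FKLO}, so products, compositions, and inverses of such series are legitimate.)

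Next I would derive \eqref{drs5es5w3w}. Starting from the Sheffer generating function \eqref{tyqde6iq} and inserting \eqref{xea4tq5},
\begin{align*}
\sum_{n=0}^\infty\frac1{n!}\langle S^{(n)}(\omega),\xi^{\otimes n}\rangle
&=\frac{\exp[\langle\omega,A(\xi)\rangle]}{\rho(A(\xi))}
=\Bigl(\sum_{k=0}^\infty\langle\theta^{(k)},\xi^{\otimes k}\rangle\Bigr)\Bigl(\sum_{m=0}^\infty\frac1{m!}\langle P^{(m)}(\omega),\xi^{\otimes m}\rangle\Bigr),
\end{align*}
where in the last step I used the basic-sequence generating function \eqref{Eq23}, namely $\exp[\langle\omega,A(\xi)\rangle]=\sum_{m=0}^\infty\frac1{m!}\langle P^{(m)}(\omega),\xi^{\otimes m}\rangle$. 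Multiplying the two series and collecting the coefficient of $\xi^{\otimes n}$ — here one has to be slightly careful: $\langle\theta^{(k)},\xi^{\otimes k}\rangle\langle P^{(m)}(\omega),\xi^{\otimes m}\rangle=\langle\theta^{(k)}\odot P^{(m)}(\omega),\xi^{\otimes(k+m)}\rangle$ because $\xi^{\otimes(k+m)}$ is symmetric, so only the symmetrization of the tensor product survives the pairing — one obtains
\[
\frac1{n!}\langle S^{(n)}(\omega),\xi^{\otimes n}\rangle=\sum_{k=0}^n\frac1{(n-k)!}\langle\theta^{(k)}\odot P^{(n-k)}(\omega),\xi^{\otimes n}\rangle.
\]
Since a symmetric tensor is determined by its pairings with all $\xi^{\otimes n}$, $\xi\in\Phi$ (this is recorded in the excerpt), comparing coefficients yields \eqref{drs5es5w3w} after multiplying by $n!$.

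The derivation of \eqref{te6e76i475} is entirely symmetric: rewrite \eqref{tyqde6iq} as $\exp[\langle\omega,A(\xi)\rangle]=\rho(A(\xi))\cdot\sum_n\frac1{n!}\langle S^{(n)}(\omega),\xi^{\otimes n}\rangle$, use \eqref{Eq23} on the left and \eqref{rtdssaeew5q7} on the right, and compare coefficients of $\xi^{\otimes n}$ in the same way. Alternatively one can simply quote that \eqref{te6e76i475} is the inverse relation to \eqref{drs5es5w3w} at the level of the $\Phi'^{\odot\bullet}$-valued formal series, the kernels $(\varkappa^{(k)})$ and $(\theta^{(k)})$ being related by $\rho(A(\xi))\cdot\frac1{\rho(A(\xi))}=1$, i.e. $\sum_{j=0}^k\varkappa^{(j)}\odot\theta^{(k-j)}=\delta_{k,0}$; plugging \eqref{drs5es5w3w} into the right-hand side of \eqref{te6e76i475} and using this convolution identity collapses the double sum to $P^{(n)}(\omega)$.

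The only genuine subtlety — and the step I would flag as the main point requiring care rather than a real obstacle — is justifying the rearrangements of the (in general infinite) series: the series in \eqref{tyqde6iq}, \eqref{Eq23}, \eqref{edfuyr7i} are \emph{formal}, so all manipulations (Cauchy product, composition $\rho\circ A$, reindexing) must be carried out and justified within the formal-power-series framework of \cite{FKLO} rather than by analytic convergence. Since the lemma as stated is attributed to \cite{FKLO} (Theorem~6.2, statement (SS4) with $\omega=0$, together with Corollary~6.6), the cleanest route is to observe that \eqref{drs5es5w3w}--\eqref{te6e76i475} are exactly the specialization of those results to the present notation, with \eqref{xea4tq5}--\eqref{rtdssaeew5q7} recording the defining generating functions of the connection kernels; the self-contained computation above then serves as an independent check.
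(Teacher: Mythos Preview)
Your proposal is correct. The paper does not give a self-contained proof of this lemma at all: it simply states that the lemma ``follows from \cite{FKLO}, Theorem~6.2, the statement (SS4) with $\omega=0$, and Corollary~6.6,'' which is exactly the citation you record in your final paragraph. Your generating-function computation is therefore strictly more than the paper provides; it is the natural unpacking of the cited results and matches the manipulations used elsewhere in the paper (e.g.\ in deriving \eqref{ywdex} from \eqref{Eq23}), so there is no methodological divergence to discuss.
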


By Corollary \ref{e6r656e3}, there exist $\tau\in T$ and $C_6\ge0$ such that the function $\rho$ extends by continuity to a holomorphic function $\rho:U_{\tau}\to\mathbb C$, where 
$$U_{\tau}:=\{\xi\in\mathcal H_{\tau}\mid \|\xi\|_{\tau}<C_6^{-1}\}$$
 is an open neighborhood of zero in $\mathcal H_{\tau}$. Furthermore, by the definition of a quasi-holomorphic formal power series, there exists an open neighborhood of zero in $\Phi$, denoted by $V$, such that the formal power series $A(\xi)$
 determines a holomorphic function $A:V\to\mathcal H_{\tau}$. Without loss of generality, we may assume that $A(V)\subset U_\tau$. 
 Hence, the function $\rho(A(\cdot)):V\to\mathbb C$ is holomorphic. Since $A(0)=0$ and $\rho(0)=1$, this also implies that $1/\rho(A(\cdot))$ is holomorphic on a neighborhood of zero.

We define a bijective linear mapping $\widetilde{ \mathfrak S}:\mathcal P(\Phi')\to \mathcal P(\Phi')$ by 
$$\widetilde {\mathfrak S} \langle P^{(n)}(\omega),\varphi^{(n)}\rangle=\langle S^{(n)}(\omega),\varphi^{(n)}\rangle\quad \varphi^{(n)}\in\Phi^{\odot n},\ n\in\mathbb N_0.$$

For $\varphi^{(n)}\in\Phi^{\odot n}$ and $\Psi^{(k)}\in\Phi'^{\odot k}$ with $k<n$, we denote by $\langle \Psi^{(k)},\varphi^{(n)}\rangle$ the element  of $\Phi^{\odot(n-k)}$ that satisfies
$$ \big\langle \Gamma^{(n-k)},\langle \Psi^{(k)},\varphi^{(n)}\rangle\big\rangle=\langle  \Gamma^{(n-k)}\odot \Psi^{(k)},\varphi^{(n)}\rangle\quad\text{for all }\Gamma^{(n-k)}\in\Phi'^{\odot(n-k)}. $$

Let 
$p(\omega)=\sum_{n=0}^N \langle P^{(n)}(\omega),\varphi^{(n)}\rangle\in\mathcal P(\Phi')$. By \eqref{drs5es5w3w},  we get
\begin{align}
\widetilde{\mathfrak S}p(\omega)&=\varphi^{(0)}+\sum_{n=1}^N\sum_{k=0}^n\frac{n!}{k!}\,
\big\langle P^{(k)}(\omega), \langle\theta^{(n-k)},\varphi^{(n)}\rangle\big\rangle\notag\\
&=\sum_{n=0}^Nn!\,\langle\theta^{(n)},\varphi^{(n)}\rangle+\sum_{k=1}^N\bigg\langle P^{(k)}(\omega),\sum_{n=k}^N\frac{n!}{k!} \langle\theta^{(n-k)},\varphi^{(n)}\rangle\bigg\rangle.\label{tyrte6i4}
\end{align}
Let $ \vertiii{\cdot}_{\tau,l,\alpha}$ ($\tau\in  T$, $l\in \mathbb N_0$)
denote the norms on $\mathcal E^{\alpha}_{\mathrm{min}}(\Phi')$ 
as defined in Corollary~\ref{tqde6},~(ii), but associated with the 
sequence of polynomials $(P^{(n)})_{n=0}^\infty$. Let $\tau\in T$ be fixed. 
In view of \eqref{xea4tq5} and Corollary \ref{e6r656e3}, without loss of generality we may assume that, for some $C_7\ge1$, we have
$\|\theta^{(n)}\|_{-\tau}\le C_7^n$ for all  $n\in\mathbb N$. 
By \eqref{ctes5w3wq},
$$\|\varphi^{(n)}\|_\tau\le (n!)^{-1/\alpha}2^{-ln}\vertiii{p}_{\tau,l,\alpha},\quad n=0,1,\dots,N,\ l\in \mathbb N_0.$$
Hence, by \eqref{tyrte6i4}, we get for $l,l'\in\mathbb N_0$,
\begin{align}
&\vertiii{\widetilde{\mathfrak S}p}_{\tau,l,\alpha}=
\left|\sum_{n=0}^Nn!\,\langle\theta^{(n)},\varphi^{(n)}\rangle\right|+\sum_{k=1}^N (k!)^{1/\alpha}2^{lk}\left\|\sum_{n=k}^N \frac{n!}{k!}\, \langle\theta^{(n-k)},\varphi^{(n)}\rangle\right\|_\tau\notag\\
&\le \sum_{n=0}^N n!\, \|\theta^{(n)}\|_{-\tau}\|\varphi^{(n)}\|_\tau +\sum_{k=1}^N (k!)^{1/\alpha}2^{lk} \sum_{n=k}^N\frac{n!}{k!}\, \|\theta^{(n-k)}\|_{-\tau}\|\varphi^{(n)}\|_\tau\notag\\
&\le\left(\sum_{n=0}^N C_7^n  (n!)^{1-\frac1\alpha}2^{-l'n}
+\sum_{n=1}^N\sum_{k=1}^n (k!)^{\frac1\alpha-1}2^{lk}C_7^{n-k}(n!)^{1-\frac1\alpha}2^{-l'n}
\right)\vertiii{p}_{\tau,l',\alpha}\label{ctesu6w}\\
&\le \left(\sum_{n=0}^N C_7^n  2^{-l'n}
+ \sum_{n=1}^N\sum_{k=1}^n 
\left(\frac{k!}{n!}\right)^{\frac1\alpha-1}2^{lk} C_7^{n-k}2^{-l'n}
\right)\vertiii{p}_{\tau,l',\alpha}\notag\\
&\le C_8\vertiii{p}_{\tau,l',\alpha},\notag
\end{align}
where
$$C_8:=\sum_{n=0}^\infty\left(\frac{C_7}{2^{l'}}\right)^n+\sum_{n=0}^\infty n\left(\frac{2^lC_7}{2^{l'}}\right)^n<\infty$$
for sufficiently large $l'\in\mathbb N$. Hence,  $\widetilde {\mathfrak S}$  extends by continuity to $\widetilde {\mathfrak S}\in\mathcal L(\mathcal E^{\alpha}_{\mathrm{min}}(\Phi'))$.

{\it Step 4}. Finally, using formulas \eqref{te6e76i475} and \eqref{rtdssaeew5q7}, analogously to Steps 2 and 3, we prove that $\widetilde {\mathfrak S}^{-1}$  extends by continuity to a linear continuous operator on  $\mathcal E^{\alpha}_{\mathrm{min}}(\Phi')$. Therefore, $\widetilde {\mathfrak S}\in \mathcal L(\mathcal E^{\alpha}_{\mathrm{min}}(\Phi'))$ is a self-homeomorphism, hence so is $\mathfrak S=\widetilde {\mathfrak S}\mathfrak U$. 
\end{proof}

Now a natural question arises as to whether it is possible to extend the result of Theorem \ref{sew56uw} to $\alpha>1$. The following  proposition, which deals with the one-dimensional case only, immediately implies that this not always possible.

\begin{proposition} \label{6ew6u47eies}
{\rm (i)}  Let $(s_n)_{n=0}^\infty$ be a  Sheffer sequence on $\mathbb C$, and  let $\mathfrak S$ be the corresponding Sheffer operator. Assume that, for some $\alpha>1$, $\mathfrak S$  extends by continuity to $\mathfrak S\in\mathcal L(\mathcal E^{\alpha}_{\mathrm{min}}(\mathbb C))$. 
  Denote by 
\begin{equation}\label{yd6ed6}
G(u,z):=\sum_{n=0}^\infty \frac{u^n}{n!}\,s_n(z)\end{equation}
the  generating function of $(s_n)_{n=0}^\infty$. Then, for all  $(u,z)\in\mathbb C^2$, the series on the right-hand side of \eqref{yd6ed6} converges in $\mathbb C$, and $G:\mathbb C^2\to\mathbb C$ is an entire function.

{\rm (ii)} Let  $(p_n)_{n=0}^\infty$ be a   sequence of polynomials on $\mathbb C$ of binomial type, and  let $\mathfrak U$ be the corresponding umbral operator. Then, for each $\alpha>1$, $\mathfrak U$ cannot be extended by continuity to $\mathfrak U\in\mathcal L(\mathcal E^{\alpha}_{\mathrm{min}}(\mathbb C))$, unless $p_n(z)=z^n$ (in which case $\mathfrak U$ is the identity operator).
\end{proposition}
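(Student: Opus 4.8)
The plan is to exploit the explicit structure of Sheffer generating functions together with the description of $\mathcal E^{\alpha}_{\mathrm{min}}(\mathbb C)$ by the norms $\|\cdot\|_{l,\alpha}$ from Theorem~\ref{tctfdr6tqde6} (one-dimensional case, $\Phi=\mathbb C$, so the norm is $\|f\|_{l,\alpha}=\sum_n (n!)^{1/\alpha} 2^{ln}|c_n|$ for $f(z)=\sum_n c_n z^n$). For part (i), the key observation is that, for fixed $u\in\mathbb C$, the function $z\mapsto G(u,z)$ is (formally) $\mathfrak S$ applied to the exponential $z\mapsto e^{uz}=\sum_n \frac{u^n}{n!}z^n$: indeed $\mathfrak S(z^n)=s_n(z)$ and $G(u,z)=\sum_n \frac{u^n}{n!}\mathfrak S(z^n)$. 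Since $\alpha>1$, we have $1/\alpha<1$, so the exponential $e^{uz}$ lies in $\mathcal E^{1}_{\mathrm{min}}(\mathbb C)\subset \mathcal E^{\alpha}_{\mathrm{min}}(\mathbb C)$; concretely $\|e^{u\cdot}\|_{l,\alpha}=\sum_n (n!)^{1/\alpha-1}(2^l|u|)^n<\infty$ because $1/\alpha-1<0$. Applying the hypothesis that $\mathfrak S$ is continuous on $\mathcal E^{\alpha}_{\mathrm{min}}(\mathbb C)$, and that the partial sums of $\sum_n \frac{u^n}{n!}z^n$ converge to $e^{uz}$ in $\mathcal E^{\alpha}_{\mathrm{min}}(\mathbb C)$ (Corollary~\ref{trsw5y647ew}), we conclude that $\sum_n \frac{u^n}{n!}s_n(z)=\mathfrak S(e^{u\cdot})(z)$ converges in $\mathcal E^{\alpha}_{\mathrm{min}}(\mathbb C)$, hence pointwise, and defines an entire function of $z$ for each $u$. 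To upgrade this to joint entirety of $G$ on $\mathbb C^2$, I would get a quantitative bound: fix $l$; continuity of $\mathfrak S$ gives $l'$ and $C$ with $\|\mathfrak S f\|_{l,\alpha}\le C\|f\|_{l',\alpha}$, so writing $s_n(z)=\sum_{k\le n} s_{n,k}z^k$ and using that $\frac{u^n}{n!}s_n$ is the image of $\frac{u^n}{n!}z^n$, one extracts $\sum_{n,k}(k!)^{1/\alpha}2^{lk}\frac{|u|^n}{n!}|s_{n,k}|\le C\sum_n (n!)^{1/\alpha}2^{l'n}\frac{|u|^n}{n!}=C\sum_n (n!)^{1/\alpha-1}(2^{l'}|u|)^n$, which is finite and locally bounded in $u$. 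This double series dominates $\sum_{n,k}\frac{|u|^n}{n!}|s_{n,k}||z|^k$ on compacts, giving absolute and locally uniform convergence of $G(u,z)$ on $\mathbb C^2$, hence $G$ is entire.

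For part (ii), the binomial-type structure forces $G(u,z)=e^{za(u)}$ with $a(u)=\sum_{k\ge1}a_k u^k$, $a_1=1$ (this is the one-dimensional specialization of \eqref{Eq23}–\eqref{ft7r}; equivalently $p_n(z)=n!\sum_{m=1}^n \frac{1}{m!}\sum_{k_1+\dots+k_m=n}a_{k_1}\cdots a_{k_m}z^m$). If $\mathfrak U$ extended continuously to $\mathcal E^{\alpha}_{\mathrm{min}}(\mathbb C)$ for some $\alpha>1$, part (i) would force $(u,z)\mapsto e^{za(u)}$ to be entire on $\mathbb C^2$. The plan is to show this is impossible unless $a(u)\equiv u$. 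If $a$ is not the identity, then either $a$ is a non-affine entire function, or $a$ is not entire at all (its power series has finite radius of convergence). In the first case, pick $z_0\ne0$; then $u\mapsto e^{z_0 a(u)}$ is entire and, unless $a$ is a polynomial of degree $1$, $a$ (being non-constant and non-affine) takes every value infinitely often or grows — more simply, if $a$ is a polynomial of degree $d\ge2$ then $e^{z_0 a(u)}$ has order $d$ in $u$, which is fine for entirety, so this case needs more care: the actual obstruction must come from the $z$-dependence. Better: if $a$ is a polynomial $a(u)=u+a_2u^2+\dots+a_d u^d$ with $d\ge2$, reconsider whether $\mathfrak U$ is genuinely continuous on $\mathcal E^{\alpha}_{\mathrm{min}}(\mathbb C)$ by testing on a monomial-type element or a lacunary series; the point is that $p_n$ has degree $n$ but its leading behavior in $n$ of the coefficients $|p_{n,m}|$ grows too fast relative to the $(n!)^{1/\alpha}$ weights when $\alpha>1$. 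Concretely, from the formula for $p_n$, the coefficient of $z^n$ is $1$ but the coefficient of $z^m$ for small $m$ involves $n!/m!$ times a sum of products of $a_{k_i}$'s, and $\|p_n\|_{l,\alpha}=\sum_m (m!)^{1/\alpha}2^{lm}|p_{n,m}|$ can be shown to grow faster than any $(n!)^{1/\alpha}2^{l'n}$ allows — this is the quantitative heart of the non-existence.

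The hard part, and where I would concentrate effort, is part (ii): ruling out \emph{every} non-trivial binomial-type sequence, including those with $a$ a polynomial. The clean argument is probably: from part (i), continuity of $\mathfrak U$ on $\mathcal E^{\alpha}_{\mathrm{min}}(\mathbb C)$ with $\alpha>1$ implies $e^{za(u)}$ entire on $\mathbb C^2$, which forces $a$ to be entire on $\mathbb C$. Now also apply part (i) to $\mathfrak U^{-1}$ (the umbral operator of the binomial-type sequence with $a$ replaced by its compositional inverse $b$): if $\mathfrak U$ is a self-homeomorphism — which the hypothesis "extends by continuity to $\mathcal L(\mathcal E^{\alpha}_{\mathrm{min}}(\mathbb C))$" together with the bijectivity of $\mathfrak U$ on polynomials and a density argument gives, provided $\mathfrak U^{-1}$ is also continuous; if only $\mathfrak U$ continuous is assumed, one argues directly instead — then $b$ must also be entire. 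But $a(b(u))=u$ with both $a,b$ entire and $a'(0)=b'(0)=1$: if $a$ has a zero of its derivative somewhere, $b$ develops a singularity, so $a'$ is zero-free entire; combined with entirety of $b=a^{-1}$ globally, a monodromy/covering argument (or Picard-type reasoning on $a'$) forces $a$ to be a degree-one polynomial, i.e.\ $a(u)=u$, whence $p_n(z)=z^n$. To avoid assuming $\mathfrak U^{-1}$ continuous, the fallback is the direct norm estimate sketched above: compute $\|\mathfrak U z^m\|=\|p_m\|_{l,\alpha}$ for the case $a(u)=u+a_ju^j$ ($j\ge2$, $a_j\ne0$), find it is asymptotically $\gtrsim (m!)^{1/\alpha}\cdot(\text{const})^m\cdot(m!)^{c}$ with $c>0$ stemming from the $j$-fold convolution structure inflating the low-degree coefficients, contradicting the required bound $\|\mathfrak U z^m\|_{l,\alpha}\le C_{l}(m!)^{1/\alpha}2^{l'm}$. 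I would write the proof using the first (cleaner) route if the paper's standing hypotheses already grant $\mathfrak U^{-1}\in\mathcal L(\mathcal E^{\alpha}_{\mathrm{min}}(\mathbb C))$, and otherwise fall back to the explicit estimate.
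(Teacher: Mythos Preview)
Your argument for part (i) is correct and coincides with the paper's: fix $u$, note $\exp_u\in\mathcal E^{\alpha}_{\mathrm{min}}(\mathbb C)$ when $\alpha>1$, apply $\mathfrak S$, and deduce convergence of the series and entirety of $G$. The paper establishes joint entirety by noting that $G$ is separately entire and continuous, which is slightly quicker than your double-series bound, but the substance is the same.

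For part (ii) there is a genuine gap. Your preferred route needs continuity of $\mathfrak U^{-1}$ so that part (i) can be applied to the inverse sequence and force the compositional inverse $b$ to be entire. But the hypothesis of (ii) is only that $\mathfrak U\in\mathcal L(\mathcal E^{\alpha}_{\mathrm{min}}(\mathbb C))$; nothing is assumed about $\mathfrak U^{-1}$, and you cannot recover its continuity from continuity of $\mathfrak U$ alone (density plus bijectivity on polynomials does not give a continuous inverse on the completion). Your fallback route via direct coefficient estimates on $p_n$ is not worked out, and the growth claim you sketch (an extra factor $(m!)^c$ with $c>0$) is not substantiated; for instance, when $a(u)=u+a_2u^2$ the coefficients $p_{n,m}$ involve binomial-type combinatorics that do not obviously produce such a factor.

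The paper's argument for (ii) is different and avoids this difficulty entirely. From part (i) one has $a$ entire (as $a(u)=\partial_z G(u,z)\big|_{z=0}$). The key observation is that in the norm $\|\cdot\|_{l,\alpha}$ one can compute exactly
\[
\|\mathfrak U\exp_u\|_{0,\alpha}=\big\|e^{\,\cdot\, a(u)}\big\|_{0,\alpha}=\sum_{n\ge0}(n!)^{1/\alpha-1}|a(u)|^n=:\phi(|a(u)|),
\qquad
\|\exp_u\|_{l,\alpha}=\phi(2^l|u|),
\]
so the continuity bound $\|\mathfrak U f\|_{0,\alpha}\le C\|f\|_{l,\alpha}$ applied to $f=\exp_u$ becomes $\phi(|a(u)|)\le C\,\phi(2^l|u|)$. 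Since $\phi$ is strictly increasing and one easily checks $C\,\phi(2^l s)\le\phi(C's)$ for some constant $C'$ and all $s\ge1$, this forces $|a(u)|\le C'|u|$ for $|u|\ge1$. An entire function with $a(0)=0$, $a'(0)=1$ satisfying such a linear growth bound must be $a(u)=u$. This is the missing idea: translate continuity of $\mathfrak U$ on the exponentials directly into a growth restriction on $a$, without ever invoking $\mathfrak U^{-1}$.
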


\begin{proof} (i)
Fix $u\in\mathbb C$ and define $\exp_u(z):=e^{uz}=\sum_{n=0}^\infty\frac{u^n}{n!}\,z^n$. Obviously, $\exp_u\in \mathcal E^{\alpha}_{\mathrm{min}}(\mathbb C)$. Hence,
\begin{equation}\label{d5w6u43}
(\mathfrak S\exp_u)(z)=\sum_{n=0}^\infty  \frac{u^n}{n!}\,s_n(z)=G(u,z),\end{equation}
and the series converges in $\mathcal E^{\alpha}_{\mathrm{min}}(\mathbb C)$, in particular, it converges point-wise. Note that  $G(u,\cdot)\in \mathcal E^{\alpha}_{\mathrm{min}}(\mathbb C)$, in particular, $G(u,\cdot)$ is an entire function on $\mathbb C$. Furthermore, by \eqref{d5w6u43}, for each fixed $z\in\mathbb C$, $G(\cdot,z)$ is an entire function on $\mathbb C$. It is easy to see that  the function $G:\mathbb C^2\to\mathbb C$ is continuous, hence it is entire. 

(ii) Assume there exists $\alpha>1$ such that $\mathfrak U$  extends by continuity to $\mathfrak U\in\mathcal L(\mathcal E^{\alpha}_{\mathrm{min}}(\mathbb C))$. By (i), the corresponding generating function $G(u,z)=\sum_{n=0}^\infty  \frac{u^n}{n!}\,p_n(z)$ is entire on $\mathbb C^2$. Recall that 
$G(u,z)=\exp[za(u)]$. Since
$a(u)=\frac{\partial}{\partial z}\Big|_{z=0}G(u,z)$,
$a(u)$ is an entire function on $\mathbb C$. By Remark~\ref{cr5w7k854ea} and the continuity of $\mathfrak U$, there exists $l\in\mathbb N_0$ such that $\mathfrak U$ extends to $\mathfrak U\in\mathcal L(\mathbf E_l^\alpha(\mathbb C), \mathbf E_0^\alpha(\mathbb C))$. Hence, there exists $C_9>0$ such that $\|G(u,\cdot)\|_{0,\alpha}\le C_9\|\exp_u\|_{l,\alpha} $ for all $u\in\mathbb C$ (we obviously dropped $\tau$ from the notation $\|\cdot\|_{\tau,l,\alpha}$). Therefore, 
\begin{equation}\label{q22evgfgf}
\sum_{n=0}^\infty(n!)^{\frac1\alpha-1}|a(u)|^n\le C_9\sum_{n=0}^\infty(n!)^{\frac1\alpha-1}\,2^{ln}|u|^n.
\end{equation}
For $s\in[0,\infty)$, define $\phi(s):=\sum_{n=0}^\infty(n!)^{\frac1\alpha-1}s^n$. Then, inequality \eqref{q22evgfgf} becomes 
\begin{equation}\label{tw5u36i7}
\phi(|a(u)|)\le C_9\phi(2^l|u|).\end{equation}
As easily seen, there exists a constant $C_{10}>0$ such that
$C_9\phi(2^ls)\le \phi(C_{10}s)$ for all $s\ge1$.
Hence, \eqref{tw5u36i7} implies
\begin{equation}\label{rtw52}
\phi(|a(u)|) \le \phi(C_{10}|u|)\quad\text{for }|u|\ge1.\end{equation}
Since the function $\phi:[0,\infty)\to\mathbb R$ is monotone increasing, formula \eqref{rtw52} implies that
$|a(u)|\le C_{10}|u|$ for $|u|\ge1$. Because $a(u)$ is an entire function, $a(0)=0$ and $a'(0)=1$, this implies that $a(u)=u$.
\end{proof}

In the case of an Appell sequence, we will now find a sufficient condition for the corresponding Sheffer operator $\mathfrak S$ to extend by continuity to a linear 
self-homeomorphism of  $\mathcal E^{\alpha}_{\mathrm{min}}(\Phi')$ for $\alpha>1$.  

\begin{theorem}\label{ft7r8o5}
Let $(S^{(n)})_{n=0}^\infty$ be an Appell sequence with generating function \eqref{tedf66iq}, and let 
$\beta\in(1,\infty)$.  Assume that there exist $ \tau\in T$ and a constant $C_{11}\ge1$ such that
\begin{equation}\label{cytd7i56e}
\|\theta^{(n)}\|_{-\tau}\le C_{11}^{n}(n!)^{\frac1\beta-1},\quad \|\rho^{(n)}\|_{- \tau}\le C_{11}^{n}(n!)^{\frac1\beta-1},\quad n\in\mathbb N,
\end{equation}
where $\theta^{(n)}$ and $\rho^{(n)}$ are given by \eqref{xea4tq5} (with $A(\xi)=\xi$) and  \eqref{edfuyr7i}, respectively. Then, for each  $\alpha\in[0,\beta]$, the corresponding Sheffer operator $\mathfrak S$ extends
by continuity to a linear self-homeomorphism of $\mathcal E^{\alpha}_{\mathrm{min}}(\Phi')$. 
\end{theorem}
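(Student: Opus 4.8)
The plan is to mimic the structure of the proof of Theorem~\ref{sew56uw}, but with the binomial-type ``umbral'' factor $\mathfrak U$ now being the identity (since $A(\xi)=\xi$), so that the Sheffer operator $\mathfrak S$ equals $\widetilde{\mathfrak S}$ and the whole argument reduces to the analogue of Step~3 with a modified bookkeeping of factorials dictated by $\beta$. Concretely, since $(S^{(n)})_{n=0}^\infty$ is Appell, formula~\eqref{drs5es5w3w} with $P^{(n-k)}(\omega)=\omega^{\otimes(n-k)}$ gives $S^{(n)}(\omega)=\sum_{k=0}^n\frac{n!}{(n-k)!}\,\theta^{(k)}\odot\omega^{\otimes(n-k)}$, and, as in \eqref{tyrte6i4}, for $p(\omega)=\sum_{n=0}^N\langle\omega^{\otimes n},\varphi^{(n)}\rangle$ we obtain
\begin{equation*}
\mathfrak S p(\omega)=\sum_{n=0}^N n!\,\langle\theta^{(n)},\varphi^{(n)}\rangle+\sum_{k=1}^N\Big\langle\omega^{\otimes k},\sum_{n=k}^N\frac{n!}{k!}\,\langle\theta^{(n-k)},\varphi^{(n)}\rangle\Big\rangle.
\end{equation*}
First I would fix $\tau'\in T$ (take $\tau'=\tau$ from the hypothesis, enlarging it if needed) and estimate $\|\mathfrak S p\|_{\tau,l,\alpha}$ in the norm \eqref{yufwrq76rf} in terms of $\|p\|_{\tau,l',\alpha}$, using the bound $\|\theta^{(n)}\|_{-\tau}\le C_{11}^n(n!)^{1/\beta-1}$ from \eqref{cytd7i56e} together with the elementary inequality $\|\varphi^{(n)}\|_\tau\le (n!)^{-1/\alpha}2^{-l'n}\|p\|_{\tau,l',\alpha}$.

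The key point making the case $\alpha\le\beta$ work is the submultiplicativity of factorials: $n!\le \binom{n}{k}k!(n-k)!$, hence $(n!)^{1/\alpha-1}$ can be controlled once we borrow the factor $(n!)^{1/\beta-1}$ carried by $\theta^{(n-k)}$ — indeed $1/\alpha-1-(1/\beta-1)=1/\alpha-1/\beta\ge0$, so after the substitution the remaining power of $(n!)$ relative to $(k!)$ is nonnegative and can be bounded using $\binom{n}{k}\le 2^n$ (or, more carefully, $\binom{n}{k}^{1/\alpha-1/\beta}$ together with a geometric factor). Collecting the geometric series in $n-k$ and then in $k$ exactly as in \eqref{dgyder67}--\eqref{y6e6i4} yields $\|\mathfrak S p\|_{\tau,l,\alpha}\le C\,\|p\|_{\tau,l',\alpha}$ for $l'$ large enough, so by Theorem~\ref{tctfdr6tqde6} and Remark~\ref{cr5w7k854ea} the operator $\mathfrak S$ extends by continuity to $\mathfrak S\in\mathcal L(\mathcal E^{\alpha}_{\mathrm{min}}(\Phi'))$. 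The role of the second inequality in \eqref{cytd7i56e}, on $\|\rho^{(n)}\|_{-\tau}$, is to handle the inverse operator: since $\rho(0)=1$, the coefficients $\varkappa^{(n)}$ of $\rho(\xi)$ in \eqref{rtdssaeew5q7} (with $A(\xi)=\xi$, so $\varkappa^{(n)}=\rho^{(n)}$) satisfy the same type of bound, and $\mathfrak S^{-1}$ is given by the analogue of \eqref{te6e76i475}, namely $\omega^{\otimes n}=\sum_{k=0}^n\frac{n!}{(n-k)!}\rho^{(k)}\odot S^{(n-k)}(\omega)$; repeating the estimate verbatim with $\theta^{(k)}$ replaced by $\rho^{(k)}$ gives $\mathfrak S^{-1}\in\mathcal L(\mathcal E^{\alpha}_{\mathrm{min}}(\Phi'))$. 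Since $\mathfrak S\mathfrak S^{-1}=\mathfrak S^{-1}\mathfrak S=\mathrm{id}$ on the dense subset $\mathcal P(\Phi')$, continuity extends this to all of $\mathcal E^{\alpha}_{\mathrm{min}}(\Phi')$, so $\mathfrak S$ is a self-homeomorphism. Finally, the case $\alpha=0$ follows from \eqref{tye64w3w} together with the case $\alpha\in(0,\beta]$, since $\mathcal E^{0}_{\mathrm{min}}(\Phi')$ is the projective limit over $\alpha>0$.

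The step I expect to be the main obstacle is the factorial/combinatorial estimate in the double sum: one must verify carefully that, when $1<\alpha\le\beta$, the exponents work out so that after using \eqref{cytd7i56e} the summand is bounded by a product of a geometric series in $n-k$ (with ratio $C_{11}2^{-l'}$, summable for $l'$ large) and a geometric-times-polynomial series in $k$ (with ratio $2^{l}C_{11}2^{-l'}$), with no residual growing factorial. The inequality to nail down is $(n!)^{1/\alpha-1}\le \big((n-k)!\big)^{1/\beta-1}\big(k!\big)^{1/\alpha-1}\,\binom{n}{k}^{\,1-1/\alpha}\cdot(\text{harmless})$, or an even cruder bound of the same flavour; getting the constant $C_8$ finite uniformly in $N$ is where the condition $\alpha\le\beta$ is genuinely used, and it is worth writing this inequality out explicitly rather than waving at ``analogously to Step~3''. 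Everything else — the passage from polynomials to the projective limit, the invertibility argument, and the reduction of $\alpha=0$ — is routine given Theorem~\ref{tctfdr6tqde6} and Remark~\ref{cr5w7k854ea}.
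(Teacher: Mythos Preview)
Your proposal is correct and follows essentially the same route as the paper's proof: reduce to Step~3 of Theorem~\ref{sew56uw} (since $\mathfrak U=\mathrm{id}$ for an Appell sequence), plug the bound \eqref{cytd7i56e} into the analogue of \eqref{ctesu6w}, and control the resulting factorial combination by $\binom{n}{k}^{1-1/\alpha}\le\binom{n}{k}$, then sum $\sum_k\binom{n}{k}=2^n$ into a geometric series; the inverse is handled identically with $\rho^{(n)}$ in place of $\theta^{(n)}$.

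The one simplification the paper makes that you do not is to observe at the outset that the right-hand side of \eqref{cytd7i56e} is decreasing in $\beta$, so the hypothesis for a given $\beta$ implies the same hypothesis with $\beta$ replaced by any $\alpha\in(0,\beta]$; hence it suffices to run the estimate once, for $\alpha=\beta$, which makes the bookkeeping cleaner because the stray factor $((n-k)!)^{1/\beta-1/\alpha}$ becomes exactly~$1$ rather than merely $\le 1$. Also, your displayed ``inequality to nail down'' has the exponent on $n!$ reversed: what actually appears in the estimate is $(k!)^{1/\alpha-1}((n-k)!)^{1/\beta-1}(n!)^{1-1/\alpha}$, and this equals $((n-k)!)^{1/\beta-1/\alpha}\binom{n}{k}^{1-1/\alpha}\le\binom{n}{k}^{1-1/\alpha}$ --- once you write it this way the rest is immediate.
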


\begin{remark}
Note that, in the limiting case $\beta=1$, the estimates \eqref{cytd7i56e} become $\|\theta^{(n)}\|_{-\tau}\le C_{11}^{n}$ and $\|\rho^{(n)}\|_{-\tau}\le C_{11}^{n}$, which means that the function $\rho(\cdot)$ is holomorphic on a neighborhood of zero in $\Phi$.
\end{remark}

\begin{proof}[Proof of Theorem \ref{ft7r8o5}] 
Note that, when $\beta$ is increasing, the right-hand side of the inequalities \eqref{cytd7i56e} is decreasing. Therefore, it is sufficient to prove the statement of the theorem only for the space $\mathcal E^{\alpha}_{\mathrm{min}}(\Phi')$ when $\alpha=\beta$. To this end, we just need to slightly modify Steps 3 and 4 of the proof of Theorem \ref{sew56uw}.
We will use the notation from that proof. So, analogously to \eqref{ctesu6w}, we obtain 
\begin{align}
\|\mathfrak Sp\|_{\tau,l,\alpha}\notag&\le\bigg(\sum_{n=0}^N C_{11}^n(n!)^{1+\frac1\alpha-1-\frac1\alpha}  2^{-l'n}\notag\\
&\quad
+\sum_{n=1}^N\sum_{k=1}^n (k!)^{\frac1\alpha-1}2^{lk}C_{11}^{n-k}
\big((n-k)!\big)^{\frac1\alpha-1}
(n!)^{1-\frac1\alpha}2^{-l'n}\bigg)\|p\|_{\tau,l',\alpha}\notag\\
&\le\left(\sum_{n=0}^N C_{11}^n  2^{-l'n}+\sum_{n=1}^N \bigg(\frac{2^lC_{11}}{2^{l'}}\bigg)^n\sum_{k=1}^n\binom nk^{1-\frac1\alpha}\right)\|p\|_{\tau,l',\alpha}\notag\\
&\le\left(\sum_{n=0}^N C_{11}^n  2^{-l'n}+\sum_{n=1}^N \bigg(\frac{2^lC_{11}}{2^{l'}}\bigg)^n \sum_{k=1}^n\binom nk\right)\|p\|_{\tau,l',\alpha}\notag\\
&\le\left(\sum_{n=0}^N C_{11}^n  2^{-l'n}+\sum_{n=1}^N \bigg(\frac{2^{l+1}C_{11}}{2^{l'}}\bigg)^n \right)\|p\|_{\tau,l',\alpha}\notag\\
&=C_{12}\|p\|_{\tau,l',\alpha},\notag
\end{align}
where $C_{12}<\infty$ for a sufficiently large $l'\in\mathbb N_0$. Hence,  $\mathfrak S$ extends by continuity to $\mathfrak S\in\mathcal L(\mathcal E^{\alpha}_{\mathrm{min}}(\Phi'))$. The rest of the proof is similar to Step 4 of the proof of Theorem~\ref{sew56uw}.
\end{proof}

Let us now apply our results to the finite-dimensional setting. Let $d\in\mathbb N$ and choose
$\Phi=\mathcal H_0=\Phi'=\mathbb C^d$.
In particular, the index set $T$ contains only one element.  Note also the result from the multivariate complex analysis that is recalled in Remark~\ref{cxtew55u}. Hence, we get the following  corollary.

\begin{corollary}\label{6u14we145crsd}
 {\rm (i)} Let $(S^{(n)})_{n=0}^\infty$ be a Sheffer sequence on $\mathbb C^d$ with the generating function~\eqref{tyqde6iq}. 
Assume that there exists $U$, an open neighborhood of zero in $\mathbb C^d$, such that the functions $A:U\to\mathbb C^d$ and $\rho:U\to\mathbb C$ are holomorphic. Then, for each $\alpha\in[0,1]$, the corresponding Sheffer operator $\mathfrak S: \mathcal P(\mathbb C^d)\to \mathcal P(\mathbb C^d)$ extends
by continuity to a linear self-homeomorphism of $\mathcal E^{\alpha}_{\mathrm{min}}(\mathbb C^d)$.

{\rm (ii)} Let $(S^{(n)})_{n=0}^\infty$ be an Appell sequence on $\mathbb C^d$ with generating function \eqref{tedf66iq}. Assume that there exist $\beta\in(1,\infty)$ and a constant $C_{13}\ge1$ such that 
\begin{equation}\label{twruqtddfrq6}
\|\theta^{(n)}\|\le C_{13}^{n}(n!)^{\frac1\beta-1},\quad \|\rho^{(n)}\|\le C_{13}^{n}(n!)^{\frac1\beta-1},\quad n\in\mathbb N,
\end{equation}
where $\theta^{(n)}$ and $\rho^{(n)}$ are given by \eqref{xea4tq5} (with $A(\xi)=\xi$) and  \eqref{edfuyr7i}, respectively, and $\|\cdot\|$ denotes the norm on $\mathbb C^d$. Then, for each $\alpha\in[0,\beta]$, the corresponding Sheffer operator $\mathfrak S$ extends
by continuity to a linear self-homeomorphism of $\mathcal E^{\alpha}_{\mathrm{min}}(\mathbb C^d)$.

\end{corollary}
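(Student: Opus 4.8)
The plan is to observe that the corollary is a direct specialization of Theorems~\ref{sew56uw} and~\ref{ft7r8o5} once the hypotheses are matched, the only genuine point being the reduction of ``quasi-holomorphic'' to ``holomorphic'' in finite dimensions together with the holomorphic inverse function theorem. With $\Phi=\mathcal H_0=\Phi'=\mathbb C^d$, the index set $T$ is a singleton, say $T=\{\ast\}$, and $\mathcal H_\tau=\mathcal H_{-\tau}=\mathbb C^d$ for the unique $\tau$; all symmetric tensor powers $\mathbb C^d{}^{\odot n}$ are finite-dimensional Hilbert spaces, and the spaces $\mathcal E^\alpha_{\mathrm{min}}(\mathbb C^d)$ are exactly the ones defined in Subsection~\ref{cdrt6ue46} for this trivial triple. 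I would first record the following elementary equivalence: a formal power series $F(\xi)=F(0)+\sum_{n\ge1}A_n\xi^{\otimes n}$ with $A_n\in\mathcal L(\mathbb C^d{}^{\odot n},\mathbb C^d)$ is quasi-holomorphic on a neighborhood of zero (condition (QH), which here involves the single index $\tau$) if and only if it defines a holomorphic $\mathbb C^d$-valued function on some ball around zero. One implication is Corollary~\ref{tuqdftqq}; the converse follows from the Cauchy estimates applied to the Taylor coefficients of a holomorphic function on a polydisc, which give $\|A_n\|\le C^n$ for a suitable $C$. The same remark applies to scalar-valued $\rho$: holomorphy on a neighborhood of zero in $\mathbb C^d$ is exactly the hypothesis appearing in Theorem~\ref{sew56uw}.

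For part (i), it then remains to verify that the compositional inverse $B(\xi)$ of $A(\xi)$ is quasi-holomorphic. Here I would invoke precisely the fact recalled in Remark~\ref{cxtew55u}: since $A:U\to\mathbb C^d$ is holomorphic with $A(0)=0$ and with differential at $0$ equal to $A_1=\mathbf 1$, the holomorphic inverse function theorem provides an open neighborhood of zero on which $A$ is biholomorphic, with a holomorphic inverse $A^{-1}$. Expanding $A^{-1}$ in its Taylor series at $0$ yields a formal power series with linear part $\mathbf 1$ that satisfies $A(A^{-1}(\xi))=A^{-1}(A(\xi))=\xi$, hence coincides with the formal compositional inverse $B(\xi)$ of Remark~\ref{dr65w83s}. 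Thus $B$ is holomorphic on a neighborhood of zero, and by the equivalence above it is quasi-holomorphic. All hypotheses of Theorem~\ref{sew56uw} are now met, and its conclusion gives statement~(i).

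For part (ii), the conditions~\eqref{twruqtddfrq6} are literally the conditions~\eqref{cytd7i56e} of Theorem~\ref{ft7r8o5} written out for the unique index $\tau\in T$ (with $\|\cdot\|_{-\tau}=\|\cdot\|$ the norm on $\mathbb C^d$), so Theorem~\ref{ft7r8o5} applies verbatim and yields~(ii). There is essentially no serious obstacle: the one place requiring care is the passage from holomorphy of $A$ to quasi-holomorphy of its compositional inverse $B$, which is exactly the step that is available in finite dimensions via the classical holomorphic inverse function theorem but, as noted in Remark~\ref{cxtew55u}, remains open in the general infinite-dimensional setting. I would therefore state this reduction explicitly as the heart of the argument, with the remainder being a routine unwinding of definitions.
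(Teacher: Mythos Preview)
Your proposal is correct and follows precisely the paper's own route: specialize the Gel'fand triple to $\Phi=\mathcal H_0=\Phi'=\mathbb C^d$ with $T$ a singleton, invoke the holomorphic inverse function theorem (Remark~\ref{cxtew55u}) to obtain quasi-holomorphy of $B$, and then apply Theorems~\ref{sew56uw} and~\ref{ft7r8o5} directly. If anything, your write-up is more detailed than the paper's one-sentence justification preceding the corollary.
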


\section{Examples}\label{a43wq7}

We will now consider some examples of application of our results. 

\subsection{Hermite polynomials on $\mathbb C^d$}\label{cykreri64}

 Let $\mathcal C:\R^d\to\R^d$ be a symmetric, strictly positive linear operator. 
Let $\mu$ denote the centered Gaussian measure on $\R^d$ with correlation operator $\mathcal C$:
$$\int_{\R^d}e^{i\langle\omega,\xi\rangle}\,d\mu(\omega)=\exp\bigg[-\frac12\, \langle \mathcal C\xi,\xi\rangle\bigg],\quad\xi\in\R^d.$$
We extend $\mathcal C$ by linearity to a linear operator on $\mathbb C^d$, and consider the entire function $\rho:\mathbb C^d\to\mathbb C$ given by
\begin{equation}\label{dr6e8}
\rho(\xi)=\exp\bigg[\frac12\, \langle \mathcal C\xi,\xi\rangle\bigg].\end{equation}
Let  $(S^{(n)})_{n=0}^\infty$ be the corresponding Appell sequence on $\mathbb C^d$ with the generating function~\eqref{tedf66iq}.
Then $(S^{(n)})_{n=0}^\infty$ is a sequence of {\it multivariate Hermite polynomials}. In particular, for any $m,n\in\mathbb N_0$, $f^{(m)}\in(\mathbb R^d)^{\odot m}$, $g^{(n)}\in(\mathbb R^d)^{\odot n}$, one has
$$\int_{\R^d} \langle S^{(m)}(\omega),f^{(m)}\rangle \langle S^{(n)}(\omega),g^{(n)}\rangle\,d\mu(\omega)=\delta_{m,n}n!\,(\mathcal C^{\otimes n}f^{(n)},g^{(n)})_{(\R^d)^{\odot n}}\,,$$
where $\delta_{m,n}$ is the Kronecker symbol, see e.g.\ \cite[Chapter 2, Section 2.2]{BK}

Let $\tilde \Delta\in(\mathbb C^d)^{\otimes 2}$ be given by
$\langle\tilde\Delta,\xi_1\otimes \xi_2\rangle=\langle\mathcal C\xi_1,\xi_2\rangle$
for all $\xi_1,\xi_2\in\mathbb C^d$.
(In fact, $\tilde\Delta\in(\R^d)^{\otimes 2}\subset (\mathbb C^d)^{\otimes 2}$.) Let $\Delta\in(\mathbb C^d)^{\odot 2}$ denotes the symmetrization of $\tilde\Delta$. In particular, for all $\xi\in\mathbb C^d$,
$\langle\Delta,\xi^{\otimes 2}\rangle=\langle\mathcal C\xi,\xi\rangle
$. Then, by \eqref{dr6e8}, 
\begin{equation}\label{s1w71wrs}
\rho(\xi)=1+\sum_{k\in\mathbb N,\, \text{$k$ even}}
\frac1{(k/2)!\, 2^{k/2}}\langle \Delta^{\odot(k/2)},\xi^{\otimes k}\rangle,\quad \xi\in\mathbb C^d,
\end{equation}
and similarly
\begin{equation}\label{r6es}
\frac1{\rho(\xi)}=1+\sum_{k\in\mathbb N,\, \text{$k$ even}}
\frac{(-1)^{k/2}}{(k/2)!\, 2^{k/2}}\langle \Delta^{\odot(k/2)},\xi^{\otimes k}\rangle,\quad \xi\in\mathbb C^d.\end{equation}
It easily follows from \eqref{s1w71wrs} and \eqref{r6es} that the functions $\rho(\xi)$ and $1/\rho(\xi)$ satisfy condition \eqref{twruqtddfrq6} with $\beta=2$. Hence, by Corollary \ref{6u14we145crsd}, (ii), {\it  for each $\alpha\in[0,2]$, the corresponding Sheffer operator $\mathfrak S$ extends by continuity to a linear self-homeomorphism of $\mathcal E^{\alpha}_{\mathrm{min}}(\mathbb C^d)$}.

\subsection{Infinite dimensional Hermite polynomials}
The above considerations related to the  multivariate Hermite polynomials admit a straightforward generalization to the case of a general Gel'fand triple \eqref{fd6re7itfrde}. Let $\tau\in T$ and let $\mathcal C\in\mathcal L(\mathcal H_{\tau,\R},\mathcal H_{-\tau,\R})$ be such that, for each $\xi\in\mathcal H_{\tau,\R}$, $\xi\ne0$, we have $\langle \mathcal C\xi,\xi\rangle>0$. Let $\mu$ denote the Gaussian measure on $\Phi_{\R}'$ (equipped with the cylinder $\sigma$-algebra) with correlation operator $\mathcal C$:
$$\int_{\Phi_{\R}'}e^{i\langle\omega,\xi\rangle}\,d\mu(\omega)=\exp\bigg[-\frac12\, \langle \mathcal C\xi,\xi\rangle\bigg],\quad\xi\in\Phi_\R$$
(see e.g.\ \cite[Chapter 2, Section 1.9]{BK}). Similarly to Subsection \ref{cykreri64}, we define an entire function $\rho:\Phi\to\mathbb C$ by formula \eqref{dr6e8}. Let  $(S^{(n)})_{n=0}^\infty$ be the corresponding Appell sequence on $\Phi'$ with generating function \eqref{tedf66iq}.
Then $(S^{(n)})_{n=0}^\infty$ is a sequence of {\it Hermite polynomials on $\Phi'$}. In particular, for any $m,n\in\mathbb N_0$, $f^{(m)}\in\Phi_{\R}^{\odot m}$, $g^{(n)}\in\Phi_{\R}^{\odot n}$, one has 
$$\int_{\Phi_{\R}'} \langle S^{(m)}(\omega),f^{(m)}\rangle \langle S^{(n)}(\omega),g^{(n)}\rangle\,d\mu(\omega)=\delta_{m,n}n!\,\langle\mathcal C^{\otimes n}f^{(n)},g^{(n)}\rangle\,.$$

Consider the continuous bilinear form $\langle \mathcal C\xi_1,\xi_2\rangle$ on $\mathcal H_\tau^2$. Let $\tau'\in T$ be such that $\mathcal H_{\tau'}\subset\mathcal H_\tau$ and the embedding operator is of Hilbert--Schmidt class. Then, by the Kernel Theorem, there exists $\tilde \Delta\in\mathcal H_{-\tau'}^{\otimes 2}$ satisfying $\langle\tilde\Delta,\xi_1\otimes \xi_2\rangle=\langle\mathcal C\xi_1,\xi_2\rangle$ for all $\xi_1,\xi_2\in\mathcal H_{\tau'}$. Let $\Delta\in \mathcal H_{-\tau'}^{\odot 2}$ denote the symmetrization of $\tilde\Delta$. In particular, for all $\xi\in\Phi$, we have $\langle\Delta,\xi^{\otimes 2}\rangle=\langle\mathcal C\xi,\xi\rangle
$. Hence, analogously to Subsection~\ref{cykreri64}, we conclude from Theorem~\ref{ft7r8o5}, that, {\it for each $\alpha\in[0,2]$,  the corresponding Sheffer operator $\mathfrak S$ extends by continuity to a linear self-homeomorphism of $\mathcal E^{\alpha}_{\mathrm{min}}(\Phi')$}.

For $\alpha=2$, this result was shown in \cite[Chapter 2, Section 5.4]{BK} and, in the case of the standard Gaussian measure ($\mathcal C=\mathbf 1$), it is a fundamental result of (Gaussian) white noise analysis, see also \cite{Lee,KPS,MY}. It gives an internal description of the space of test functions whose dual space is called the {\it space of Hida distributions}. For $\alpha=1$ and $\mathcal C=\mathbf 1$, this result was shown in \cite[Theorem~9]{KSW}. In this case, it gives an inner description of the space of test functions whose dual space is nowadays called the {\it space of Kondratiev distributions}.

\subsection{Lifted Sheffer sequences}

Let $\mathcal H_{0,\R}=L^2(\mathbb R^d,dx)$, the real $L^2$-space on $\mathbb R^d$, so that $\mathcal H_0=L^2(\mathbb R^d\to\mathbb C,dx)$. By the nuclear space $\Phi$ we will now understand either $\mathcal D$, the space  of all complex-valued smooth functions on $\R^d$ with compact support, or $\mathcal S$, the Schwartz space of complex-valued rapidly decreasing  smooth functions on $\R^d$.

Let us now briefly recall the construction of lifting of a Sheffer sequence on $\mathbb C$ to a Sheffer sequence on $\Phi'=\text{$\mathcal D'$ or $\mathcal S'$}$, see \cite{FKLO} for details. For each $k\in\mathbb N$, we define an operator $\mathbb D_k\in\mathcal L(\Phi^{\odot k},\Phi)$ by
\begin{equation}\label{sew46u}
(\mathbb D_kf^{(k)})(x):=f^{(k)}(x,\dots,x),\quad f^{(k)}\in\Phi^{\odot n},\ x\in\mathbb R^d\end{equation}
($\mathbb D_1$ being the identity operator on  $\Phi$). In particular, for $\xi\in\Phi$, we get $\mathbb D_k\xi^{\otimes k}=\xi^k$.

Let $(s_n)_{n=0}^\infty$ be a Sheffer sequence on $\mathbb C$ with generating function \eqref{xer5w5y43}. We write the generating function in the form 
$$\sum_{n=0}^\infty \frac{u^n}{n!}\, s_n(z)=\exp\big[za(u)-c(a(u))\big],$$
where $c(u):= \log r(u)$. We write  $a(u)=\sum_{k=1}^\infty a_ku^k$ with $a_1=1$ and $c(u)=\sum_{k=1}^\infty c_ku^k$.

Let $(S^{(n)})_{n=0}^\infty$  be a Sheffer sequence on $\Phi'$. We say that $(S^{(n)})_{n=0}^\infty$ is the {\it lifting of the Sheffer sequence $(s_n)_{n=0}^\infty$} if the generating function of $(S^{(n)})_{n=0}^\infty$ is of the form 
$$\sum_{n=0}^\infty\frac{1}{n!}\langle S^{(n)}(\omega),\xi^{\otimes n}\rangle
=\exp\big[\langle\omega,A(\xi)\rangle-C(A(\xi))\big], $$
 where
\begin{align}
A(\xi)&=\sum_{k=1}^\infty a_k\mathbb D_k\xi^{\otimes k}=\sum_{k=1}^\infty a_k\xi^k=a(\xi),\label{dse5w5}\\
C(\xi)&=\sum_{k=1}^\infty c_k\int_{\mathbb R^d} (\mathbb D_k\xi^{\otimes k})(x)\,dx= \sum_{k=1}^\infty c_k \int_{\mathbb R^d} \xi^{k}(x)\,dx= \int_{\mathbb R^d} c(\xi(x))\,dx.\label{e5w5qy3wdd}
\end{align}

\begin{proposition}\label{xresw5yq}
Let $(s_n)_{n=0}^\infty$ be a Sheffer sequence on $\mathbb C$ with generating function \eqref{xer5w5y43}. Assume that functions $a(u)$ and $r(u)$ are holomorpohic on a neighborhood of zero in $\mathbb C$ (i.e., the assumption of Theorem~\ref{xtew4u} is satisfied). Let $(S^{(n)})_{n=0}^\infty$ be the lifting of the Sheffer sequence  $(s_n)_{n=0}^\infty$ to a Sheffer sequence on $\Phi'=\text{$\mathcal D'$ or $\mathcal S'$}$. Then $(S^{(n)})_{n=0}^\infty$ satisfies the assumptions of 
Theorem~\ref{sew56uw}, and hence, for each  $\alpha\in[0,1]$, the corresponding Sheffer operator $\mathfrak S$  extends
by continuity to a linear self-homeomorphism of $\mathcal E^{\alpha}_{\mathrm{min}}(\Phi')$.
\end{proposition}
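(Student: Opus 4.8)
The plan is to check the three hypotheses of Theorem~\ref{sew56uw} for the lifted sequence. By the construction of the lifting (formulas \eqref{dse5w5}--\eqref{e5w5qy3wdd}), the generating data of $(S^{(n)})_{n=0}^\infty$ are $A(\xi)=a(\xi)$ acting pointwise, so that $A_k=a_k\mathbb D_k$ with $\mathbb D_k$ as in \eqref{sew46u}, together with $\rho(\eta)=\exp[C(\eta)]$, where $C(\eta)=\int_{\mathbb R^d}c(\eta(x))\,dx$ and $c:=\log r$. Hence I must show: (i) the formal power series $A$ is quasi-holomorphic on a neighbourhood of zero in $\Phi$; (ii) its compositional inverse $B$ is quasi-holomorphic on a neighbourhood of zero in $\Phi$; (iii) $\rho$ is holomorphic on a neighbourhood of zero in $\Phi$. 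Everything rests on a single estimate: for every $\tau\in T$ there are $\tau'\in T$ and $C\ge0$ with $\|\mathbb D_k\|_{\mathcal L(\mathcal H_{\tau'}^{\odot k},\mathcal H_\tau)}\le C^k$ for all $k\in\mathbb N$. This is where the concrete Sobolev-type structure of the Hilbert chains defining $\mathcal D$ and $\mathcal S$ enters, and it can be extracted from the analysis in \cite{FKLO}; I expect this geometric bound on the diagonal operators $\mathbb D_k$ to be the only genuinely technical point.

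Granting that bound, (i) and (ii) are short. Since $a$ is holomorphic near $0$, Cauchy's estimates give $|a_k|\le MR^{-k}$ for some $M,R>0$, so $\|A_k\|_{\mathcal L(\mathcal H_{\tau'}^{\odot k},\mathcal H_\tau)}=|a_k|\,\|\mathbb D_k\|_{\mathcal L(\mathcal H_{\tau'}^{\odot k},\mathcal H_\tau)}\le M(C/R)^k$, which is condition (QH) after absorbing constants into the base. For (ii) I would invoke the classical one-dimensional inverse function theorem: since $a_0=0$, $a_1=1$ and $a$ is holomorphic near $0$, its compositional inverse $b(u)=\sum_{k\ge1}b_ku^k$ is holomorphic near $0$ with $b_1=1$; and since $a(b(\xi(x)))=\xi(x)$ pointwise, $B$ is precisely the lifting of $b$, i.e.\ $B_k=b_k\mathbb D_k$, so the argument for (i) applies verbatim with $a$ replaced by $b$. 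In particular the difficulty flagged in Remark~\ref{cxtew55u} is side-stepped here: I never need that quasi-holomorphy of $A$ forces that of $B$, only that the one-dimensional compositional inverse $b$ is holomorphic near $0$.

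For (iii) the idea is to show first that $C$ is holomorphic near zero and then that $\rho=\exp\circ C$ inherits this. As $r(0)=1$, the function $c=\log r$ is holomorphic near $0$ with $c_0=0$, so $|c_k|\le M'\kappa^{-k}$ for some $M',\kappa>0$. Let $\ell\in\Phi'$ be the functional $g\mapsto\int_{\mathbb R^d}g(x)\,dx$ (the constant distribution, which belongs to $\mathcal D'$ and to $\mathcal S'$, hence to some $\mathcal H_{-\tau_1}$); then $\int_{\mathbb R^d}\eta(x)^k\,dx=\langle\mathbb D_k^*\ell,\eta^{\otimes k}\rangle$, and choosing $\tau_1'$ and $C_1$ adapted to $\tau_1$ as above we get $\mathbb D_k^*\ell\in\mathcal H_{-\tau_1'}^{\odot k}$ with $\|c_k\,\mathbb D_k^*\ell\|_{-\tau_1'}\le M'\|\ell\|_{-\tau_1}(C_1/\kappa)^k$. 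Thus $C(\eta)=\sum_{k\ge1}\langle c_k\mathbb D_k^*\ell,\eta^{\otimes k}\rangle$ converges absolutely and uniformly on a ball of $\mathcal H_{\tau_1'}$, and, being a uniform limit of continuous polynomials there, it is holomorphic on a neighbourhood of zero in $\Phi$. One then concludes that $\rho=\exp\circ C$ is holomorphic on a neighbourhood of zero in $\Phi$, either by composition of holomorphic maps, or by expanding $\exp\circ C=\sum_{m\ge0}C^m/m!$ and using that symmetrisation is a contraction in the Hilbert tensor norms to obtain $\|\rho^{(n)}\|_{-\tau_1'}\le(2M'\|\ell\|_{-\tau_1}C_1/\kappa)^n$. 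With (i)--(iii) in hand, Theorem~\ref{sew56uw} yields the claim; the remaining bookkeeping — that $\ell$ indeed lies in the relevant $\mathcal H_{-\tau_1}$ for $\Phi'=\mathcal D'$ and $\mathcal S'$, and that the various series and compositions converge in the stated topologies — is routine once the geometric bounds above are established.
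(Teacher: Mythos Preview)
Your proposal is correct and follows essentially the same route as the paper: both reduce everything to the geometric bound $\|\mathbb D_k\|_{\mathcal L(\mathcal H_{\tau'}^{\odot k},\mathcal H_\tau)}\le C^k$, then combine it with Cauchy estimates for $a$, its one-dimensional compositional inverse $b$, and $c=\log r$ to verify (QH) for $A$ and $B$ and holomorphy for $\rho=\exp\circ C$. The only noteworthy difference is that the paper derives the $\mathbb D_k$-bound concretely---using that the weighted Sobolev spaces $W^{m,2}(\mathbb R^d,\varphi\,dx)$ are Banach algebras for $m\ge d+1$ (a generalisation of \cite[Theorem~7.1]{BT}) together with the Kernel Theorem---rather than deferring it to \cite{FKLO}; and the paper is terser in step~(iii), simply noting that quasi-holomorphy of $\sum_k c_k\mathbb D_k\xi^{\otimes k}$ forces $\rho=\exp[C]$ to be holomorphic, whereas you spell out the role of the integration functional $\ell\in\Phi'$.
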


\begin{proof} We will prove the proposition for $\Phi=\mathcal D$, the proof for $\mathcal S$ being similar. 
Let $T$ denote the set of all pairs $(m,\varphi)$ with $m\in\mathbb N_0$ and $\varphi\in C^\infty(\R^d)$, $\varphi(x)\ge1$ for all $x\in\R^d$. For each $\tau=(m,\varphi)\in T$, we denote by $\mathcal H_{\tau,\R}$ the Sobolev  space $W^{m,2}(\mathbb R^d, \varphi(x)\,dx)$,  and let $\mathcal H_\tau$ be its complexification. We have $\mathcal D=\projlim_{\tau\in T}\mathcal H_\tau$, see e.g.\ \cite[Chapter 14, Subsec.~4.3]{BSU2}.

 A straightforward generalization of \cite[Theorem~7.1]{BT} shows that, for $m\ge d+1$,  $\mathcal H_{\tau}$ is a Banach algebra under the pointwise multiplication of functions, i.e., for any $f,g\in \mathcal H_{\tau}$, we have $fg\in\mathcal H_{\tau}$, and furthermore there exists $C_{14}>0$ such that, for all  $f,g\in \mathcal H_{\tau}$, $\|fg\|_{\tau}\le C_{14}\|f\|_{\tau} \|g\|_{\tau}$.  This, in turn, implies that, for any $f_1,f_2,\dots,f_k\in\mathcal H_\tau$, we have $f_1f_2\dotsm f_k\in\mathcal H_\tau$ and 
 $$\|f_1f_2\dotsm f_k\|_\tau\le C_{14}^{k-1}\|f_1\|_\tau\|f_2\|_\tau\dotsm\|f_k\|_\tau. $$

Let $\tau'\in T$ be such that $\mathcal H_{\tau'}\subset\mathcal H_\tau$ and the embedding operator is of Hilbert--Schmidt class. Then, by the Kernel Theorem and its proof, see \cite[Chapter 1, Theorem 2.3]{BK},  the operator $\mathbb D_k$ given by \eqref{sew46u} extends by continuity to a bounded linear operator from $\mathcal H_{\tau'}^{\odot k}$ into $\mathcal H_\tau$, and furthermore there exists a constant $C_{15}>0$ such that
\begin{equation}\label{rqdtqfdta}
\|\mathbb D_k\|_{\mathcal L(\mathcal H_{\tau'}^{\odot k},\mathcal H_\tau)}\le C_{15}^k,\quad k\in\mathbb N.\end{equation}

 The function $a(u)=\sum_{k=1}^\infty a_ku^k$ is holomorphic on a neighborhood of zero in $\mathbb C$. Hence, there exists a constant $C_{16}>0$ such that $|a_k|\le C_{16}^k$ for all $k\in\mathbb N$. Therefore, by \eqref{rqdtqfdta}, for each $\tau\in T$, there exist $\tau'\in T$ and a constant $C_{17}>0$ such that 
$$\|a_k\mathbb D_k\|_{\mathcal L(\mathcal H_{\tau'}^{\odot k},\mathcal H_\tau)}\le C_{17}^k,\quad k\in\mathbb N.$$
By \eqref{dse5w5}, this implies that the formal power series $A(\xi)$ is quasi-holomorphic on a neighborhood of zero. 

Let $b(u)=\sum_{k=1}^\infty b_ku^k$ be the compositional inverse of $a(u)$. 
Then 
$$B(\xi)=\sum_{k=1}^\infty b_k\mathbb D_k\xi^{\otimes k}=\sum_{k=1}^\infty b_k\xi^k=b(\xi)$$
 is the compositional inverse of $A(\xi)$.
Since $b(u)$ is holomorphic on a neighborhood of zero, we analogously conclude that $B(\xi)$ is quasi-holomorphic on a neighborhood of zero.

Finally, since $r(u)$ is holomorphic on a neighborhood of zero and $r(0)=1$, the function $c(u)$ is holomorphic on a neighborhood of zero. Therefore, the formal power series
$\sum_{k=1}^\infty c_k\mathbb D_k\xi^{\otimes k}=\sum_{k=1}^\infty c_k\xi^k$
is quasi-holomorphic on a neighborhood of zero. By \eqref{e5w5qy3wdd}, this implies that the function $\rho(\xi)=\exp[C(\xi)]$ is holomorphic on a neighborhood of zero. 
\end{proof}

Let us now present some examples of lifted Sheffer sequences that are important for different applications of infinite dimensional analysis, and that satisfy the assumption of Proposition \ref{xresw5yq}. For more details and references, see \cite{FKLO}. 

\begin{example}[{\it Falling and rising factorials on $\Phi'$}] \label{es5w5w3}
The  falling factorials form the sequence of monic polynomials on $\mathbb C$ of binomial type that are explicitly given by
$$s_n(z)=(z)_n:=z(z-1)(z-2)\dotsm (z-n+1).$$ 
The generating function of the falling factorials is
$$G(z,u)=\exp[z\log(1+u)]=(1+u)^z.$$
The corresponding lifted sequence of polynomials  of binomial type on $\Phi'$ has the generating function
$$
\sum_{n=0}^\infty \frac1{n!}\,\langle S^{(n)}(\omega),\xi^{\otimes n}\rangle=\exp\big[\langle\omega,\log(1+\xi)\rangle\big].$$

Similarly,  the rising factorials on $\mathbb C$ form the sequence of monic polynomials on $\mathbb C$ of binomial type that are explicitly given by
$$s_n(z)=(z)^n:=(-1)^n(-z)_n=z(z+1)(z+2)\dotsm (z+n-1),$$ 
and their generating function is 
$$
\sum_{n=0}^\infty\frac{u^n}{n!}s_n(z)=\exp[-z\log(1-u)]=(1-u)^{-z}.$$
The corresponding rising factorials on $\Phi'$ are of binomial type and have the generating function
$$\sum_{n=0}^\infty \frac1{n!}\,\langle S^{(n)}(\omega),\xi^{\otimes n}\rangle=\exp\big[\langle\omega,-\log(1-\xi)\rangle\big].$$
\end{example}

\begin{remark}
In the theory of point processes, the so-called $K$-transform is utilized  to define the correlation measure of a point process, see e.g.\ \cite{KK}. This operator transforms functions defined on the space of finite configurations into functions defined on the space of infinite configurations in $\R^d$. In fact, the $K$-transform can also be understood as the umbral operator $\mathfrak U$ corresponding to the falling factorials on $\Phi'$. Under this interpretation, the statement that the umbral operator $\mathfrak U$ extends to a linear self-homeomorphism of  $\mathcal E^{1}_{\mathrm{min}}(\Phi')$ becomes a stronger result than \cite[Theorem 5.1]{KKO2}.
  
\end{remark}

\begin{example}[{\it Charlier polynomials on $\Phi'$}]
The classical Charlier polynomials on $\mathbb C$ (or rather $\R$) is a Sheffer sequence with the generating function
$$\sum_{n=0}^\infty\frac{u^n}{n!}s_n(z)=\exp [z\log(1+u)-u].$$
Its lifting is the sequence of Charlier polynomials on $\Phi'$ and it has the generating function
$$\sum_{n=0}^\infty \frac1{n!}\,\langle S^{(n)}(\omega),\xi^{\otimes n}\rangle=\exp\bigg[\langle\omega,\log(1+\xi)\rangle-\int_{\R^d}\xi(x)\,dx\bigg].$$
These polynomials play a fundamental role in Poisson analysis as they are orthogonal with respect to Poisson random measure on $\R^d$. 
Note that the falling factorials on $\Phi'$ is the basic sequence for the Charlier polynomials on $\Phi'$.
\end{example}

\begin{example}[{\it Laguerre polynomials on $\Phi'$}]\label{54wu3}
The (monic) Laguerre polynomials on $\mathbb C$ (or rather $\R$), corresponding to a parameter $k\ge-1$, have the generating function 
$$
\sum_{n=0}^\infty\frac{u^n}{n!}s_n(z)=\exp\bigg[\frac{zu}{1+u}\bigg](1+u)^{-(k+1)}=\exp\bigg[\frac{zu}{1+u}-(k+1)\log(1+u)\bigg] .$$ Its lifting gives a sequence of the Laguerre polynomials on $\Phi'$ which has the generating function
$$\sum_{n=0}^\infty \frac1{n!}\,\langle S^{(n)}(\omega),\xi^{\otimes n}\rangle=\exp\bigg[\bigg\langle\omega,\frac{\xi}{1+\xi}\bigg\rangle-(k+1)\int_{\R^d}\log(1+\xi(x))\,dx\bigg].$$
In particular, for $k=-1$, this  sequence is of binomial type. For $k=0$, these polynomials are orthogonal with respect to the gamma random measure on $\R^d$. 
\end{example}

\begin{remark} Note that, in view of Proposition \ref{6ew6u47eies}, for all the Sheffer polynomials on $\mathbb C$ appearing in Examples~\ref{es5w5w3}--\ref{54wu3}, the corresponding Sheffer  operator $\mathfrak S$ cannot be extended to $\mathfrak S\in\mathcal L(\mathcal E^{\alpha}_{\mathrm{min}}(\mathbb C))$ with $\alpha>1$.
\end{remark}

\subsection*{Acknowledgments}
The authors are grateful to the anonymous referee for valuable comments that improved the manuscript. 
MJO and LS were supported by the Portuguese 
national funds through FCT--Funda{\c c}\~ao para a Ci\^encia e a Tecnologia, 
within the project UID/MAT/04561/2019.

\appendix
\renewcommand{\thesection}{A}
\setcounter{equation}{0}
\setcounter{theorem}{0}

\section*{Appendix: Proofs of propositions in Section \ref{vrtew6u3}}

\begin{proof}[Proof of Proposition \ref{vytde6e6}]
 Let $\tau\in T$. Choose $\tau'\in T$ such that $\mathcal H_{\tau'}\subset\mathcal H_{\tau}$ and the embedding operator is of Hilbert--Schmidt class. 
Since the function $f\restriction_{\mathcal H_{-\tau'}}:\mathcal H_{-\tau'}\to\mathbb C$ is entire, it can be represented in the form
$f(\omega)=f(0)+\sum_{n=1}^\infty \psi^{(n)}(\omega,\dots,\omega)$. 
Here, for each $n\in\mathbb N$, $\psi^{(n)}:\mathcal H_{-\tau'}^n\to\mathbb C$ is a symmetric bounded $n$-linear form, see \cite[Section~3.1]{Dineen2}. Obviously, we can identify $\psi^{(1)}$ with $\varphi^{(1)}\in\mathcal H_{\tau'}$ , so that $\psi^{(1)}(\omega)=\langle\omega,\varphi^{(1)}\rangle$ for all $\omega\in\mathcal H_{-\tau'}$. For each $n\ge 2$, by using the Kernel Theorem, see e.g.\ \cite[Chapter 1, Theorem 2.3]{BK}, we conclude that there exists a unique $\varphi^{(n)}\in\mathcal H_{\tau}^{\odot n}$ such that
$$\psi^{(n)}(\omega,\dots,\omega)=\langle \omega^{\otimes n},\varphi^{(n)}\rangle,\quad \omega\in\mathcal H_{-\tau}\subset\mathcal H_{-\tau'}.$$
Since $\tau\in T$ was arbitrary, we have that, for each $n\in\mathbb N$, $\varphi^{(n)}\in\bigcap_{\tau \in T}\mathcal H_\tau^{\odot n}=\Phi^{\odot n}$. From here the statement of the proposition follows. 
\end{proof}

\begin{proof}[Proof of Proposition \ref{6ei650}]
Without loss of generality, we may assume that $\mathcal G$ is the complexification of a real Hilbert space $\mathcal G_{\R}$, and for any $g_1,g_2\in\mathcal G$, we denote $\langle g_1,g_2\rangle:=(g_1,\overline{g_2})_{\mathcal G}$.

 Since the function $F$ is holomorphic, by \cite[Section~3.1]{Dineen2}, there exist  $U'$, an open neighborhood of zero in $\Phi$ that is a subset of $U$, and a symmetric continuous $n$-linear mapping $\Psi^{(n)}:\Phi^n\to \mathcal G$ ($n\in\mathbb N$)  
 such that, for all $\xi\in U'$,  
$F(\xi)=F(0)+\sum_{n=1}^\infty \Psi^{(n)}(\xi,\dots,\xi)$,
where the series converges uniformly on $U'$ in the $\mathcal G$ space. In particular, $\|F(\cdot)\|_{\mathcal G}$ is bounded on $U'$. 
Let $C_{18}:=\sup_{\xi\in U'}\|F(\xi)\|_{\mathcal G}$.

Choose $\tau'\in T$ and $r>0$ such that 
$\{\xi\in\Phi\mid \|\xi\|_{\tau'}< 2r\}\subset U'$.
Fix any
$\xi\in\Phi$ with  $\|\xi\|_{\tau'}=r$ and $g\in\mathcal G$. Consider the holomorphic function
$$\{z\in\mathbb C\mid |z|<2\}\ni z\mapsto  \langle F(z\xi),g\rangle=\langle F(0),g\rangle+\sum_{n=1}^\infty z^n \langle \Psi^{(n)}(\xi,\dots,\xi),g\rangle\in\mathbb C.$$
By applying Cauchy's Integral Formula on the disk $\{z\in\mathbb C: |z|\le 1\}$,  we get 
$$|\langle \Psi^{(n)}(\xi,\dots,\xi),g\rangle|\le C_{18}\|g\|_{\mathcal G},\quad n\in\mathbb N.$$
Therefore, for all $\xi\in\Phi$ and all $g\in\mathcal G$, 
\begin{equation}\label{ytwdfei62r}
|\langle \Psi^{(n)}(\xi,\dots,\xi),g\rangle|\le C_{18}\frac{\|\xi\|_{\tau'}^n}{r^n}\,\|g\|_{\mathcal G}.
\end{equation}
By using a consequence of the polarization formula on a Hilbert space (see  \cite[Proposition~1.44]{Dineen2}), we conclude from \eqref{ytwdfei62r} that, for any $\xi_1,\dots,\xi_n\in\Phi$ and $g\in\mathcal G$,
\begin{equation}
|\langle \Psi^{(n)}(\xi_1,\dots,\xi_n),g\rangle|\le C_{18}\, \frac{1}{r^n }\,\|\xi_1\|_{\tau'}\dotsm \|\xi_n\|_{\tau'}\|g\|_{\mathcal G}.\label{tuyqdwr5i}
\end{equation}
 In particular, formula \eqref{tuyqdwr5i} implies  that $\langle \Psi^{(n)}(\xi_1,\dots,\xi_n),g\rangle$ extends  to  a bounded $(n+1)$-linear form on 
 $\mathcal H_{\tau'}^n\times\mathcal G$ that is symmetric in the first $n$ variables.

Let $\tau \in T$ be such that $\mathcal H_{\tau}\subset\mathcal H_{\tau'}$ and the embedding operator is of Hilbert--Schmidt class. 
By using the Kernel Theorem,  we conclude that, for each $n\in\mathbb N$, there exists $\Theta^{(n)}\in\mathcal H_{-\tau}^{\odot n}\otimes\mathcal G$ such that, for all $\xi_1,\dots,\xi_n\in\Phi$ and $g\in \mathcal G$,
\begin{equation*}
 \langle \Psi^{(n)}(\xi_1,\dots,\xi_n),g\rangle=\langle\Theta^{(n)},\xi_1\odot\dots\odot\xi_n\otimes g\rangle .\end{equation*}
 Furthermore, it follows from \eqref{tuyqdwr5i} and the proof of the Kernel Theorem that, for all $n\in\mathbb N$,
\begin{equation}\label{tf7er58}
\|\Theta^{(n)}\|_{\mathcal H_{-\tau}^{\odot n}\otimes\mathcal G}\le 
C_{18} \frac{c_{\tau,\tau'}^n}{r^n },\end{equation}
where $c_{\tau,\tau'}$ is the Hilbert--Schmidt norm of the operator of embedding of $\mathcal H_\tau$ into $\mathcal H_{\tau'}$. 

For each $\varphi^{(n)}\in\mathcal H_\tau^{\odot n}$, we denote by $\langle \Theta^{(n)},\varphi^{(n)}\rangle$ the element of $\mathcal G$ that satisfies, for all $g\in\mathcal G$, 
\begin{equation}\label{utf7r87o}
\big\langle \langle \Theta^{(n)},\varphi^{(n)}\rangle,  g\big\rangle=\langle \Theta^{(n)},\varphi^{(n)}\otimes   g\rangle.
\end{equation}
Now, for each $n\in\mathbb N$, we define 
$A_n\in \mathcal L(\mathcal H_\tau^{\odot n},\mathcal G)$ by the formula 
$A_n\varphi^{(n)}:=\langle \Theta^{(n)},\varphi^{(n)}\rangle$ for 
$\varphi^{(n)}\in\mathcal H_{\tau}^{\odot n}$.
By \eqref{tf7er58},   and \eqref{utf7r87o},
\begin{equation*}
\|A_n\|_{\mathcal L(\mathcal H_\tau^{\odot n},\mathcal G)} \le C_{18} \frac{c_{\tau,\tau'}^n}{r^n }.\end{equation*}
From here the statement of the proposition follows.
\end{proof}

\begin{proof}[Proof of Theorem \ref{tctfdr6tqde6}] We start with 

\begin{lemma}\label{yqdi5}
Let $\alpha>0$, $\tau\in T$, and $l\in\mathbb N_0$.  Then there exists   $l'\in\mathbb N_0$ such that
$\mathbf E^\alpha_{l'}(\mathcal H_{-\tau})\subset \mathcal E_l^\alpha(\mathcal H_{-\tau})$, the embedding operator is continuous and  for all $f\in \mathbf E^\alpha_{l'}(\mathcal H_{-\tau})$
\begin{equation}\label{vy6e84}
\mathbf n_{\tau,l,\alpha}(f)\le  \|f\|_{\tau,l',\alpha}.
\end{equation}
\end{lemma}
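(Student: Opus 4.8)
The plan is to reduce the lemma to a single elementary pointwise estimate. Indeed, suppose we can find $l'\in\mathbb N_0$ such that
\[
|f(\omega)|\le \|f\|_{\tau,l',\alpha}\,\exp\bigl(2^{-l}\|\omega\|_{-\tau}^\alpha\bigr)\qquad\text{for all }f\in\mathbf E^\alpha_{l'}(\mathcal H_{-\tau}),\ \omega\in\mathcal H_{-\tau}.
\]
Taking the supremum over $\omega\in\mathcal H_{-\tau}$ then shows at once that $\mathbf n_{\tau,l,\alpha}(f)<\infty$, hence $f\in\mathcal E^\alpha_l(\mathcal H_{-\tau})$, that the bound \eqref{vy6e84} holds, and — since \eqref{vy6e84} says precisely that the identity map $\mathbf E^\alpha_{l'}(\mathcal H_{-\tau})\to\mathcal E^\alpha_l(\mathcal H_{-\tau})$ has operator norm at most $1$ — that the embedding is continuous.

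To obtain this estimate, I would first use the representation \eqref{t7red6q5} of $f$ together with $\|\omega^{\otimes n}\|_{-\tau}=\|\omega\|_{-\tau}^n$ to bound
\[
|f(\omega)|\le\sum_{n=0}^\infty\|\omega\|_{-\tau}^n\,\|\varphi^{(n)}\|_\tau
=\sum_{n=0}^\infty\Bigl((n!)^{1/\alpha}2^{l'n}\|\varphi^{(n)}\|_\tau\Bigr)\,\frac{\|\omega\|_{-\tau}^n}{(n!)^{1/\alpha}2^{l'n}}
\le\|f\|_{\tau,l',\alpha}\,\sup_{n\ge0}\frac{\|\omega\|_{-\tau}^n}{(n!)^{1/\alpha}2^{l'n}},
\]
where in the last step I pulled the supremum out of the sum and invoked the definition \eqref{yufwrq76rf} of $\|f\|_{\tau,l',\alpha}$.

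It then remains to control the supremum. Setting $s:=\|\omega\|_{-\tau}^\alpha$ and raising to the power $\alpha$, one needs $s^n/\bigl(n!\,2^{l'\alpha n}\bigr)\le\exp(\alpha 2^{-l}s)$ for every $n\ge0$ and every $s\ge0$. But the left-hand side is a single nonnegative term of the convergent series $\sum_{k\ge0}s^k/(k!\,2^{l'\alpha k})=\exp\bigl(2^{-l'\alpha}s\bigr)$, so it is enough to pick $l'\in\mathbb N_0$ large enough that $2^{-l'\alpha}\le\alpha 2^{-l}$, i.e.\ $l'\ge(l-\log_2\alpha)/\alpha$ (take $l'=0$ when the right-hand side is negative). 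Extracting the $\alpha$-th root gives $\sup_{n\ge0}\|\omega\|_{-\tau}^n/\bigl((n!)^{1/\alpha}2^{l'n}\bigr)\le\exp\bigl(2^{-l}\|\omega\|_{-\tau}^\alpha\bigr)$, which is the required pointwise estimate. There is no serious obstacle here: the lemma amounts to a calibration of the index $l'$ against $l$ and $\alpha$ via the trivial termwise bound for the exponential series, and the only minor point to watch is that the chosen $l'$ be a nonnegative integer, which the computation always permits. (The same estimate, with a slightly larger exponent, also re-confirms that the series \eqref{t7red6q5} converges locally uniformly on $\mathcal H_{-\tau}$, so that $f$ is genuinely entire there, consistently with the definition of $\mathbf E^\alpha_{l'}(\mathcal H_{-\tau})$.)
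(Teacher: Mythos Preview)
Your proof is correct. The paper's argument has the same skeleton---bound $|f(\omega)|$ by the absolute series, split each term into a weighted coefficient factor and a remainder factor, arrive at $|f(\omega)|\le\|f\|_{\tau,l',\alpha}\exp\bigl(\|\omega\|_{-\tau}^\alpha/(\alpha\,2^{l'\alpha})\bigr)$, and then calibrate $l'$ so that $\alpha\,2^{l'\alpha}\ge 2^l$---but it reaches the intermediate bound via H\"older's inequality (with auxiliary exponents $p,q$ satisfying $q\ge\alpha$) followed by the embedding $\ell^1\hookrightarrow\ell^p$ on each factor. Your shortcut $\sum a_nb_n\le(\sum a_n)\sup_n b_n$, together with the observation that each $s^n/(n!\,2^{l'\alpha n})$ is a single nonnegative term of the series for $\exp(2^{-l'\alpha}s)$, accomplishes the same thing more directly and spares you the artificial $p,q$. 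Both routes yield the identical exponential estimate and the identical condition $2^{-l'\alpha}\le\alpha\,2^{-l}$ on $l'$.
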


\begin{proof} Let $f(\omega)$ be of the form \eqref{t7red6q5}.  Let $r\in\mathbb N$ and $p,q\in(1,\infty)$ with $\frac1p+\frac1q=1$. Furthermore, we assume $ q\ge\alpha$. 
By, using the H\"older inequality, we get 
\begin{align}
|f(\omega)|&\le\sum_{n=0}^\infty\|\omega\|_{-\tau}^n\|\varphi^{(n)}\|_\tau \frac{2^{rn}(n!)^{1/\alpha}}{2^{rn}(n!)^{1/\alpha}}\notag\\
&\le \left(\sum_{n=0}^\infty \|\varphi^{(n)}\|_\tau^p 2^{rnp}(n!)^{p/\alpha} \right)^{1/p} \bigg(\sum_{n=0}^\infty\left(
\|\omega\|_{-\tau}^{n\alpha}\frac1{2^{rn\alpha}}\frac1{n!}
\right)^{q/\alpha}\bigg)^{1/q}\notag\\
&\le \bigg(\sum_{n=0}^\infty \|\varphi^{(n)}\|_\tau 2^{rn}(n!)^{1/\alpha} \bigg) \bigg(\sum_{n=0}^\infty
\|\omega\|_{-\tau}^{n\alpha}\frac1{2^{rn\alpha}}\frac1{n!}
\bigg)^{1/\alpha}\notag\\
&= \|f\|_{\tau,r,\alpha}\exp\left[\frac1{\alpha 2^{r\alpha}}\,\|\omega\|_{-\tau}^\alpha\right]. \label{s5w7} 
\end{align}
For the given $\alpha>0$ and $l\in\mathbb N_0$, choose $r\in\mathbb N$ such that $2^l\le \alpha 2^{r\alpha}$. Let $l':=r$. Then, by \eqref{d6e7i4} and \eqref{s5w7}, formula \eqref{vy6e84} holds.\end{proof}

\begin{lemma}\label{rte684as}
Let $\alpha>0$, $\tau\in T$, and $l\in\mathbb N_0$. Let $\tau'\in T$ be such that the operator of embedding of $\mathcal H_{\tau'}$ into $\mathcal H_\tau$ is of Hilbert--Schmidt class. Then there exists $l'\in\mathbb N_0$ such that
$\mathcal E^\alpha_{l'}(\mathcal H_{-\tau'})\subset \mathbf E^\alpha_l(\mathcal H_{-\tau})$,
and the embedding operator is continuous, i.e., there exists $C_{19}>0$ such that, for all $f\in \mathcal E^\alpha_{l'}(\mathcal H_{-\tau'})$,    
\begin{equation}\label{yqefd6urd}
\|f\|_{\tau,l,\alpha}\le C_{19}\, \mathbf n_{\tau',l',\alpha}(f).\end{equation}
\end{lemma}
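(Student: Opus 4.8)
The plan is to mimic the proof of Proposition~\ref{6ei650}: estimate the Taylor coefficients of $f$ along complex lines through the origin by a Cauchy estimate, optimize the radius of the circle, and then pass to symmetric tensor kernels via the Kernel Theorem. Fix $l'\in\mathbb N_0$, to be chosen at the end, and put $M:=\mathbf n_{\tau',l',\alpha}(f)$, so that $|f(\omega)|\le M\exp(2^{-l'}\|\omega\|_{-\tau'}^\alpha)$ for all $\omega\in\mathcal H_{-\tau'}$. Since $f$ is entire on $\mathcal H_{-\tau'}$, by \cite[Section~3.1]{Dineen2} we may write $f(\omega)=f(0)+\sum_{n=1}^\infty\psi^{(n)}(\omega,\dots,\omega)$, where each $\psi^{(n)}$ is a symmetric continuous $n$-linear form on $\mathcal H_{-\tau'}$.

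First I would estimate the forms $\psi^{(n)}$. For fixed $\omega\ne0$, Cauchy's integral formula applied to the entire function $z\mapsto f(z\omega)=f(0)+\sum_n z^n\psi^{(n)}(\omega,\dots,\omega)$ on the circle $\{|z|=R\}$ gives $|\psi^{(n)}(\omega,\dots,\omega)|\le R^{-n}M\exp(2^{-l'}R^\alpha\|\omega\|_{-\tau'}^\alpha)$ for every $R>0$. Minimizing the right-hand side over $R$ (the minimum being attained at $R^\alpha=n\,2^{l'}/(\alpha\|\omega\|_{-\tau'}^\alpha)$) and using $n!\le n^n$ then yields
\[|\psi^{(n)}(\omega,\dots,\omega)|\le M\,C^{n}(n!)^{-1/\alpha}\|\omega\|_{-\tau'}^n,\qquad C:=(2^{-l'}\alpha e)^{1/\alpha},\]
and $C$ can be made arbitrarily small by taking $l'$ large. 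By the polarization formula on a Hilbert space \cite[Proposition~1.44]{Dineen2}, the same bound holds for $\psi^{(n)}(\xi_1,\dots,\xi_n)$ with $\|\omega\|_{-\tau'}^n$ replaced by $\|\xi_1\|_{-\tau'}\dotsm\|\xi_n\|_{-\tau'}$; in particular $\psi^{(n)}$ is a bounded $n$-linear form on $\mathcal H_{-\tau'}^n$ of norm at most $M\,C^{n}(n!)^{-1/\alpha}$.

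Next, exactly as in the proof of Proposition~\ref{6ei650}, I would invoke the Kernel Theorem together with its proof \cite[Chapter~1, Theorem~2.3]{BK}: since the embedding $\mathcal H_{\tau'}\hookrightarrow\mathcal H_\tau$ is of Hilbert--Schmidt class, with Hilbert--Schmidt norm $c_{\tau,\tau'}$, for every $n$ there is a unique $\varphi^{(n)}\in\mathcal H_\tau^{\odot n}$ with $\psi^{(n)}(\omega,\dots,\omega)=\langle\omega^{\otimes n},\varphi^{(n)}\rangle$ for all $\omega\in\mathcal H_{-\tau}\subset\mathcal H_{-\tau'}$, and $\|\varphi^{(n)}\|_\tau\le c_{\tau,\tau'}^{n}\,M\,C^{n}(n!)^{-1/\alpha}$. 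Restricting $f$ to $\mathcal H_{-\tau}$ shows that $f\restriction_{\mathcal H_{-\tau}}$ has the form \eqref{t7red6q5} with these kernels, whence
\[\|f\|_{\tau,l,\alpha}=\sum_{n=0}^\infty(n!)^{1/\alpha}2^{ln}\|\varphi^{(n)}\|_\tau\le M\sum_{n=0}^\infty\bigl(2^l c_{\tau,\tau'}C\bigr)^n.\]
Finally I would choose $l'\in\mathbb N_0$ so large that $q:=2^l c_{\tau,\tau'}C=2^l c_{\tau,\tau'}(2^{-l'}\alpha e)^{1/\alpha}<1$; then the series converges and $\|f\|_{\tau,l,\alpha}\le(1-q)^{-1}M$, which is \eqref{yqefd6urd} with $C_{19}:=(1-q)^{-1}$, and this also gives the continuity of the inclusion $\mathcal E^\alpha_{l'}(\mathcal H_{-\tau'})\subset\mathbf E^\alpha_l(\mathcal H_{-\tau})$.

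I expect the \emph{main obstacle} to be this last step, namely keeping precise track of the $n$-dependence of the constant relating the norm of $\psi^{(n)}$ as a multilinear form on $\mathcal H_{-\tau'}^n$ (equivalently, as a continuous functional on the projective $n$-fold tensor power of $\mathcal H_{-\tau'}$) to the Hilbert-space tensor norm $\|\varphi^{(n)}\|_\tau$ of its kernel; it is precisely here that the Hilbert--Schmidt hypothesis on $\mathcal H_{\tau'}\hookrightarrow\mathcal H_\tau$ is essential, as it produces the factor $c_{\tau,\tau'}^{n}$. By comparison, the radius optimization and the elementary inequality $n!\le n^n$ are routine, as is the case $n=0$, where $\varphi^{(0)}=f(0)$ and $|f(0)|\le M$.
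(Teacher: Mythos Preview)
Your proposal is correct and follows essentially the same route as the paper's proof: Cauchy estimates along complex lines, the choice $\rho_n=(2^{l'}n/\alpha)^{1/\alpha}$ together with $n!\le n^n$, polarization on the Hilbert space $\mathcal H_{-\tau'}$, and then the Kernel Theorem with its quantitative constant $c_{\tau,\tau'}^{n}$ coming from the Hilbert--Schmidt embedding. The only cosmetic difference is that the paper applies polarization and the Kernel Theorem before optimizing in the radius, whereas you optimize first; the resulting bound on $\|\varphi^{(n)}\|_\tau$ and the final geometric-series estimate are identical.
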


\begin{remark}
Since $\mathcal H_{\tau'}\subset \mathcal H_\tau$, we get $\mathcal H_{-\tau}\subset\mathcal H_{-\tau'}$. So Lemma~\ref{rte684as} states, in particular, that  each function from $\mathcal E^\alpha_{l'}(\mathcal H_{-\tau'})$ restricted to $\mathcal H_{-\tau}$ belongs to $\mathbf E^\alpha_l(\mathcal H_{-\tau})$. 
\end{remark}

\begin{proof}[Proof of Lemma \ref{rte684as}]  The proof is partially similar to the proof of Proposition~\ref{6ei650}.
Let $f:\mathcal H_{-\tau'}\to\mathbb C$ be an entire function and let  $f$ be of the form \eqref{t7red6q5}. For a fixed $\omega\in\mathcal H_{-\tau'}$, $\|\omega\|_{-\tau'}=1$, consider the entire function
$\mathbb C\ni z\mapsto f(z\omega)=\sum_{n=0}^\infty z^n \langle\omega^{\otimes n},\varphi^{(n)}\rangle$. By applying Cauchy's Integral Formula on the disk $\{z\in\mathbb C: |z|\le\rho_n\}$ with $\rho_n>0$,
 and using \eqref{d6e7i4}, we obtain 
$$ |\langle\omega^{\otimes n},\varphi^{(n)}\rangle|\le 
\frac{1}{\rho^n_n}\,\mathbf n_{\tau',l',\alpha}(f)\exp(2^{-l'}\rho_n^\alpha),\quad l'\in\mathbb N_0.$$
Hence, for all $\omega\in \mathcal H_{-\tau'}$,
\begin{equation*}
|\langle\omega^{\otimes n},\varphi^{(n)}\rangle|\le 
\frac{\|\omega\|^n_{-\tau'}}{\rho^n_n}\,\mathbf n_{\tau',l',\alpha}(f)\exp(2^{-l'}\rho_n^\alpha),\quad l'\in\mathbb N_0.
\end{equation*}
By using a consequence of the polarization formula on a Hilbert space and the Kernel Theorem, we then get
\begin{equation}\label{rts5esw}
\|\varphi^{(n)}\|_{\tau} \le \frac{c_{\tau',\tau}^n}{\rho^n_n}\,\mathbf n_{\tau',l',\alpha}(f)\exp(2^{-l'}\rho_n^\alpha),\quad l'\in\mathbb N_0.
\end{equation}
Choosing $\rho_n=\left(2^{l'}n/\alpha\right)^{1/\alpha}$ and using the estimate $n!\le n^n$, we get from \eqref{rts5esw}
\begin{align*}
\|\varphi^{(n)}\|_{\tau} &\le \frac1{(n!)^{1/\alpha}}\left(\frac{(\alpha e)^{1/\alpha}c_{\tau',\tau}}{2^{l'/\alpha}}\right)^n\mathbf n_{\tau',l',\alpha}(f).
\end{align*}
From here we easily conclude that inequality \eqref{yqefd6urd} holds for a sufficiently large $l'$.
\end{proof}

Finally, the statement of Theorem~\ref{tctfdr6tqde6} follows from Lemmas~\ref{yqdi5} and \ref{rte684as}.
\end{proof}

\end{document}